
\documentclass[journal]{IEEEtran}  
\usepackage{amsmath}
\usepackage{amssymb}
\usepackage{amsthm}
\usepackage{enumitem}
\usepackage{multicol}

\usepackage{graphics} 
\usepackage{epsfig} 
\usepackage{epstopdf}
\usepackage{mathptmx} 
\usepackage{times} 
\usepackage{amsmath} 
\usepackage{amssymb}  
\usepackage{subfigure}
\usepackage[utf8]{inputenc}
\usepackage[english]{babel}
\usepackage{fancyhdr}
\usepackage{dsfont}

\usepackage{color}

\newtheorem{thm}{Theorem}
\newtheorem{defn}{Definition}
\newtheorem{prop}{Proposition}
\newtheorem{lem}{Lemma}

\newtheorem{cor}{Corollary}

\newtheorem{ex}{Example}

\newcommand{\mcl}[1]{\mathcal{#1}}

\DeclareMathOperator*{\esssup}{ess\,sup}

%
%
%
%
%
%

\newcommand{\R}{\mathbb{R}}

\newcommand{\N}{\mathbb{N}}

\newcommand{\eps}{\varepsilon}

\title{\LARGE \bf
Converse Lyapunov Functions
and Converging Inner Approximations
to Maximal Regions of Attraction
of Nonlinear Systems
}

\author{Morgan Jones%
	\thanks{M. Jones is with the School for the Engineering of Matter, Transport and Energy, Arizona State University, Tempe, AZ, 85298 USA. e-mail: {\tt \small morgan.c.jones@asu.edu } },
	Matthew M. Peet
	\thanks{M. Peet is with the School for the Engineering of Matter, Transport and Energy, Arizona State University, Tempe, AZ, 85298 USA. e-mail: {\tt \small mpeet@asu.edu } }
}


\begin{document}

\maketitle
\thispagestyle{plain}
\pagestyle{plain}

\begin{abstract}
	This paper considers the problem of approximating the ``maximal'' region of attraction (the set that contains all asymptotically stable sets) of any given set of locally exponentially stable nonlinear Ordinary Differential Equations (ODEs) with a sufficiently smooth vector field. Given a locally exponential stable ODE with a differentiable vector field, we show that there exists a globally Lipschitz continuous converse Lyapunov function whose $1$-sublevel set is equal to the maximal region of attraction of the ODE.
We then propose a sequence of $d$-degree Sum-of-Squares (SOS) programming problems that yields a sequence of polynomials that converges to our proposed converse Lyapunov function uniformly from above in the $L^1$ norm.
We show that each member of the sequence of $1$-sublevel sets of the polynomial solutions to our proposed sequence of SOS programming problems are certifiably contained inside the maximal region of attraction of the ODE, and moreover, we show that this sequence of sublevel sets converges to the maximal region of attraction of the ODE with respect to the volume metric.
We provide numerical examples of estimations of the maximal region of attraction for the Van der Pol oscillator and a three dimensional servomechanism.
\end{abstract}
\vspace{-0.6cm}
\section{Introduction}
For a given equilibrium point, a Region of Attraction (ROA) of a nonlinear Ordinary Differential Equation (ODE) is defined as a set of initial conditions for which the solution map of the ODE tends to that equilibrium point.
The \textit{maximal} ROA of an equilibrium point, meanwhile, is defined as the ROA which contains all other ROAs of that equilibrium point.
Specifically, for an ODE $\dot{x}(t)=f(x(t))$, we denote the solution map (known to exist when $f$ is Lipschitz continuous) of the ODE by $\phi_f: \R^n \times \R \to \R^n$ which satisfies
\vspace{-0.2cm}
\begin{align*}
\frac{d}{dt} \phi_f(x,t) & = f(\phi_f(x,t)) \text{ for all } x \in \R^n \text{ and } t \ge 0, \\
\phi_f(x,0) & =x \text{ for all } x \in \R^n,
\end{align*}
where $f:\R^n \to \R^n$ is such that $f(0)=0$. The maximal ROA is then defined as
\vspace{-0.2cm}
\begin{align*}
ROA_f:=\{x \in \R^n: \lim_{t \to \infty} ||\phi_f(x,t)||_2=0\}.
\end{align*}

The problem of computing sets which accurately approximate the maximal ROA with respect to some set metric plays a central role in the stability analysis of many engineering applications. For instance, knowledge of the ROA provides a metric for the susceptibility of the F/A-18 Hornet aircraft experiencing an unsafe out of control flight departure phenomena, called falling leaf mode~\cite{Chakraborty2011335,chakraborty2011susceptibility}.

If the matrix $A \in \R^{n \times n}$ is Hurwitz (the real part of the eigenvalues of $A$ are all negative) then the associated linear system, with vector field $f(x)=Ax$, has a maximal ROA that can be found exactly as $ROA_f=\R^n$. In the more general case of nonlinear systems there is no known general analytical formula for $ROA_f$. However, for particular nonlinear systems, such as those arising from gradient flow dynamics, the maximal ROA can be expressed analytically~\cite{mohammadi2018stability}. In the absence of an analytical formula for $ROA_f$ in recent years there has been considerable interest in discovering numerical methods for approximating $ROA_f$ rather than finding $ROA_f$ exactly.

Lyapunov`s second method is arguably the most widely used technique for finding ROAs associated with an ODE~\cite{kellett2015classical}. Rather than solving the ODE directly to find a closed form expression of the solution map, ROAs can be computed indirectly by searching for a ``generalized energy function", called  a Lyapunov function. A Lyapunov function of an ODE is any function that is positive everywhere, apart from the origin where it is zero, and is strictly decreasing along the solution map of the ODE. Specifically, if we can find a function $V$ such that $V(0)=0$ and $V(x)> 0$ for all $ x \ne 0$, then if $\nabla{V}(x)^Tf(x)$ is negative over the sublevel set $\{x \in \R^n :V(x) \le a \}$ we have that $\{x \in \R^n :V(x) \le a \} \subseteq ROA_f$ is a ROA~\cite{Hahn19671}. For linear systems, $f(x)=Ax$ where $A \in \R^{n \times n}$, a necessary and sufficient condition for $ROA_f=\R^n$ is that there exists a quadratic Lyapunov function of form $V(x)=x^T P x$ where $P>0$. Thus, in this case, the problem of finding the maximal ROA of a linear system is reduced to solving the Linear Matrix Inequality (LMI) $A^TP+PA<0$ for $P>0$.

In the case of nonlinear systems a common approach for finding Lyapunov functions has been to generalize the search from quadratic functions, $V(x)=x^T P x$, to Sum-of-Square (SOS) polynomials functions, $V(x)=Z_d(x)^T P Z_d(x)$ where $Z_d$ is the degree $d \in \N$ monomial vector. Then, to find a Lyapunov function we must solve an SOS optimization problem, rather than solving an LMI (as was the case for linear systems). Over the years, many SOS optimization problems have been proposed for ROA estimation~\cite{4471858,zheng2018computing,anderson2015advances,colbert2018using}. Recently in~\cite{cunis2020sum}, SOS was used to estimate the region of attraction of an uncrewed aircraft; in~\cite{valmorbida2017region} an SOS based algorithm was proposed to construct a rational Lyapunov function that yields an estimate of the ROA; in~\cite{awrejcewicz2021estimating} a recursive procedure for constructing the polynomial Lyapunov functions was proposed.


Despite the recent success of modern attempts to find accurate approximations of the maximal ROA, to the best of our knowledge, a numerical algorithm that can be proven to provide an approximation of the maximal ROA arbitrarily well with respect to any set metric has yet to be proposed. Many of the current numerical methods for finding ROAs use SOS programming to find polynomial Lyapunov functions.
However, barring any assumptions on the existence of a sufficiently smooth Lyapunov function, it is currently unknown how well polynomial functions can approximate the maximal ROA of a given nonlinear ODE. Concerningly, several counter examples~\cite{ahmadi2011globally,ahmadi2018globally} show that there exist globally asymptotically stable systems ($ROA_f= \R^n$) with polynomial vector fields, but for which there does not exist any associated polynomial Lyapunov function that can certify global asymptotic stability (not even locally in the case of~\cite{ahmadi2018globally}). On the other hand, for systems that are locally exponentially stable it has been shown in~\cite{peet2009exponentially} that there always exists a polynomial Lyapunov function that can certify local exponential stability. This result has been extended in~\cite{leth2017existence} to show that there always exist polynomial Lyapunov functions that can certify a system is locally rationally stable (a weaker form of stability than exponential stability) under the assumption that there exists a smooth Lyapunov function (that need not be polynomial). Furthermore, for systems with homogeneous vector fields it has been shown in~\cite{ahmadi2019algebraic} that there always exists a rational Lyapunov function that is the solution to some SOS problem.

For work that is concerned with using SOS to approximate the maximal ROA of locally exponentially stable systems we mention~\cite{Jones_ROA}. It was shown in~\cite{Jones_ROA} that under the assumption that there exists a sufficiently smooth Lyapunov function, there exists a polynomial Lyapunov function that yields a sublevel set that approximates $ROA_f$ arbitrarily well with respect to the Hausdorff metric. We note that the conservatism of the assumption that there exists a sufficiently smooth Lyapunov function is currently unknown. Moreover, the proposed algorithm for approximating the maximal ROA found in~\cite{Jones_ROA} is only conjectured to yield an arbitrarily close approximation of the maximal ROA but has yet to be proven.

The goal of this paper is to design an algorithm that approximates the maximal ROA of a given locally exponentially stable ODE arbitrarily well. In order to achieve this goal we propose a new converse Lyapunov function (given in Eqn.~\eqref{fun: ZLF}) whose $1$-sublevel set is equal to $ROA_f$. Our proposed converse Lyapunov function is shown to be sufficiently smooth - meaning it can be approximated by a polynomial. After proposing such a converse Lyapunov function, we are then able to design a sequence of SOS Optimization Problems~\eqref{opt: SOS for ROA} and prove that this sequence yields a sequence of polynomials that converges to our proposed converse Lyapunov function uniformly from above in the $L^1$ norm. Finally, we show that since this sequence of polynomials converges to our proposed converse Lyapunov function in the $L^1$ norm from above, their associated sequence of $1$-sublevel sets must also converge in the volume metric to the $1$-sublevel set of our proposed converse Lyapunov function (which is equal to the maximal region of attraction of the ODE). Therefore, for a given locally exponentially stable ODE, the goal of this paper is: 1) To establish the existence of a globally Lipschitz continuous converse Lyapunov function whose $1$-sublevel set is equal to $ROA_f$. 2) To propose the first numerical algorithm that can approximate the maximal ROA arbitrarily well with respect to some set metric. Furthermore, our numerical algorithm yields an inner approximation of $ROA_f$ (that is solution maps initialized inside our approximation of $ROA_f$ asymptotically coverage to the origin); a useful property for the safety analysis of dynamical systems.


The rest of this paper is organized as follows. In Section~\ref{sec: ROA solution maps} we define the maximal region of attraction of an ODE in terms of the solution map of the ODE. In Section~\ref{sec: estimate ROA} we formulate the problem of approximating the region of attraction as an optimization problem. In Section~\ref{sec: A globally lip cts converse LF} we propose a globally Lipschitz continuous Lyapunov function that characterizes the maximal region of attraction. In Section~\ref{sec: bounding LF from above} we propose a convex optimization problem for the approximation of our proposed converse Lyapunov function in the $L^1$-norm. In Section~\ref{sec: SOS opt} we tighten this optimization problem to an SOS programming problem. Finally, several numerical examples are given in Section~\ref{sec: numerical examples} and our conclusion is given in Section~\ref{sec: conclusion}.

\vspace{-0.3cm}
\section{Notation} \label{sec: notation}
\vspace{-0.3cm}
\subsection{Set Notation}
We denote the power set of $\R^n$, the set of all subsets of $\R^n$, as $P(\R^n)=\{X:X\subset \R^n \}$. For two sets $A,B \in \R^n$ we denote $A/B= \{x \in A: x \notin B\}$. For $x \in \R^n$ we denote $||x||_p= \left( \sum_{i =1}^n x_i^p \right)^{\frac{1}{p}}$. For $\eta>0$ and $y \in \R^n$ we denote the set $B_\eta(y)= \{x\in \R^n : ||x-y||_2< \eta\}$. For a set $X \subset \R^n$ we say $x \in X$ is an interior point of $X$ if there exists $\eps>0$ such that $\{y \in \R^n: ||x-y||< \eps\}\subset X$. We denote the set of all interior points of $X$ by $X^\circ$. The point $x \in X$ is a limit point of $X$ if for all $ \eps>0$ there exists $ z \in \{y \in \R^n / \{x\}: ||x-y||<\eps\}$ such that $z \in X$; we denote the set of all limit points of $X$, called the closure of $X$, as $(X)^{cl}$. Moreover, we denote the boundary of $X$ by $\partial X= (X)^{cl} / X^\circ$. For $A \subset \R^n$ we denote the indicator function by $\mathds{1}_A : \R^n \to \R$ that is defined as $\mathds{1}_A(x) = \begin{cases}
& 1 \text{ if } x \in A\\
& 0 \text{ otherwise.}
\end{cases}$ For $B \subseteq \R^n$,  $\mu(B):=\int_{\R^n} \mathds{1}_B (x) dx$ is the Lebesgue measure of $B$. Let us denote bounded subsets of $\R^n$ by $\mcl B:=\{B \subset \R^n: \mu(B)<\infty\}$.  If $M$ is a subspace of a vector space $X$ we denote equivalence relation $\sim_M$ for $x,y \in X$ by $x \sim_M y$ if $x-y \in M$. We denote quotient space by $X \pmod M:=\{ \{y \in X: y \sim_M x \}: x \in X\}$. For an open set $\Omega \subset \R^n$ and $\sigma>0$ we denote $<\Omega>_\sigma:=\{x \in \Omega: B(x,\sigma) \subset \Omega \}$.
\vspace{-0.5cm}
\subsection{Continuity Notation}
 Let $C(\Omega, \Theta)$ be the set of continuous functions with domain $\Omega \subset \R^n$ and image $\Theta \subset \R^m$. We denote the set of locally and uniformly Lipschitz continuous functions on $\Theta_1 \text{ and }\Theta_2$, Defn.~\ref{defn:lip cts}, by $LocLip(\Theta_1,\Theta_2)$ and $Lip(\Theta_1,\Theta_2)$ respectively. For $\alpha \in \N^n$ we denote the partial derivative $D^\alpha f(x):= \Pi_{i=1}^{n} \frac{\partial^{\alpha_i} f}{\partial x_i^{\alpha_i}} (x)$ where by convention if $\alpha=[0,..,0]^T$ we denote $D^\alpha f(x):=f(x)$. We denote the set of $i$'th continuously differentiable functions by $C^i(\Omega,\Theta):=\{f \in C(\Omega,\Theta): D^\alpha f \in C(\Omega, \Theta) \text{ } \text{ for all } \alpha \in \N^n \text{ such that } \sum_{j=1}^{n} \alpha_j \le i\}$. For $V \in C^1(\R^n, \R)$ we denote $\nabla V:= (\frac{\partial V}{\partial x_1},....,\frac{\partial V}{\partial x_n})^T$. We denote the essential supremum by $\esssup_{x \in X}f(x):=\inf\{a \in \R: \mu(\{x \in X: f(x)>a\})= 0\}$.
 \vspace{-0.5cm}
\subsection{Sobolev Space Notation}
For an open set $\Omega \subset \R^n$ and $p \in [1,\infty)$ we denote the set of $p$-integrable functions by $L^p(\Omega,\R):=\{f:\Omega \to \R \text{ measurable }: \int_{\Omega}|f|^p<\infty    \}$, in the case $p = \infty$ we denote $L^\infty(\Omega, \R):=\{f:\Omega \to \R \text{ measurable }: \esssup_{x \in \Omega}|f(x)| < \infty \}$. For $k \in \N$ and $1 \le p \le \infty$ we denote the Sobolev space of functions with weak derivatives (Defn.~\ref{def: weak deriv}) by $W^{k,p}(\Omega,\R):=\{u\in L^p(\Omega,\R): D^\alpha u \in L^p(\Omega,\R) \text{ for all } |\alpha| \le k \}$. For $u \in W^{k,p}(\Omega,\R)$ we denote the Sobolev norm $||u||_{W^{k,p}(\Omega, \R)}:= \begin{cases}
\left( \sum_{|\alpha| \le k} \int_\Omega (D^\alpha u(x))^p dx \right)^{\frac{1}{p}} \text{ if } 1 \le p < \infty\\
\sum_{|\alpha| \le k} \esssup_{ x \in \Omega  } \{|D^\alpha u(x) |\} \text{ if } p= \infty.
\end{cases}$ In the case $k=0$ we have $W^{0,p}(\Omega,\R)=L^p(\Omega,\R)$ and thus we use the notation $|| \cdot ||_{L^p(\Omega,\R)} :=|| \cdot ||_{W^{0,p}(\Omega,\R)} $. The $\sigma$-mollification of a function $V \in L^1(\Omega, \R)$ is denoted by $[V]_{\sigma}:<\Omega>_\sigma \to \R$ and defined in Eqn.~\eqref{eqn: mollification}.
\vspace{-0.5cm}
\subsection{Polynomial Notation}
We denote the space of polynomials $p: \Omega \to \Theta$ by $\mcl P(\Omega,\Theta)$ and polynomials with degree at most $d \in \N$ by $\mcl{P}_d(\Omega,\Theta)$. We say $p \in \mcl{P}_{2d}(\R^n,\R)$ is Sum-of-Squares (SOS) if for $k \in \{1,...k\}\subset \N$ there exists $p_i \in \mcl{P}_{d}(\R^n,\R)$ such that $p(x) = \sum_{i=1}^{k} (p_i(x))^2$. We denote $\sum_{SOS}^d$ to be the set of SOS polynomials of at most degree $d \in \N$ and the set of all SOS polynomials as $\sum_{SOS}$. We denote $Z_d: \R^n \times \R \to \R^{\mcl N_d}$ as the vector of monomials of degree $d \in \N$ or less, where $\mcl N_d:= {d+n \choose d}$.

\section{Regions of Attraction are Defined Using Solution Maps of Nonlinear ODEs} \label{sec: ROA solution maps}
Consider a nonlinear Ordinary Differential Equation (ODE) of the form
\begin{equation} \label{eqn: ODE}
\dot{x}(t) = f(x(t)), \quad x(0)=x_0\in \R^n, \quad t \in [0,\infty),
\end{equation}
where $f: \R^n \to \R^n$ is the vector field and $x_0 \in \R^n$ is the initial condition. Note that, throughout this paper we will assume $f(0)=0$ so the origin is an equilibrium point.

Given $D \subset \R^n$, $I \subset [0, \infty)$, and an ODE~\eqref{eqn: ODE} we say any function $\phi_f :D \times I \to \R^n$ satisfying
\begin{align} \label{soln map property}
&\frac{\partial \phi_f(x,t)}{\partial t}= f(\phi_f(x,t)) \text{ for } (x,t) \in D \times I,\\ \nonumber
& \phi_f(x,0)=x \text{ for } x \in D,\\ \nonumber
& \phi_f(\phi_f(x,t),s)= \phi_f(x,t+s) \text{ for } x \in D \text{ } t,s \in I \text{ with } t+s \in I,
\end{align}
is a solution map of the ODE~\eqref{eqn: ODE} over $D \times I$. For simplicity throughout the paper we will assume there exists a unique solution map to the ODE~\eqref{eqn: ODE} over all $(x,t) \in \R^n \times [0,\infty)$ (uniqueness and existence of a solution map sufficient for the purposes of this paper, such as for initial conditions inside some invariant set, like the Region of Attraction~\eqref{eqn: ROA}, and for all $t \ge 0$, can be shown to hold under minor smoothness assumption on $f$, see~\cite{Khalil_1996}).

We now use the solution map of the ODE~\eqref{eqn: ODE} to define notions of stability.
\begin{defn} \label{defn: asym and exp stab}
	We say the set $U \subset \R^n$ is an asymptotically stable set of the ODE~\eqref{eqn: ODE} if:
	\begin{enumerate}
		\item $U$ contains a neighborhood of the origin.
		\item For any $x \in U$ we have that $\phi_f(x,t) \in U$ for all $t \in [0,\infty)$ and $\lim_{t \to \infty} \phi_f(x,t)=0$.
	\end{enumerate}
	Furthermore, if there also exists $\delta, \mu>0$ such that for any $x \in U$ we have that
	\begin{align} \label{eqn: exp stab}
	||\phi_f(x,t)||_2 \le \mu e^{-\delta t} ||x||_2 \text{ for all } t \ge 0,
	\end{align}
	then we say $U\subset \R^n$ is an exponentially stable set of the ODE~\eqref{eqn: ODE}.
\end{defn}
\begin{defn} \label{defn: ROA}
	The (Maximal) Region of Attraction (ROA) of the ODE~\eqref{eqn: ODE} is defined as the following set:
	\begin{align} \label{eqn: ROA}
	ROA_f:= \{x \in \R^n: \lim_{t \to \infty} ||\phi_f(x,t)||_2=0\}.
	\end{align}
\end{defn}
The ROA of the ODE~\eqref{eqn: ODE} can be thought of as the ``maximal" asymptotically stable set. That is if $U \subset \R^n$ is an asymptotically stable set of the ODE~\eqref{eqn: ODE} then $U \subseteq ROA_f$. Moreover, as we will show next, the ROA is an open set.
\begin{lem}[Lemma 8.1 \cite{Khalil_1996} ] \label{lem: ROA open}
	Consider an ODE of Form~\eqref{eqn: ODE}. The set $ROA_f$ (Defined in Eqn.~\eqref{eqn: ROA}) is open.
\end{lem}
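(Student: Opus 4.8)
The plan is to show that every point of $ROA_f$ is an interior point, by combining three ingredients: the existence of a small attracting neighborhood of the origin, continuous dependence of the solution map on its initial condition over a finite time horizon, and the flow-invariance of $ROA_f$ supplied by the semigroup property in Eqn.~\eqref{soln map property}.

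First I would use local stability of the origin to fix a radius $\eta>0$ with $B_\eta(0) \subseteq ROA_f$. Under the local exponential stability setting of the paper this is immediate: for $||x||_2$ sufficiently small the bound~\eqref{eqn: exp stab} gives $||\phi_f(x,t)||_2 \le \mu e^{-\delta t}||x||_2 \to 0$, so the entire ball lies in $ROA_f$. Next, given an arbitrary $x_0 \in ROA_f$, the defining property $\lim_{t \to \infty}||\phi_f(x_0,t)||_2 = 0$ furnishes a finite time $T \ge 0$ with $\phi_f(x_0,T) \in B_{\eta/2}(0)$.

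The crux is then to transport initial conditions near $x_0$ into this attracting neighborhood. I would invoke continuous dependence on initial conditions: since $f$ is (locally) Lipschitz, the map $y \mapsto \phi_f(y,T)$ is continuous (a Gr\"onwall-type estimate, cf.~\cite{Khalil_1996}), so there exists $\rho>0$ such that $y \in B_\rho(x_0)$ implies $||\phi_f(y,T)-\phi_f(x_0,T)||_2 < \eta/2$, and hence $\phi_f(y,T) \in B_\eta(0) \subseteq ROA_f$. Finally the semigroup identity $\phi_f(\phi_f(y,T),s)=\phi_f(y,T+s)$ gives $\lim_{t \to \infty}||\phi_f(y,t)||_2 = \lim_{s \to \infty}||\phi_f(\phi_f(y,T),s)||_2 = 0$, so $y \in ROA_f$. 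This shows $B_\rho(x_0) \subseteq ROA_f$, i.e. $x_0 \in (ROA_f)^\circ$; as $x_0$ was arbitrary, $ROA_f$ is open.

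I expect the main obstacle to be the continuous-dependence step: it is the only place where regularity of $f$ beyond mere existence and uniqueness of trajectories is genuinely used, and one must confine the argument to the compact interval $[0,T]$ so that a uniform Lipschitz/Gr\"onwall bound applies. The remaining two steps, namely producing the attracting ball $B_\eta(0)$ and propagating convergence through the semigroup property, are essentially bookkeeping once the appropriate $T$ and $\rho$ are in hand.
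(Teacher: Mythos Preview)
The paper does not supply its own proof of this lemma; it simply cites Lemma~8.1 of Khalil~\cite{Khalil_1996} as a standard result. Your argument is correct and is essentially the textbook proof found there: fix an asymptotically stable neighborhood of the origin, use finite-time continuous dependence on initial data to steer all nearby initial conditions into that neighborhood, and conclude via the semigroup property. One small remark: the existence of the attracting ball $B_\eta(0)$ really only needs local \emph{asymptotic} stability of the origin (Lyapunov stability plus local attractivity), not the stronger exponential bound~\eqref{eqn: exp stab}; invoking exponential stability is sufficient in the setting of this paper but is not required for the lemma as stated in~\cite{Khalil_1996}.
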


Before proceeding we introduce some useful notation for the $\eta$-ball set entry times of solution maps. For a given function $\phi_f:\R^n \times \R \to \R^n$, $x \in ROA_f$, and $\eta>0$ we denote
\begin{align} \label{fun: set entry times}
F_\eta(x):=\inf\{t \ge 0: \phi_f(x,t) \in B_\eta(0) \}.
\end{align}
We now state two important properties of solution maps used in many of the proofs presented in this paper.
\begin{lem}[Exponential divergence of solution maps. Page 392 \cite{Hirch_2004}] \label{lem: exp divergence of solution maps}
	Suppose $f \in {C^2(\R^n, \R)}$ and there exists $\theta,R>0$ such that $||D^\alpha f(x)||_2<\theta$ for all $x\in B_R(0)$ and any $||\alpha||_1 \le 2$, where $\alpha \in \N^n$. Then the solution map satisfies the following inequality
	\vspace{-0.15cm}
	\begin{equation}
	||\phi_f(x,t) - \phi_f(y,t)||_2 \le e^{ \theta t} ||x-y||_2  \text{ for } t \ge 0 \text{ and } x, y \in ROA_f.
	\end{equation}
\end{lem}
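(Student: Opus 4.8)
The plan is to reduce the claim to a scalar differential inequality for the squared Euclidean distance between the two trajectories and then apply Gr\"onwall's inequality. First I would fix $x,y \in ROA_f$, write $u(t):=\phi_f(x,t)$ and $v(t):=\phi_f(y,t)$, and introduce the scalar function $g(t):=||u(t)-v(t)||_2^2 = (u(t)-v(t))^T(u(t)-v(t))$. Using the defining property~\eqref{soln map property} of the solution map we have $\dot u(t) = f(u(t))$ and $\dot v(t) = f(v(t))$, so differentiating $g$ gives $\dot g(t)=2\,(u(t)-v(t))^T\big(f(u(t))-f(v(t))\big)$.

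The next step is to bound this right-hand side by $2\theta\,g(t)$. Since $B_R(0)$ is convex and the first-order partial derivatives of $f$ are bounded by $\theta$ on $B_R(0)$ (the $||\alpha||_1=1$ part of the hypothesis), the fundamental theorem of calculus applied along the segment joining $v(t)$ to $u(t)$, namely $f(u)-f(v)=\int_0^1 Df\big(v+s(u-v)\big)(u-v)\,ds$, yields the local Lipschitz estimate $||f(u(t))-f(v(t))||_2 \le \theta\,||u(t)-v(t)||_2$ whenever the relevant trajectory points lie in $B_R(0)$. Combining this with the Cauchy--Schwarz inequality gives $\dot g(t) \le 2\,||u(t)-v(t)||_2\,||f(u(t))-f(v(t))||_2 \le 2\theta\, g(t)$.

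Finally I would integrate this differential inequality. From $\dot g(t)\le 2\theta g(t)$ together with $g(0)=||x-y||_2^2$, Gr\"onwall's inequality (equivalently, comparison of $g$ with the solution of $\dot h = 2\theta h$, $h(0)=g(0)$) gives $g(t)\le e^{2\theta t}||x-y||_2^2$. Taking square roots yields $||\phi_f(x,t)-\phi_f(y,t)||_2 \le e^{\theta t}||x-y||_2$, which is exactly the asserted inequality.

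The main obstacle is that the derivative bound is assumed only on $B_R(0)$, whereas the estimate is claimed for all $t \ge 0$ and all $x,y\in ROA_f$, and a trajectory initialized in $ROA_f$ need not remain in $B_R(0)$ for all time. I would resolve this by exploiting that $x,y\in ROA_f$ forces $\phi_f(x,t),\phi_f(y,t)\to 0$, so both trajectories eventually enter and thereafter remain in $B_R(0)$; on the complementary compact initial time interval the bounded-Jacobian argument can be rerun on the compact tube swept out by the trajectories, where $f \in C^2(\R^n,\R)$ supplies a finite Lipschitz constant. Since the statement is quoted verbatim from Hirsch--Smale (Page 392 \cite{Hirch_2004}), where precisely this local Lipschitz setting is treated, the cleanest route is to verify our hypotheses match theirs and cite the result, relegating the domain bookkeeping to that reference.
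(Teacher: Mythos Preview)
The paper does not supply its own proof of this lemma; it simply cites page~392 of \cite{Hirch_2004}. Your Gr\"onwall-based argument is the standard one and is correct, so in that sense your proposal matches the intended route.

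On the domain issue you raise: your compact-tube workaround is unnecessarily complicated. In every place where the paper actually invokes this lemma (Proposition~\ref{prop: ZLF is lip}), the additional standing hypothesis $ROA_f \subset B_R(0)$ is in force. Since $ROA_f$ is forward invariant under the flow (this is part of Definition~\ref{defn: asym and exp stab} and the definition of $ROA_f$), any trajectory with $x \in ROA_f$ satisfies $\phi_f(x,t) \in ROA_f \subset B_R(0)$ for all $t \ge 0$. Hence the Jacobian bound $\theta$ is available along the entire trajectory, and your differential inequality $\dot g(t) \le 2\theta g(t)$ holds for all $t \ge 0$ without any splitting into initial and asymptotic time intervals. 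The lemma as stated in isolation does omit this containment hypothesis, but the paper's applications always supply it.
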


\begin{lem}[Smoothness of the solution map. Page 149 \cite{Hirch_2004}]
	\label{lem: diff soln map}
	If $f \in C^1(\R^n,\R^n)$ then the solution map is such that $\phi_f \in C^1(\R^n \times \R, \R)$. 
\end{lem}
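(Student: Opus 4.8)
The plan is to establish the two partial derivatives $\nabla_t \phi_f$ and $\nabla_x \phi_f$ separately and then argue their joint continuity in $(x,t)$. The time derivative is essentially free: the defining relation $\frac{\partial \phi_f(x,t)}{\partial t} = f(\phi_f(x,t))$ together with the continuity of $f$ and the (separately established) continuity of $\phi_f$ immediately shows that $\nabla_t \phi_f$ exists and is continuous. So the substance of the lemma lies entirely in differentiability with respect to the initial condition $x$.

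First I would record continuous dependence on initial conditions. Because $f \in C^1$ it is locally Lipschitz, so on any compact time interval $[0,T]$ and over a bounded set of initial conditions, Gr\"onwall's inequality (exactly in the spirit of the exponential divergence bound in Lemma~\ref{lem: exp divergence of solution maps}) gives $||\phi_f(x+h,t) - \phi_f(x,t)||_2 \le e^{Lt}||h||_2$ for a local Lipschitz constant $L$. In particular $\phi_f$ is jointly continuous in $(x,t)$ and the displacement $\phi_f(x+h,t) - \phi_f(x,t)$ is $O(||h||_2)$ uniformly on $[0,T]$.

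The core step is to identify the candidate Jacobian. For fixed $x$ I would introduce the variational (linearized) equation
\[
\frac{d}{dt} U(t) = Df(\phi_f(x,t)) U(t), \qquad U(0) = I,
\]
whose coefficient matrix $t \mapsto Df(\phi_f(x,t))$ is continuous along the trajectory; by the standard existence/uniqueness theory for linear ODEs this admits a unique matrix solution $U(x,t)$ on $[0,\infty)$. The claim is that $\nabla_x \phi_f(x,t) = U(x,t)$. To verify it I would set $z_h(t) := \phi_f(x+h,t) - \phi_f(x,t) - U(x,t)h$ and differentiate, using the first-order Taylor expansion of $f$ about $\phi_f(x,t)$ to write
\[
f(\phi_f(x+h,t)) - f(\phi_f(x,t)) = Df(\phi_f(x,t))\bigl(\phi_f(x+h,t)-\phi_f(x,t)\bigr) + R_h(t),
\]
where continuity of $Df$ and the $O(||h||_2)$ bound on the displacement force the remainder to satisfy $||R_h(t)||_2 = o(||h||_2)$ uniformly on $[0,T]$. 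This yields $\dot z_h(t) = Df(\phi_f(x,t)) z_h(t) + R_h(t)$ with $z_h(0)=0$, and a final application of Gr\"onwall's inequality gives $||z_h(t)||_2 \le o(||h||_2)\, e^{LT}$, i.e.\ $z_h(t) = o(||h||_2)$. This is precisely the statement that $U(x,t)$ is the Fr\'echet derivative of $\phi_f$ in $x$, so $\nabla_x \phi_f = U$.

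Finally I would upgrade to continuity of the Jacobian. Since $\phi_f$ is continuous in $(x,t)$ and $Df$ is continuous, the coefficient $Df(\phi_f(x,t))$ of the variational equation depends continuously on $(x,t)$; continuous dependence of solutions of linear ODEs on their coefficients then makes $U(x,t) = \nabla_x \phi_f(x,t)$ continuous in $(x,t)$. Combined with the continuity of $\nabla_t \phi_f$ already shown, every first-order partial is continuous, giving $\phi_f \in C^1$. The main obstacle is the core step: controlling the Taylor remainder $R_h$ \emph{uniformly in $t$} and closing the argument with Gr\"onwall, so as to prove that the variational solution genuinely is the derivative rather than merely a plausible candidate.
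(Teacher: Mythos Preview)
Your argument is correct and is essentially the classical proof via the variational equation and Gr\"onwall; however, the paper does not actually prove this lemma at all. It is stated with a citation to \cite{Hirch_2004} (page 149) and used as a black box, so there is no in-paper proof to compare against. Your sketch is the standard textbook argument one finds in that reference.
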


\section{The Problem of Approximating The ROA} \label{sec: estimate ROA}
Consider $f \in C^2(\R^n,\R^n)$. The goal of this paper is to compute an optimal (with respect to some set metric) inner approximation of $ROA_f$ (given in Defn.~\ref{defn: ROA}). That is, we would like to solve the following optimization problem:
\vspace{-0.15cm}
\begin{align} \label{opt: geometric ROA}
& \min_{X \in \mcl C}\{D(ROA_f,X)\}\\ \nonumber
& \text{ such that } X \subseteq ROA_f,
\end{align}
where $\mcl C \subset P(\R^n)$ is some constraint set (recalling from Sec.~\ref{sec: notation} that $P(\R^n)$ is the power set of $\R^n$) and $D: \{Y: Y\subset \R^n \} \times \{Y: Y\subset \R^n \} \to \R$ is some set metric. Note, if the constraint set contains all subsets of $\R^n$, that is $\mcl C =P(\R^n)$, then trivially the optimization problem is solved by the region of attraction, $X=ROA_f$.

The optimization problem given in Eqn.~\eqref{opt: geometric ROA} is fundamentally ``geometric in nature" since it is solved by finding a subset of Euclidean space, $X \subset \R^n$. In this paper we reformulate the optimization problem given in Eqn.~\eqref{opt: geometric ROA} as an optimization problem that is ``algebraic in nature", being solved by a function rather than a set. In order to formulate such an ``algebriac" optimization problem we first propose a converse Lyapunov function (given later in Eqn.~\eqref{fun: ZLF}), denoted here as $W$, whose $1$-sublevel set is equal to $ROA_f$; that is $ROA_f= \{x \in \R^n: W(x)<1\}$. Then rather than finding the set ``closest" to $ROA_f$, we find the ``closest" $d$-degree polynomial to $W$ with respect to the $L^1$ norm. Thus we consider the following ``algebriac" problem:
\vspace{-0.2cm}
\begin{align} \label{opt: algebriac ROA}
& P_d\in \arg \min_{J \in \mcl P_d(\R^n,\R)} \int_{\Lambda} |J(x)-W(x)| dx\\ \nonumber
& \text{ such that } W(x) \le J(x) \text{ for all } x \in \Omega,
\end{align}
where $ROA_f \subseteq \Lambda \subseteq \Omega \subset \R^n$. Then, Cor.~\ref{cor: close in L1 implies close in V norm} (found in Appendix~\ref{sec: appendix 2}) can be used to show that $\{x \in \Lambda: P_d(x) < 1 \}$ converges to $\{x \in \Lambda: W(x) < 1 \}=ROA_f$ as $d \to \infty$ with respect to the volume metric (given in Eqn.~\eqref{eqn: volume metric}).

Solving the optimization problem given in Eqn.~\eqref{opt: algebriac ROA} has the following challenges:
\begin{enumerate}
	\item Does there exist a converse Lyapunov function $W:\R^n \to \R$ such that $ROA_f= \{x \in \R^n: W(x)<1\}$?
	\item Can the constraint, $W(x) \le J(x) \text{ for all } x \in \Omega$, be tightened to a convex constraint without necessarily having an analytical formula for $W$?
	\item Does the solution, $P_d$, tend towards $W$ with respect to the $L^1$ norm as $d \to \infty$?
\end{enumerate}
In the next section we tackle the first of these challenges. We propose a converse Lyapunov function, $W$, whose $1$-sublevel set is equal to $ROA_f$. Then, in Sec.~\ref{sec: bounding LF from above} we tackle the second challenge; we propose a sufficient condition, in the form of a linear partial differential inequality, that when satisfied by a function $J$ implies $W(x) \le J(x) \text{ for all } x \in \Omega$. Finally, in Appendix~\ref{sec: appendix mollification}, we tackle the third challenge of showing that there exists a sequence of $d$-degree polynomials, feasible to Opt.~\eqref{opt: algebriac ROA} for $d \in \N$, that converges to $W$ with respect to the $L^1$ norm. For implementation purposes Opt.~\eqref{opt: algebriac ROA} is then tightened to an SOS optimization problem, given in Eqn.~\eqref{opt: SOS for ROA}, that can be efficiently numerically solved. The main result of the paper is then given in Theorem~\ref{thm: SOS converges to ROA in V}, showing that our proposed family of $d$-degree SOS Optimization Problems~\eqref{opt: SOS for ROA} yields a sequence of sets that converge to the region of attraction of a given locally exponentially stable ODE with respect to the volume metric as $d \to \infty$.
\section{A Globally Lipschitz Continuous Converse Lyapunov Function That Characterizes the ROA}  \label{sec: A globally lip cts converse LF}
In~\cite{vannelli1985maximal} a converse Lyapunov function, called the maximal Lyapunov function, was proposed. It was shown that for any given asymptotically stable ODE there exists a maximal Lyapunov function whose $\infty$-sublevel set is equal to the region of attraction of the ODE. However, since by definition any maximal Lyapunov function is unbounded outside of the region of attraction it cannot be approximated arbitrarily well (with respect to any norm) by a polynomial over any compact set that contains points outside of the region of attraction (since polynomials are bounded over compact sets). Thus, it is not possible to design an SOS based algorithm that can approximate maximal Lyapunov functions arbitrarily well. To overcome this challenge we propose a new converse Lyapunov function (found in Eqn.~\eqref{fun: ZLF}) whose $1$-sublevel set is equal to $ROA_f$, is globally bounded, and is globally Lipschitz continuous. Before introducing our new converse Lyapunov function let us recall the definition of Lipschitz continuity.
\begin{defn} \label{defn:lip cts}
	Consider sets $\Theta_1 \subset \R^n$ and $\Theta_2 \subset \R^m$. We say the function $F: \Theta_1 \to \Theta_2$ is \textbf{locally Lipschitz continuous} on $\Theta_1 \text{ and }\Theta_2$, denoted $F \in LocLip(\Theta_1, \Theta_2)$, if for every compact set $X \subseteq \Theta_1$ there exists $K>0$ (that may depend on $X$) such that for all $x,y \in X$
	\begin{align} \label{lip def}
	||F(x) - F(y)||_2 \le K ||x - y||_2.
	\end{align}
	If there exists a single $K>0$ such that Eqn.~\eqref{lip def} holds for all $x,y \in \Theta_1$ we say $F$ is \textbf{globally Lipschitz continuous}, denoted $F \in Lip(\Theta_1,\Theta_2)$.
\end{defn}

We consider two different types of converse Lyapunov functions. The first converse Lyapunov function (given in Eqn.~\eqref{fun: MLF}) is a special case of those first found in~\cite{massera1949liapounoff} that have the form $V_1(x):=\int_0^\infty G(||\phi_f(x,t)||_2)dt$ for some class K function, $G:[0, \infty) \to [0, \infty)$ (class K is the class of functions which monotonically approach zero at zero). In~\cite{vannelli1985maximal} it was shown that for a locally stable ODE, the $\infty$-sublevel set of $V_1$ is equal to the region of attraction of the ODE; this Lyapunov function was named the maximal Lyapunov function. In this paper we only consider locally exponentially stable systems and hence may restrict ourselves to the special case when $G(y)=y^{2 \beta}$ for some $\beta \in \N$.

The second converse Lyapunov function we consider (found in Eqn.~\eqref{fun: ZLF}) can be thought of as a nonlinear transformation of the first converse Lyapunov function. A function of a similar structure was previously considered in~\cite{zubov1964methods} and took the form $V_2(x):=\exp\left(-\int_0^\infty G(||\phi_f(x,t)||_2)dt\right)-1$. Although~\cite{zubov1964methods} used $V_2$ to certify the stability of a system, $V_2$  is not a Lyapunov function in the classical sense since it is not positive everywhere (unlike our proposed converse Lyapunov function in Eqn.~\eqref{fun: ZLF}). We note that~\cite{zubov1964methods} did establish the globally continuity of $V_2$ but did not show the stronger result that $V_2$ is Lipschitz continuous.

Now, consider $f \in LocLip(\R^n, \R^n)$, $\lambda>0$ and $\beta \in \N$. Let us denote the functions $V_{\beta}:ROA_f \to \R$ and $W_{\lambda, \beta}:\R^n \to \R$ where
\vspace{-0.2cm}
\begin{align}  \label{fun: MLF}
&V_{\beta}(x):=\int_{0}^\infty ||\phi_f(x,t)||_2^{2 \beta} dt,\\ \label{fun: ZLF}
&W_{\lambda, \beta}(x):=\begin{cases}
1 - \exp(- \lambda \int_{0}^\infty ||\phi_f(x,t)||_2^{2 \beta} dt) \text{ when } x \in ROA_f\\
1 \text{ otherwise.}
\end{cases}
\end{align}
\vspace{-0.8cm}
\subsection{Converse Lyapunov Functions that Characterize the ROA}
The function, $V_\beta$, given in Eqn.~\eqref{fun: MLF} is a special case of a class of Lyapunov functions called maximal Lyapunov functions~\cite{vannelli1985maximal}. In the following lemma we will show that $V_\beta$ tends to infinity for sequences of points approaching the boundary of the region of attraction and is finite inside the region of attraction. 

\begin{lem} \label{lem: MLF}
	Consider $f \in LocLip(\R^n, \R)$, $\beta \in \N$ and $V_\beta$ given in Eqn.~\eqref{fun: MLF}. Suppose there exists $R,\eta>0$ such that $ROA_f \subset B_R(0)$ and $B_\eta(0)$ is an exponentially stable set (Defn.~\ref{defn: asym and exp stab}) of the ODE~\eqref{eqn: ODE}. Then the following holds.
		\begin{enumerate}
		\item  For any sequence $\{x_k\}_{k \in \N} \subset ROA_f$ such that $\lim_{k \to \infty} x_k \in \partial ROA_f$ we have that
		\begin{align} \label{eqn: MLF tends to infty}
		\lim_{k \to \infty} V_\beta(x_k) = \infty.
		\end{align}	
		\item We have that
		\begin{equation} \label{eqn: MLF characterizes ROA}
	x \in ROA_f \text{ if and only if } V_{\beta}(x)<\infty.
		\end{equation}
	\end{enumerate}
\end{lem}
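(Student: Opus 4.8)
The plan is to reduce both assertions to a single geometric fact: every trajectory starting outside $ROA_f$ has norm bounded below by $\eta$ for all time. First I would record the needed preliminaries. Since $B_\eta(0)$ is an exponentially stable set, item (2) of Defn.~\ref{defn: asym and exp stab} sends every point of $B_\eta(0)$ to the origin, so $B_\eta(0) \subseteq ROA_f$ and therefore $\R^n \setminus ROA_f \subseteq \R^n \setminus B_\eta(0)$. Next, using the semigroup identity in Eqn.~\eqref{soln map property}, I would show that both $ROA_f$ and its complement are forward invariant: if $\phi_f(x,t_0) \in ROA_f$ for some $t_0 \ge 0$, then $\lim_{s \to \infty}\phi_f(x,t_0+s)=0$, whence $x \in ROA_f$. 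Combining the two observations, for every $x \notin ROA_f$ the entire trajectory avoids $B_\eta(0)$, i.e. $||\phi_f(x,t)||_2 \ge \eta$ for all $t \ge 0$.

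With this in hand, the equivalence in Eqn.~\eqref{eqn: MLF characterizes ROA} follows quickly. For the implication $V_\beta(x)<\infty \Rightarrow x \in ROA_f$ I argue the contrapositive: if $x \notin ROA_f$, the lower bound just derived gives $V_\beta(x) \ge \int_0^\infty \eta^{2\beta}\,dt = \infty$. For the converse $x \in ROA_f \Rightarrow V_\beta(x)<\infty$, I use that $\phi_f(x,t)\to 0$, so there is a finite time $T$ (namely the entry time $F_\eta(x)$ of Eqn.~\eqref{fun: set entry times}) with $\phi_f(x,T)\in B_\eta(0)$. I then split $V_\beta(x)=\int_0^T(\cdot)\,dt+\int_T^\infty(\cdot)\,dt$. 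On $[0,T]$ the integrand is continuous by Lemma~\ref{lem: diff soln map}, hence bounded on a compact interval, so that piece is finite; on $[T,\infty)$ the semigroup identity together with the exponential bound Eqn.~\eqref{eqn: exp stab} (applied with stable set $B_\eta(0)$) gives $||\phi_f(x,t)||_2 \le \mu\eta\, e^{-\delta(t-T)}$, whose $2\beta$-power is integrable. Hence $V_\beta(x)<\infty$.

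The remaining, more delicate claim is the boundary blow-up, which I would obtain by passing to the limit with Fatou's lemma. Let $\{x_k\}\subset ROA_f$ with $\bar x:=\lim_k x_k \in \partial ROA_f$. Because $ROA_f$ is open (Lemma~\ref{lem: ROA open}), $\bar x \notin ROA_f$, so the preliminary fact yields $||\phi_f(\bar x,t)||_2 \ge \eta$ for all $t \ge 0$. Continuity of the solution map in its initial condition (Lemma~\ref{lem: diff soln map}) gives the pointwise convergence $||\phi_f(x_k,t)||_2^{2\beta}\to ||\phi_f(\bar x,t)||_2^{2\beta}$ for each fixed $t$, and since the integrands are nonnegative, Fatou's lemma gives
\[
\liminf_{k\to\infty} V_\beta(x_k) \ge \int_0^\infty ||\phi_f(\bar x,t)||_2^{2\beta}\,dt \ge \int_0^\infty \eta^{2\beta}\,dt = \infty ,
\]
so $\lim_{k\to\infty} V_\beta(x_k)=\infty$. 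I expect the main obstacle to be exactly this interchange of limit and integral: each $V_\beta(x_k)$ is finite while the limiting integral is infinite, so there is no dominating function and dominated convergence is unavailable. The resolution is that Fatou's lemma requires only pointwise convergence and nonnegativity, both of which are supplied by the continuity of $\phi_f$ and the uniform bound $\eta$; notably, the hypothesis $ROA_f \subset B_R(0)$ is not needed for this argument.
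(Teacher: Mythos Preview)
Your proof is correct. For Statement~2 your argument is essentially the paper's: split the integral at the entry time $F_\eta(x)$, control the tail with the exponential bound from Eqn.~\eqref{eqn: exp stab}, and for the contrapositive use that a trajectory starting outside $ROA_f$ never enters $B_\eta(0)$. Your derivation of this last fact via forward invariance of $\R^n\setminus ROA_f$ is actually more explicit than the paper's, which simply asserts the existence of a uniform lower bound $\eps$ without saying why such a bound follows from $\lim_{t\to\infty}\|\phi_f(x,t)\|_2\ne 0$.

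For Statement~1 the approaches genuinely differ. The paper lower-bounds $V_\beta(x_k)\ge \eta_1^{2\beta} T_k$, where $T_k=F_{\eta_1}(x_k)$ is the entry time to a slightly smaller ball, and then argues $T_k\to\infty$ by contradiction: if some subsequence of $\{T_k\}$ were bounded, Bolzano--Weierstrass together with continuity of $\phi_f$ would place $\phi_f(x^*,T^*)$ in $B_\eta(0)$, forcing $x^*\in ROA_f$ and contradicting $x^*\in\partial ROA_f$. Your route via Fatou's lemma is more direct: pointwise convergence of the integrands to $\|\phi_f(\bar x,t)\|_2^{2\beta}$ plus the lower bound $\eta^{2\beta}$ on the limiting integrand finishes the job in one line. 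Your argument is shorter and, as you observe, avoids the hypothesis $ROA_f\subset B_R(0)$ entirely (the paper uses it only to bound the finite-time piece of the integral in Statement~2 by $F_\eta(x)R^{2\beta}$, which you replace with a continuity-on-compact argument). The paper's entry-time argument, in exchange, is wholly elementary and avoids any measure theory.
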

\begin{proof}
We first show Statement~1) in Lem.~\ref{lem: MLF} by showing Eqn.~\eqref{eqn: MLF tends to infty} holds. Suppose $\{x_k\}_{k \in \N} \subset ROA_f$ is such that $x^*:=\lim_{k \to \infty} x_k \in \partial ROA_f$. Let $0<\eta_1<\eta$ and consider $T_k:=F_{\eta_1}(x_k)$ (where $F_\eta(x)$ is given in Eqn.~\eqref{fun: set entry times}). Since $x_k \in ROA_f$ it follows $T_k< \infty$ for all $k \in \N$. Moreover, it is clear that $||\phi_f(x_k,t)||_2\ge \eta_1$ for all $t \in [0,T_k)$.

Now,
\begin{align} \label{pfe 1}
V_\beta(x_k)& = \int_{0}^{T_k} ||\phi_f(x_k,t)||_2^{2 \beta} dt + \int_{T_k}^\infty ||\phi_f(x_k,t)||_2^{2 \beta} dt \\ \nonumber
& \ge \int_{0}^{T_k} ||\phi_f(x_k,t)||_2^{2 \beta} dt \ge \eta_1^{2 \beta} T_k.
\end{align}

We will now show $T_k \to \infty$ as $k \to \infty$ and thus Eqn.~\eqref{pfe 1} shows Eqn.~\eqref{eqn: MLF tends to infty}. For contradiction suppose $\lim_{k \to \infty} T_k \ne 0$, then there exists a bounded subsequence $\{T_{k_n}\}_{n \in \N} \subset \{T_k\}_{k \in \N}$. Now by Theorem~\ref{thm: Bolzano} there exists a subsequence of the subsequence $\{T_{k_n}\}_{n \in \N}$, we denote by $\{T_i\}_{i \in \N}$, that converges to a finite limit $T^*:= \lim_{i \to \infty} T_i< \infty$. Let us denote the corresponding subsequence of $\{x_k\}_{k \in \N}$ by $\{x_i\}_{i \in \N}$. Since $\lim_{k \to \infty} x_k \to x^*$ and every subsequence of a convergent sequence must converge to the same limit we have $\lim_{i \to \infty} x_i = x^*$. 

Since $\phi_f \in C(\R^n \times [0, \infty), \R^n)$ (by Lemma~\ref{lem: diff soln map}) we have that
\begin{align*}
||\phi_f(x^*, T^*)||_2 = \lim_{i \to \infty} ||\phi_f(x_i,T_i)||_2 \le \eta_1 < \eta,
\end{align*}
and since $B_\eta(0)$ is an exponentially stable set we have that
\begin{align} \label{pfe 2}
&||\phi_f(x^*, T^*+t)||_2^2= ||\phi_f(\phi_f(x^*,T^*), t)||_2^2\\ \nonumber
& \qquad \le \mu^2 e^{-2 \delta t} || \phi_f(x^*,T^*)||_2^2  \le \mu^2 \eta^2 e^{-2\delta t}.
\end{align}
Therefore, Eqn.~\eqref{pfe 2} implies that
\begin{align*}
\lim_{t \to \infty} || \phi_f(x^*,t)||_2 & = \lim_{t \to \infty} ||\phi_f(x^*, T^*+t)||_2 = \lim_{t \to \infty} \mu \eta e^{-\delta t}= 0,
\end{align*}
thus showing $x^* \in ROA_f$. Now $ROA_f$ is an open set (by Lemma~\ref{lem: ROA open}). Therefore if $x^* \in ROA_f$ then $x^* \notin \partial ROA_f$, providing a contradiction that $x^* \in \partial ROA_f$. Hence, Eqn.~\eqref{eqn: MLF tends to infty} holds.

We now Statement~2) in Lem.~\ref{lem: MLF} by showing Eqn.~\eqref{eqn: MLF characterizes ROA} holds. First suppose $x \in ROA_f$. We will now show $V_\beta(x)<\infty$. Since $x \in ROA_f$ we have that $\lim_{t \to \infty} ||\phi_f(x,t)||_2 =0$ and thus it follows there exists $T<\infty$ such that $||\phi_f(x,t)||_2< \eta$ for all $t \ge T$ implying  $F_\eta(x)\le T <\infty$. Moreover, by properties of the set entry time we have that $||\phi_f(x,F_\eta(x))||_2 \le \eta$ and since $B_\eta(0)$ is an exponentially stable set we have that,
\begin{align}  \nonumber
||\phi_f(x,t)||_2= ||\phi_f(\phi_f(x,F_\eta(x)), & t-F_\eta(x))||_2 \le \mu  \eta e^{-\delta(t - F_\eta(x) )}\\ \label{pfeqn 1}
&\text{ for all } t>F_\eta(x).
\end{align}
 Therefore, using the fact that $ROA_f \subset B_R(0)$ together with Eqn.~\eqref{pfeqn 1} we get that,
\begin{align*}
& V_\beta(x) = \int_0^{F_\eta(x)} ||\phi_f(x,t)||_2^{2 \beta} dt + \int_{F_\eta(x)}^{\infty} ||\phi_f(x,t)||_2^{2 \beta} dt\\
& \le F_\eta(x) R^{2 \beta} + \mu^{2 \beta} \eta^{2 \beta} \int_{F_\eta(x)}^{\infty} e^{-2 \delta \beta(t - F_\eta(x) )} dt\\
&= F_\eta(x) R^{2 \beta} + \frac{\mu^{2 \beta} \eta^{2 \beta}}{2\delta \beta} < \infty.
\end{align*}

Now, on the other hand let us now suppose $x \in \R^n$ is such that $V_\beta(x)< \infty$. We will show $x \in ROA_f$. For contradiction suppose $x \notin ROA_f$. Then $\lim_{t \to \infty}||\phi_f(x,t)||_2 \ne 0$. Therefore, there exists $\eps>0$ such that $||\phi_f(x,t)||_2> \eps$ for all $t \ge 0$. Thus
\begin{align*}
V_\beta(x)= \int_0^\infty ||\phi_f(x,t)||_2^{2 \beta} dt \ge \int_0^\infty  \eps^{2 \beta} dt = \infty,
\end{align*}
providing a contradiction that $V_\beta(x)< \infty$. Hence, Eqn.~\eqref{eqn: MLF characterizes ROA} holds.
\end{proof}
As we will show next, the function, $W_{\lambda,\beta}$, given in Eqn.~\eqref{fun: ZLF}, can also characterize $ROA_f$ as its $1$-sublevel set.
\begin{cor} \label{cor: ZLF characterizes ROA}
	Consider $f \in LocLip(\R^n, \R)$, $\beta \in \N$, $\lambda>0$ and $W_{\lambda,\beta}$ given in Eqn.~\eqref{fun: ZLF}. Suppose there exists $R,\eta>0$ such that $ROA_f \subset B_R(0)$ and $B_\eta(0)$ is an exponentially stable set (Defn.~\ref{defn: asym and exp stab}) to the ODE~\eqref{eqn: ODE}. Then the following holds.
\begin{enumerate}
	\item  For any sequence $\{x_k\}_{k \in \N} \subset ROA_f$ such that $\lim_{k \to \infty} x_k \in \partial ROA_f$ we have that
	\begin{align} \label{eqn: ZLF tends to 1}
	\lim_{k \to \infty} W_{\lambda,\beta}(x_k) = 1.
	\end{align}	
	\item We have that
	\begin{equation} \label{eqn: ZLF characterizes ROA}
	 ROA_f = \{x\in \R^n: W_{\lambda,\beta}(x)<1 \}.
	\end{equation}
\end{enumerate}
\end{cor}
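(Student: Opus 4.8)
The plan is to obtain both statements directly from Lem.~\ref{lem: MLF} by noting that, on $ROA_f$, the function $W_{\lambda,\beta}$ defined in Eqn.~\eqref{fun: ZLF} is simply the composition of $V_\beta$ (from Eqn.~\eqref{fun: MLF}) with the map $g(s) := 1 - \exp(-\lambda s)$. Since $\lambda > 0$, this $g$ is a continuous, strictly increasing bijection from $[0,\infty)$ onto $[0,1)$ with $g(0) = 0$ and $\lim_{s \to \infty} g(s) = 1$, so every property of $W_{\lambda,\beta}$ we need should follow by transporting the corresponding property of $V_\beta$ through $g$. The hypotheses of Cor.~\ref{cor: ZLF characterizes ROA} are exactly those of Lem.~\ref{lem: MLF}, so that lemma applies verbatim.

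For Statement 1, I would take any sequence $\{x_k\}_{k \in \N} \subset ROA_f$ with $\lim_{k \to \infty} x_k \in \partial ROA_f$. Because each $x_k \in ROA_f$, Eqn.~\eqref{fun: ZLF} gives $W_{\lambda,\beta}(x_k) = 1 - \exp(-\lambda V_\beta(x_k))$, and Statement 1 of Lem.~\ref{lem: MLF} yields $V_\beta(x_k) \to \infty$. As $\lambda > 0$, the term $\exp(-\lambda V_\beta(x_k))$ then tends to $0$, so $W_{\lambda,\beta}(x_k) \to 1$, establishing Eqn.~\eqref{eqn: ZLF tends to 1}.

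For Statement 2, I would verify the two inclusions in Eqn.~\eqref{eqn: ZLF characterizes ROA}. If $x \in ROA_f$, Statement 2 of Lem.~\ref{lem: MLF} gives $V_\beta(x) < \infty$; since $V_\beta(x) = \int_0^\infty ||\phi_f(x,t)||_2^{2\beta}\,dt \ge 0$ is finite, we have $\exp(-\lambda V_\beta(x)) \in (0,1]$ and hence $W_{\lambda,\beta}(x) = 1 - \exp(-\lambda V_\beta(x)) < 1$. Conversely, if $x \notin ROA_f$ then Eqn.~\eqref{fun: ZLF} sets $W_{\lambda,\beta}(x) = 1$, so $x$ lies outside the $1$-sublevel set. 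The only point requiring care here is the strictness $W_{\lambda,\beta}(x) < 1$ on $ROA_f$, which rests solely on the elementary fact that $\exp(-\lambda V_\beta(x)) > 0$ whenever $V_\beta(x)$ is finite; the substantive analytic work (the divergence of $V_\beta$ at the boundary and its finiteness inside $ROA_f$) is already carried out in Lem.~\ref{lem: MLF}, so no further machinery is needed.
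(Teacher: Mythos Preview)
Your proposal is correct and follows essentially the same approach as the paper: both reduce everything to Lem.~\ref{lem: MLF} via the relation $W_{\lambda,\beta}=1-\exp(-\lambda V_\beta)$ on $ROA_f$. The only cosmetic difference is in the reverse inclusion of Statement~2, where the paper inverts the formula to get $V_\beta(y)=-\tfrac{1}{\lambda}\ln(1-W_{\lambda,\beta}(y))<\infty$ and then invokes Lem.~\ref{lem: MLF}, whereas you appeal directly to the piecewise definition $W_{\lambda,\beta}(x)=1$ for $x\notin ROA_f$; both are equally valid.
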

\begin{proof}
	We first show Statement~1) in Cor.~\ref{cor: ZLF characterizes ROA} by showing Eqn.~\eqref{eqn: ZLF tends to 1} holds. For $x \in ROA_f$ we have that $W_{\lambda,\beta}(x) = 1 - e^{-\lambda V_\beta(x)}$. Moreover, $e^x$ is a continuous function of $x \in \R$. Therefore, by Lemma~\ref{lem: MLF}, for $\{x_k\}_{k \in \N} \subset ROA_f$ we have that \begin{align*}
	\lim_{k \to \infty} W_{\lambda,\beta}(x_k)= 1 - \exp \left({-\lambda \lim_{k \to \infty} V_\beta(x_k)} \right) =1.
	\end{align*}
	
		We next show Statement~2) in Cor.~\ref{cor: ZLF characterizes ROA} by showing Eqn.~\eqref{eqn: ZLF characterizes ROA} holds. If $x \in ROA_f$ then by Lemma~\ref{lem: MLF} we have that $V_\beta(x) < \infty$ and thus $e^{- \lambda V_\beta(x)} >0$ implying $W_{\lambda, \beta}(x)= 1 - e^{-\lambda V_\beta(x)}<1$. Therefore, $ROA_f \subseteq \{x \in \R^n: W_{\lambda, \beta}(x)< 1  \}$. On the other hand if $y \in \{x \in \R^n: W_{\lambda, \beta}(x)< 1  \}$ then $a:= 1 - W_{\lambda,\beta}(y)>0$. Thus,  $V_\beta(y) = - \frac{1}{\lambda} \ln(a)< \infty$. Lemma~\ref{lem: MLF} shows if $V_\beta(y) < \infty$ then $y \in ROA_f$. Hence, $ \{x \in \R^n: W_{\lambda, \beta}(x)< 1  \} \subseteq ROA_f $.
\end{proof}

\subsection{A Globally Lipschitz Continuous Lyapunov Function}
The function $V_\beta$ is only defined over the set $ROA_f$ and is unbounded. Such properties make approximating $V_\beta$ by polynomials challenging. On the other hand $W_{\lambda,\beta}$ is defined over the whole of $\R^n$ and is bounded by $1$. What is more, we next show in Prop.~\ref{prop: ZLF is lip} that $W_{\lambda,\beta}$ is globally Lipschitz continuous. One may intuit this continuity property by considering the similarity in structure between $W_{\lambda, \beta}$ and the standard mollifier given in Eqn.~\eqref{fun: mollifier}; a function known to be infinitely differentiable.
\begin{prop}\label{prop: ZLF is lip}
Consider $f \in C^2(\R^n, \R)$ and $W_{\lambda, \beta}$ as in Eqn.~\eqref{fun: ZLF} where $\lambda>0$ and $\beta \in \N$. Suppose there exists $\theta,\eta,R>0$ such that $||D^\alpha f(x)||_2<\theta$ for all $x \in B_R(0)$ and $||\alpha||_1 \le 2$, $B_\eta(0)$ is an exponentially stable set (Defn.~\ref{defn: asym and exp stab}) to the ODE~\eqref{eqn: ODE}, and $ROA_f \subset B_R(0)$. Then if $\lambda > \theta \eta^{-2 \beta}$ and $ \beta>\frac{\theta}{2\delta} +\frac{1}{2}$ we have that $W_{\lambda, \beta} \in Lip(\R^n, \R)$. Moreover, the Lipschitz constant of $W_{\lambda, \beta}$ is less than or equal to $K>0$, where
\begin{align} \label{eqn: lip constant of ZLF}
K:=2 \lambda \max \left\{\frac{2 \beta R^{2 \beta -1}}{\theta}, \frac{2 \beta (\mu \eta)^{2 \beta -1}}{\delta(2\beta -1) - \theta} \right\}.
\end{align}
\end{prop}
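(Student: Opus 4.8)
The plan is to reduce the statement to a gradient bound on the open set $ROA_f$ together with a path argument that upgrades this local bound to a global Lipschitz estimate on all of $\R^n$. Writing $h:=1-W_{\lambda,\beta}$, so that $h(x)=e^{-\lambda V_\beta(x)}$ for $x\in ROA_f$ and $h(x)=0$ otherwise, it suffices to show $h\in Lip(\R^n,\R)$ with constant $K$. First I would record that $h$ is continuous on $\R^n$: continuity on the open sets $ROA_f$ and $\R^n/(ROA_f)^{cl}$ is immediate, while continuity across $\partial ROA_f$ follows from Statement 1) of Cor.~\ref{cor: ZLF characterizes ROA}, which gives $W_{\lambda,\beta}\to 1$ (equivalently $h\to 0$) along any sequence in $ROA_f$ approaching the boundary, and $h\equiv 0$ on the exterior.

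The technical core is to show $V_\beta\in C^1(ROA_f,\R)$ and to bound $||\nabla h(x)||_2\le K$ for every $x\in ROA_f$. Since $f\in C^2$, Lemma~\ref{lem: diff soln map} gives $\phi_f\in C^1$, so the integrand $||\phi_f(x,t)||_2^{2\beta}$ is $C^1$ in $x$ with $\nabla_x||\phi_f(x,t)||_2^{2\beta}=2\beta||\phi_f(x,t)||_2^{2\beta-2}\phi_f(x,t)^T\frac{\partial\phi_f}{\partial x}(x,t)$. Taking $y\to x$ in Lemma~\ref{lem: exp divergence of solution maps} yields the Jacobian bound $||\frac{\partial\phi_f}{\partial x}(x,t)||_2\le e^{\theta t}$, hence $||\nabla_x||\phi_f(x,t)||_2^{2\beta}||_2\le 2\beta||\phi_f(x,t)||_2^{2\beta-1}e^{\theta t}$. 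I would then justify differentiation under the integral sign, using that $F_\eta$ is locally bounded on $ROA_f$ (trajectories from a compact subset of the basin enter $B_\eta(0)$ in uniformly bounded time) together with the decay estimate below, which makes the differentiated integral converge locally uniformly; this gives $||\nabla V_\beta(x)||_2\le 2\beta\int_0^\infty||\phi_f(x,t)||_2^{2\beta-1}e^{\theta t}\,dt$.

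Next I would split this integral at the entry time $F_\eta(x)$ from Eqn.~\eqref{fun: set entry times}. On $[0,F_\eta(x)]$ the trajectory stays in the forward-invariant set $ROA_f\subset B_R(0)$, so $||\phi_f(x,t)||_2\le R$ and that piece is bounded by $\frac{2\beta R^{2\beta-1}}{\theta}e^{\theta F_\eta(x)}=:A\,e^{\theta F_\eta(x)}$. On $[F_\eta(x),\infty)$ exponential stability of $B_\eta(0)$ gives $||\phi_f(x,t)||_2\le \mu\eta e^{-\delta(t-F_\eta(x))}$, and substituting $t=F_\eta(x)+s$ the piece is bounded by $\frac{2\beta(\mu\eta)^{2\beta-1}}{\delta(2\beta-1)-\theta}e^{\theta F_\eta(x)}=:B\,e^{\theta F_\eta(x)}$, where the integral converges precisely because $\beta>\frac{\theta}{2\delta}+\frac12$ forces $\delta(2\beta-1)>\theta$. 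Summing and using $A+B\le 2\max\{A,B\}$ gives $||\nabla V_\beta(x)||_2\le 2\max\{A,B\}\,e^{\theta F_\eta(x)}$. Since $||\nabla h(x)||_2=\lambda e^{-\lambda V_\beta(x)}||\nabla V_\beta(x)||_2$, I would finish with the estimate $V_\beta(x)\ge\eta^{2\beta}F_\eta(x)$ (because $||\phi_f(x,t)||_2\ge\eta$ for $t<F_\eta(x)$, exactly as in the proof of Lemma~\ref{lem: MLF}), so that the hypothesis $\lambda>\theta\eta^{-2\beta}$ yields $\lambda V_\beta(x)\ge\theta F_\eta(x)$ and hence $e^{\theta F_\eta(x)-\lambda V_\beta(x)}\le 1$; this collapses the bound to $||\nabla h(x)||_2\le 2\lambda\max\{A,B\}=K$, matching Eqn.~\eqref{eqn: lip constant of ZLF}.

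Finally, to pass from this pointwise gradient bound on $ROA_f$ to a global Lipschitz estimate I would avoid appealing to convexity of $ROA_f$ (which may fail) and instead argue along the straight segment $\gamma(s)=(1-s)x+sy$ for arbitrary $x,y\in\R^n$. On each connected component of $\{s\in[0,1]:\gamma(s)\in ROA_f\}$ the map $s\mapsto h(\gamma(s))$ is differentiable with derivative bounded by $K||x-y||_2$; at the endpoints of such components (which lie in $\partial ROA_f$) it vanishes by continuity, and it is identically $0$ off $ROA_f$. A short argument comparing values at the infimum and supremum of the excluded set on any subinterval then shows $s\mapsto h(\gamma(s))$ is $K||x-y||_2$-Lipschitz on $[0,1]$, giving $|h(x)-h(y)|\le K||x-y||_2$ and hence the claim for $W_{\lambda,\beta}=1-h$. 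I expect the main obstacle to be the justification of differentiation under the integral sign together with the local boundedness of $F_\eta$ on $ROA_f$ (needed so the tail estimate is locally uniform), and the careful handling of the boundary crossings in this last path argument; by contrast the decay and divergence estimates, and the way the two hypotheses on $\lambda$ and $\beta$ render the constants finite, are comparatively routine once these ingredients are secured.
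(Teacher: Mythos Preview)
Your proposal is correct and arrives at the same constant $K$, but the route differs from the paper's. The paper never differentiates $V_\beta$; instead it bounds the difference $|V_\beta(x)-V_\beta(y)|$ directly by writing $||\phi_f(x,t)||_2^{2\beta}-||\phi_f(y,t)||_2^{2\beta}$ as a telescoping product, applying Lemma~\ref{lem: exp divergence of solution maps} to the factor $||\phi_f(x,t)-\phi_f(y,t)||_2$, and splitting the time integral at $F_\eta(x)$ exactly as you do. This is combined with the elementary inequality $|1-e^{-u}|\le |u|$ to get $|W_{\lambda,\beta}(x)-W_{\lambda,\beta}(y)|\le \lambda e^{-\lambda V_\beta(x)}|V_\beta(x)-V_\beta(y)|$, and then the same estimate $V_\beta(x)\ge\eta^{2\beta}F_\eta(x)$ you use kills the $e^{\theta F_\eta(x)}$ growth. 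The extension to all of $\R^n$ is done by an explicit four-case analysis (both in $ROA_f$; one in, one out; both out), with the mixed case handled by picking a single point on the segment near the first boundary crossing and applying the interior estimate plus Corollary~\ref{cor: ZLF characterizes ROA}; the factor $2$ in $K$ arises from the triangle inequality at that step.

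Your approach trades the direct difference estimate for a pointwise gradient bound plus a one-shot path argument that absorbs all boundary crossings at once. This is conceptually tidier and yields the same $K$, but it carries the extra burden of justifying differentiation under the integral sign (and hence the local boundedness of $F_\eta$ on $ROA_f$), which the paper's finite-difference argument sidesteps entirely. The core analytic content---the split at $F_\eta(x)$, the $e^{\theta t}$ divergence bound, the $\mu\eta e^{-\delta t}$ decay, and the cancellation via $\lambda\eta^{2\beta}>\theta$---is identical in both proofs.
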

\begin{proof}
To prove $W_{\lambda,\beta} \in Lip(\R^n, \R)$ we will now show
\begin{align} \label{lip}
|W_{\lambda,\beta}(x) - W_{\lambda,\beta}(y)|<K ||x-y||_2 \text{ for all } x,y \in \R^n,
\end{align}
where $K>0$ is given in Eqn.~\eqref{eqn: lip constant of ZLF}.

	{\textbf{Case 1: $x,y \in ROA_f$.}} Since $B_\eta(0)$ is an exponentially stable set of the ODE~\eqref{eqn: ODE} and by applying a similar argument in the derivation of Eqn.~\eqref{pfeqn 1}, it follows that there exists $\delta,\mu>0$ such that
	\begin{align} \label{pfeq: 1}
	& ||\phi_f(x,t)||_2 \le \mu \eta e^{-\delta (t- F_\eta(x))}  \text{ for all } t>F_\eta(x),\\ \nonumber
	& ||\phi_f(y,t)||_2 \le \mu \eta e^{-\delta (t- F_\eta(x))} \text{ for all } t>F_\eta(y).
	\end{align}
	Without loss of generality we will assume $F_\eta(x) \ge F_\eta(y)$ (otherwise we can relabel $x$ and $y$).
	
	Now,
	\begin{align} \label{pfeqn:lip}
	& |W_{\lambda, \beta }(x) - W_{\lambda, \beta }(y)|= \bigg|\exp \left(- \lambda \int_{0}^\infty ||\phi_f(x,t)||_2^{2 \beta} dt \right)  \\ \nonumber
	& \hspace{3cm} -\exp \left(- \lambda \int_{0}^\infty ||\phi_f(y,t)||_2^{2 \beta} dt \right) \bigg| \\ \nonumber
	& = \bigg| \exp \left(- \lambda \int_{0}^\infty ||\phi_f(x,t)||_2^{2 \beta} dt \right) \bigg|  \\ \nonumber
	& \qquad \times \bigg| 1- \exp \left(- \lambda \int_{0}^\infty (||\phi_f(y,t)||_2^{2 \beta} - ||\phi_f(x,t)||_2^{2 \beta})  dt \right) \bigg |\\ \nonumber
	& \le \bigg| \exp \left(- \lambda \int_{0}^\infty ||\phi_f(x,t)||_2^{2 \beta} dt \right) \bigg|\\ \nonumber
	& \qquad \times \bigg| \lambda \int_{0}^\infty (||\phi_f(y,t)||_2^{2 \beta} - ||\phi_f(x,t)||_2^{2 \beta})  dt \bigg| \\ \nonumber
	& = \lambda  \exp \left(- \lambda V_\beta(x) \right)  \left|V_\beta(x) - V_\beta(y) \right|,
	\end{align}
	where the inequality in Eqn.~\eqref{pfeqn:lip} follows by the exponential inequality given in Eqn.~\eqref{ineq:exp 3} in Lemma~\ref{lem: exp ineq} and the function $V_\beta$ is as in Eqn.~\eqref{fun: MLF}.
	
%

		We first derive a bound for $|V_{\beta}(x)-V_{\beta}(y)|$.
	\begin{align} \label{pfeqn 4}
	&|V_{\beta}(x) - V_{\beta}(y)| = \left| \int_{0}^{\infty} ||\phi_f(x,t)||_2^{2 \beta} - ||\phi_f(y,t)||_2^{2 \beta}  dt \right|   \\ \nonumber
	 & \le  \int_{0}^{\infty} \bigg| ||\phi_f(x,t)||_2 - ||\phi_f(y,t)||_2 \bigg| \\ \nonumber
	& \qquad \qquad \times \left( \sum_{i=0}^{2 \beta-1} ||\phi_f(x,t)||^i_2 ||\phi_f(y,t)||^{2 \beta - 1 -i}_2  \right) dt  \\ \nonumber
	& \le \int_{0}^{F_\eta(x)} \bigg| ||\phi_f(x,t) - \phi_f(y,t)||_2 \bigg| \\ \nonumber
	& \qquad \qquad \times \left( \sum_{i=0}^{2 \beta-1} ||\phi_f(x,t)||^i_2 ||\phi_f(y,t)||^{2 \beta - 1 -i}_2  \right) dt\\ \nonumber
	& \qquad + \int_{F_\eta(x)}^{\infty} \bigg| ||\phi_f(x,t) - \phi_f(y,t)||_2 \bigg| \\ \nonumber
	& \qquad \qquad \qquad \times \left( \sum_{i=0}^{2 \beta-1} ||\phi_f(x,t)||^i_2 ||\phi_f(y,t)||^{2 \beta - 1 -i}_2  \right) dt.
	\end{align}
	We now derive a bound for the two terms that appear in the right hand side of Eqn.~\eqref{pfeqn 4}. Using the fact $||\phi_f(x,t)||_2 <R$ and $||\phi_f(y,t)||_2 <R$ since $ROA_f \subset B_R(0)$, and using Lemma~\ref{lem: exp divergence of solution maps}, we get,
	\begin{align} \label{pfeqn 5}
&\int_{0}^{F_\eta(x)}  ||\phi_f(x,t) - \phi_f(y,t)||_2  \\ \nonumber
& \qquad \qquad \times \left( \sum_{i=0}^{2 \beta-1} ||\phi_f(x,t)||^i_2 ||\phi_f(y,t)||^{2 \beta - 1 -i}_2  \right) dt \\ \nonumber
& \le 2 \beta R^{2 \beta -1} \int_{0}^{F_\eta(x)}  ||\phi_f(x,t) - \phi_f(y,t)||_2  dt \\ \nonumber
& \le 2 \beta R^{2 \beta -1} ||x-y||_2 \int_{0}^{F_\eta(x)} e^{\theta t} dt\\ \nonumber
& = \frac{2 \beta R^{2 \beta -1}}{\theta} \left( e^{\theta F_\eta(x)} -1 \right) ||x-y||_2.
	\end{align}

Moreover, since $ \beta>\frac{\theta}{2\delta} +\frac{1}{2}$ it also follows using Eqn.~\eqref{pfeq: 1}, and Lemma~\ref{lem: exp divergence of solution maps}, that
		\begin{align} \label{pfeqn 6}
	&\int_{F_\eta(x)}^\infty  ||\phi_f(x,t) - \phi_f(y,t)||_2  \\ \nonumber
	& \qquad \qquad \times \left( \sum_{i=0}^{2 \beta-1} ||\phi_f(x,t)||^i_2 ||\phi_f(y,t)||^{2 \beta - 1 -i}_2  \right) dt \\ \nonumber
	& \le 2 \beta (\mu \eta)^{2 \beta -1} e^{\delta (2 \beta -1) F_\eta(x)} ||x-y||_2\\ \nonumber
	& \hspace{3cm} \times \int_{F_\eta(x)}^\infty e^{\theta t- \delta(2 \beta -1) t} dt\\ \nonumber
	& = \frac{2 \beta (\mu \eta)^{2 \beta -1}}{\delta(2\beta -1) - \theta} e^{\theta F_\eta(x)} ||x-y||_2.
	\end{align}
	Now, combining Eqns~\eqref{pfeqn 4}, \eqref{pfeqn 5} and \eqref{pfeqn 6} we get,
		\begin{align} \label{pfeqn 10}
	&|V_{\beta}(x) - V_{\beta}(y)| \\ \nonumber
	&\le \max \left\{\frac{2 \beta R^{2 \beta -1}}{\theta}, \frac{2 \beta (\mu \eta)^{2 \beta -1}}{\delta(2\beta -1) - \theta} \right\} e^{\theta F_\eta(x)}   ||x-y||_2.
	\end{align}
	We next derive a bound for the $\exp \left(- \lambda V_\beta(x) \right) $ term in Eqn.~\eqref{pfeqn:lip}.
	\begin{align} \label{pfeqn 11}
& \exp \left(- \lambda V_\beta(x) \right)  = \exp \left(- \lambda \int_0^\infty ||\phi_f(x,t)||_2^{2 \beta} dt \right)\\ \nonumber
& \qquad \le \exp \left(- \lambda \int_0^{F_\eta(x)} ||\phi_f(x,t)||_2^{2 \beta} dt \right) \le e^{- \lambda F_\eta(x) \eta^{2 \beta} }.
	\end{align}
	Finally combining Eqns~\eqref{pfeqn:lip}, \eqref{pfeqn 10}, and \eqref{pfeqn 11}, and using the fact $\lambda> \theta \eta^{-2 \beta}$, we get
	\begin{align*}
& |W_{\lambda, \beta }(x) - W_{\lambda, \beta }(y)| \\
& \le \lambda \max \left\{\frac{2 \beta R^{2 \beta -1}}{\theta}, \frac{2 \beta (\mu \eta)^{2 \beta -1}}{\delta(2\beta -1) - \theta} \right\} \\
& \qquad \qquad \qquad \qquad \qquad \qquad \times e^{ - (\lambda  \eta^{2 \beta} -\theta) F_\eta(x)}   ||x-y||_2 \\
& \le \frac{K}{2} ||x-y||_2,
	\end{align*}
	showing Eqn.~\eqref{lip} holds when $x,y \in ROA_f$.

{\textbf{Case 2: $x \in ROA_f$ and $y \notin ROA_f$.}} Let us consider the set $\{z_\beta\}_{\beta \in [0,1]}\subset \R^n$ where for $\beta \in [0,1]$ we have that $z_\beta:= (1-\beta)x + \beta y$. Now, since $x \in ROA_f$ and $ROA_f$ is open (by Lemma~\ref{lem: ROA open}) it follows there exists $\eps>0$ such that $B_\eps(x) \subset ROA_f$. Therefore, since $||z_\beta - x||= |\beta| ||x - y||_2$, it follows $z_\beta   \in ROA_f$ for all $\beta \in [0,\eps/||x-y||_2)$. Thus $\sigma:=\sup\{\beta: z_\beta \in ROA_f \}\ge \eps/||x-y||_2>0$. Moreover, $\sigma\le 1$ as $z_1= y \notin ROA_f$.

Consider $a_n:= \sigma (1- 1/n)$ and denote the sequence of points $w_n:=z_{a_n}$. It follows $\{w_n\}_{n \in \N} \subset ROA_f$ and $w^*:= \lim_{n \to \infty} w_n \in \partial ROA_f$. By Lemma~\ref{lem: MLF} we have that $\lim_{n \to \infty}V_\beta(w_n)=\infty$. Therefore there exists $N \in \N$ such that
\begin{align} \label{11}
\exp(-\lambda V_\beta(w_n)) < \frac{K}{2} ||x-y||_2 \text{ for all } n >N.
\end{align}
Moreover, since $y \notin ROA_f$ we have that $W_{\lambda, \beta}(y)=1$. Thus by Eqn.~\eqref{11} we have that
\begin{align} \label{111}
& | W_{\lambda, \beta}(w_n) - W_{\lambda, \beta}(y)| = |1- \exp(-\lambda V_\beta(w_n)) -1|\\ \nonumber
& \qquad = \exp(-\lambda V_\beta(w_n)) \le \frac{K}{2} ||x-y||_2 \text{ for all } n >N.
\end{align}

Furthermore, for any $n>N$ we have that $w_n \in ROA_f$ and $x \in ROA_f$ and thus Case~1 shows that \begin{align} \label{12}
|W_{\lambda, \beta}(x) - W_{\lambda, \beta}(w_n)|<\frac{K}{2} ||x-w_n||_2.
\end{align}
Thus, by Eqns~\eqref{111} and \eqref{12} and selecting any $n >N$, it now follows that
\begin{align*}
|W_{\lambda, \beta}(x) & - W_{\lambda, \beta}(y)| \\
&=|W_{\lambda, \beta}(x) - W_{\lambda, \beta}(w_n)| + | W_{\lambda, \beta}(w_n) - W_{\lambda, \beta}(y)|\\
& \le \frac{K}{2} ||x- w_n||_2 + \exp(-\lambda V(w_n))\\
& \le \frac{K}{2} \sigma \left(1 - \frac{1}{n} \right)  ||x-y||_2 + \frac{K}{2} ||x-y||_2\\
& \le K ||x-y||_2,
\end{align*}
where the third inequality follows since $\sigma(1 - \frac{1}{n})<1$ for all $n \in \N$. Therefore, Eqn.~\eqref{lip} holds when $x \in ROA_f$ and $y \notin ROA_f$.

{\textbf{Case 3: $y \in ROA_f$ and $x \notin ROA_f$.}} It follows by a similar argument to Case 2 that
\begin{align*}
|W_{\lambda,\beta}(x) - W_{\lambda,\beta}(y)| \le K||x-y||_2,
\end{align*}
and thus Eqn.~\eqref{lip} holds when $y \in ROA_f$ and $x \notin ROA_f$.

{\textbf{Case 4: $x,y \notin ROA_f$.}} We have that $W_{\lambda,\beta}(x)=W_{\lambda,\beta}(y)=1$ for all $x,y \notin ROA_f$ and thus,
\begin{align*}
|W_{\lambda,\beta}(x) - W_{\lambda,\beta}(y)|= 0 \le K||x-y||_2,
\end{align*} and thus Eqn.~\eqref{lip} holds when $x,y \notin ROA_f$. \end{proof}

\subsection{The Converse Lyapunov Function Satisfies a PDE}
Proposition~\ref{prop: ZLF is lip} shows $W_{\lambda,\beta}$ is a Lipschitz continuous function when $\lambda>0$ and $\beta \in \N$ are sufficiently large. Rademacher's Theorem (Theorem~\ref{thm: Rademacher theorem} found in Appendix~\ref{sec: appendix}) shows that Lipschitz continuous functions are differentiable almost everywhere. Therefore, $W_{\lambda,\beta}$ must satisfy some Partial Differential Equation (PDE) almost everywhere. We next derive this PDE by showing $W_{\lambda,\beta}$ satisfies Eqn.~\eqref{PDE: ZLF}.
\begin{prop} \label{prop: ZLF PDE}
	Consider $f \in C^2(\R^n, \R)$ and $W$ as in Eqn.~\eqref{fun: ZLF}. Suppose there exists $\theta,\eta,R>0$ such that $||D^\alpha f(x)||_2<\theta$ for all $x \in B_R(x)$ and $||\alpha||_1 \le 2$, $B_\eta(0)$ is an exponentially stable set (Defn.~\ref{defn: asym and exp stab}) of the ODE~\eqref{eqn: ODE}, and $ROA_f \subset B_R(0)$. If $\lambda > \theta \eta^{-2 \beta}$ and $ \beta>\frac{\theta}{2\delta} +\frac{1}{2}$ then
	\begin{align} \label{PDE: ZLF}
	\nabla W_{\lambda, \beta}(x)^T f(x) = -\lambda ||x||_2^{2 \beta} & (1 - W_{\lambda, \beta}(x))\\ \nonumber
	&  \text{ for almost every } x \in \R^n.
	\end{align}
\end{prop}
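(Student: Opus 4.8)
The plan is to verify Eqn.~\eqref{PDE: ZLF} pointwise at every point where $W_{\lambda,\beta}$ is differentiable, which is a set of full Lebesgue measure: global Lipschitz continuity is supplied by Prop.~\ref{prop: ZLF is lip} (whose hypotheses coincide with those assumed here), and Rademacher's Theorem (Thm~\ref{thm: Rademacher theorem}) then guarantees differentiability almost everywhere. Denote this differentiability set by $E$. Since by Cor.~\ref{cor: ZLF characterizes ROA} we have $ROA_f = \{x : W_{\lambda,\beta}(x) < 1\}$, which is open by Lemma~\ref{lem: ROA open}, I would split $E$ into $E \cap ROA_f$ and $E \setminus ROA_f = E \cap \{x : W_{\lambda,\beta}(x) = 1\}$ and treat each region separately.

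On $E \cap ROA_f$ we have $W_{\lambda,\beta}(x) = 1 - e^{-\lambda V_\beta(x)}$ with $V_\beta$ as in Eqn.~\eqref{fun: MLF}. Because $W_{\lambda,\beta}(x) < 1$ and the map $w \mapsto -\frac{1}{\lambda}\ln(1-w)$ is smooth near $w = W_{\lambda,\beta}(x)$, the function $V_\beta$ is differentiable at $x$ with $\nabla W_{\lambda,\beta}(x) = \lambda(1 - W_{\lambda,\beta}(x))\nabla V_\beta(x)$. The crux is the infinitesimal invariance identity $\nabla V_\beta(x)^T f(x) = -||x||_2^{2\beta}$, which I would obtain by computing $\frac{d}{ds}V_\beta(\phi_f(x,s))\big|_{s=0}$ in two ways. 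Using the semigroup property in Eqn.~\eqref{soln map property} and the change of variables $\tau = s + t$ gives the representation $V_\beta(\phi_f(x,s)) = \int_s^\infty ||\phi_f(x,\tau)||_2^{2\beta}\,d\tau$; since the integrand is continuous in $\tau$ and the integral converges for $x \in ROA_f$ (Lemma~\ref{lem: MLF}), the fundamental theorem of calculus yields the value $-||x||_2^{2\beta}$ at $s=0$. On the other hand, since $\phi_f$ is $C^1$ (Lemma~\ref{lem: diff soln map}) the curve $s \mapsto \phi_f(x,s)$ has velocity $f(x)$ at $s=0$, so the chain rule applied to the (Fréchet) differentiable $V_\beta$ gives the same derivative as $\nabla V_\beta(x)^T f(x)$. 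Equating the two expressions and substituting into $\nabla W_{\lambda,\beta} = \lambda(1 - W_{\lambda,\beta})\nabla V_\beta$ produces exactly Eqn.~\eqref{PDE: ZLF}.

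On $E \setminus ROA_f$, which by Cor.~\ref{cor: ZLF characterizes ROA} is precisely the level set $\{x : W_{\lambda,\beta}(x) = 1\}$, the function $W_{\lambda,\beta}$ is constant. Here I would invoke the standard real-analysis fact that the gradient of a Lipschitz (indeed $W^{1,1}_{\mathrm{loc}}$) function vanishes almost everywhere on any of its level sets; hence $\nabla W_{\lambda,\beta}(x) = 0$ for almost every such $x$. Then the left-hand side of Eqn.~\eqref{PDE: ZLF} is $0$, while the right-hand side is $-\lambda ||x||_2^{2\beta}(1 - 1) = 0$, so the identity holds. Combining the two regions, Eqn.~\eqref{PDE: ZLF} holds for almost every $x \in \R^n$.

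I expect the main obstacle to be the region outside $ROA_f$, and in particular the boundary $\partial ROA_f$. It is tempting to argue only that $W_{\lambda,\beta} \equiv 1$ is locally constant off the closure of $ROA_f$, forcing $\nabla W_{\lambda,\beta} = 0$ there; but this says nothing on $\partial ROA_f$, which need not have Lebesgue measure zero, and on which $W_{\lambda,\beta}$ is not locally constant. The clean way around this is the level-set gradient fact quoted above, which handles the entire complement of $ROA_f$ at once with no regularity assumption on $\partial ROA_f$. A secondary care-point is rigorously justifying that the two computations of $\frac{d}{ds}V_\beta(\phi_f(x,s))\big|_{s=0}$ coincide: this relies on having genuine (Fréchet) differentiability of $V_\beta$ at $x$, furnished by Rademacher's Theorem, rather than merely the existence of a directional derivative.
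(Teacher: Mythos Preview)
Your proposal is correct and shares the same backbone as the paper's proof: invoke Prop.~\ref{prop: ZLF is lip} and Rademacher's Theorem to obtain a.e.\ differentiability, then split into $ROA_f$ and its complement and compute the derivative along the flow on $ROA_f$. The differences are minor but worth noting.

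On $ROA_f$, the paper differentiates $t \mapsto W_{\lambda,\beta}(\phi_f(x,t))$ directly (your Eqn.~\eqref{pfe: W comp phi}) rather than factoring through $V_\beta$; this is only a cosmetic repackaging of the same calculation. The real distinction is outside $ROA_f$. You appeal to the Sobolev level-set fact that $\nabla W_{\lambda,\beta}=0$ a.e.\ on $\{W_{\lambda,\beta}=1\}$. The paper instead uses forward flow-invariance of $\R^n\setminus ROA_f$: if $x\notin ROA_f$ then $\phi_f(x,t)\notin ROA_f$ for all $t\ge 0$, hence $W_{\lambda,\beta}(\phi_f(x,t))\equiv 1$, whose $t$-derivative at $0$ is $0$; combined with the chain rule at any point of differentiability this gives $\nabla W_{\lambda,\beta}(x)^T f(x)=0$, and the right-hand side is $-\lambda\|x\|_2^{2\beta}(1-1)=0$. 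This dynamical argument automatically covers $\partial ROA_f$ with no measure-theoretic assumption on it, so the obstacle you flagged does not arise in the paper's route. Your level-set argument is a perfectly valid alternative and avoids discussing the flow off $ROA_f$ altogether, at the cost of importing an external Sobolev fact; the paper's argument is slightly more elementary and entirely self-contained.
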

\begin{proof}
	By Prop.~\ref{prop: ZLF is lip} we have that $W_{\lambda, \beta} \in Lip(\R^n, \R)$. Therefore by Rademacher's Theorem (stated in Theorem~\ref{thm: Rademacher theorem} and found in Appendix~\ref{sec: appendix}) $W_{\lambda, \beta}$ is differentiable almost everywhere. Moreover, $\phi_f$ is differentiable by Lemma~\ref{lem: diff soln map}. Since the composition of differentiable functions is itself differentiable it follows by the chain rule that,
	\begin{align} \label{pe 1}
	\frac{d}{dt} W_{\lambda, \beta}(\phi_f(x,t)) & \bigg|_{t=0} = \nabla W_{\lambda, \beta}(x)^T \frac{\partial}{\partial t} \phi_f(x,t) \bigg|_{t=0} \\
	\nonumber
	&= \nabla W_{\lambda, \beta}(x)^Tf(x)
	\text{ for almost every }  x \in \R^n.
	\end{align}
	On the other hand, if $x \in ROA_f$ it follows $\phi_f(x,t) \in ROA_f$ for all $t \ge 0$ and thus,
	\begin{align} \label{pfe: W comp phi}
	W_{\lambda, \beta}(\phi_f(x,t))=1 - \exp \left(-\lambda \int_{t}^\infty ||\phi_f(x,s)||_2^{2 \beta} ds \right).
	\end{align}
	By the fundamental theorem of calculus and the fact $\phi_f(x,0)=x$ for all $x \in \R^n$ we have that,
	\begin{align} \nonumber
	&\frac{d}{dt} W_{\lambda, \beta}(\phi_f(x,t)) \bigg|_{t=0} \hspace{-0.5cm} \\ \nonumber
	&= -\lambda ||\phi_f(x,t)||_2^{2 \beta} \exp \left(-\lambda \int_{t}^\infty ||\phi_f(x,s)||_2^{2 \beta} ds \right) \bigg|_{t=0}\\ \label{pe 2}
	&  =  -\lambda ||x||_2^{2 \beta}(1- W_{\lambda, \beta}(x)) \text{ for } x \in ROA_f.
	\end{align}
	If $x \notin ROA_f$ then clearly $\phi_f(x,t) \notin ROA_f$ for all $t \ge 0$. Thus $W(\phi_f(x,t))= 1$ for all $x \notin ROA_f$ and $t \ge 0$. Therefore,
	\begin{align} \label{pe 3}
	&\frac{d}{dt} W_{\lambda, \beta}(\phi_f(x,t)) \bigg|_{t=0} \hspace{-0.25cm} = \frac{d}{dt} 1 \bigg|_{t=0} \hspace{-0.25cm} = 0 = -\lambda ||x||_2^{2 \beta}(1-1)  \\ \nonumber
	&  \qquad = -\lambda ||x||_2^{2 \beta}(1-W_{\lambda, \beta}(x)) \text{ for } x \notin ROA_f.
	\end{align}
	Hence, Eqns~\eqref{pe 1}, \eqref{pe 2} and \eqref{pe 3} prove that the PDE given in Eqn.~\eqref{PDE: ZLF} holds. \end{proof}

\section{A Convex Optimization Problem for Approximating the Converse Lyapunov Function} \label{sec: bounding LF from above}
We have reduced the problem of approximating the region of attraction to solving the optimization problem given in Eqn.~\eqref{opt: algebriac ROA}, where $W=W_{\lambda,\beta}$ is given in Eqn.~\eqref{fun: ZLF}. Unfortunately, no analytical formula for $W_{\lambda,\beta}$ is known. Therefore, the optimization problem given in Eqn.~\eqref{opt: algebriac ROA} cannot be solved in its current form.

Fortunately, the unknown function $W_{\lambda,\beta}$ can be removed from the objective function of Opt.~\eqref{opt: algebriac ROA}. To see this note that if $J(x) \ge W_{\lambda,\beta}(x)$ for all $x \in \Lambda \subseteq \Omega$, then minimizing $\int_{\Lambda} |J(x)-W_{\lambda,\beta}(x)| dx$ is equivalent to minimizing $\int_{\Lambda} J(x) dx$. Thus, Opt.~\eqref{opt: algebriac ROA} is equivalent to the following optimization problem,
\vspace{-0.1cm}
\begin{align} \label{opt: intermediate}
& P_d\in \arg \min_{J \in \mcl P_d(\R^n,\R)} \int_{\Lambda} J(x) dx\\ \nonumber
& \text{ such that } J(x) \ge W_{\lambda,\beta}(x) \text{ for all } x \in \Omega.
\end{align}

Unfortunately, the constraint of Opt.~\eqref{opt: intermediate} still involves the unknown function $W_{\lambda,\beta}$. In the absence of an analytical formula for $W_{\lambda,\beta}$ we propose in Prop.~\ref{prop: diss ineq gives super LF} conditions, in the form of the linear partial differential inequalities given in Eqns~\eqref{eqn: diss }, \eqref{eqn: diss 2 } and \eqref{eqn: diss 3}, that when satisfied by some function $J\in C^1(\R^n, \R)$ implies that $W_{\lambda,\beta}(x) \le J(x)$. Thus, any $J$ satisfying Eqns~\eqref{eqn: diss }, \eqref{eqn: diss 2 } and \eqref{eqn: diss 3} is feasible to Opt.~\eqref{opt: intermediate}. 

\subsection{Bounding The Converse Lyapunov Function From Above}
\begin{prop} \label{prop: diss ineq gives super LF}
	Consider $f \in C^1(\R^n, \R)$, $\beta \in \N$ and $\lambda>0$. Suppose there exists $J \in C^1(\Omega, \R)$ that satisfies
	\begin{align} \label{eqn: diss }
	& \nabla J(x)^T f(x) \le  -\lambda||x||_2^{2 \beta} (1-J(x)) \text{ for all } x \in \Omega, \\
	& J(x) \ge 1 \text{ for all } x \in \partial \Omega, \label{eqn: diss 2 }\\
	& J(0) \ge 0, \label{eqn: diss 3}
	\end{align}
	where $\Omega \subset \R^n$ is a compact set. Then $W_{\lambda, \beta}(x) \le J(x)$ for all $x \in \Omega$, where $W_{\lambda, \beta}$ is as in Eqn.~\eqref{fun: ZLF}.
\end{prop}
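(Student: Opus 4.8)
The plan is to prove the bound by a comparison principle applied along solution trajectories, exploiting that $J$ is genuinely $C^1$ (so, unlike $W_{\lambda,\beta}$, everything stays classical along each trajectory). Fix $x \in \Omega$ and write $\phi(t):=\phi_f(x,t)$; since $\phi(\cdot)$ solves the ODE it is $C^1$ in $t$, hence $t \mapsto J(\phi(t))$ is $C^1$ by the chain rule. I introduce the integrating factor $\mu(t):=\exp\!\big(-\lambda\int_0^t ||\phi(s)||_2^{2\beta}\,ds\big) \in (0,1]$ and set $P(t):=\mu(t)\,(1 - J(\phi(t)))$. A direct computation gives $P'(t) = -\mu(t)\big[\nabla J(\phi(t))^T f(\phi(t)) + \lambda ||\phi(t)||_2^{2\beta}(1 - J(\phi(t)))\big]$, and whenever $\phi(t)\in\Omega$ the differential inequality~\eqref{eqn: diss } forces the bracket to be nonpositive, so $P'(t)\ge 0$. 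Thus $P$ is nondecreasing on any interval on which the trajectory stays in $\Omega$, which yields the master inequality
\[
1 - J(x) = P(0) \le \mu(t)\,(1 - J(\phi(t))) \qquad (\star)
\]
for every such $t$.

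First I would treat trajectories that reach the boundary. Let $\tau := \inf\{t \ge 0 : \phi(t) \notin \Omega\}$. If $\tau<\infty$, then by continuity and compactness $\phi(\tau)\in\partial\Omega$, so~\eqref{eqn: diss 2 } gives $J(\phi(\tau))\ge 1$, i.e. $1-J(\phi(\tau))\le 0$; evaluating $(\star)$ at $t=\tau$ (valid since $\phi([0,\tau])\subset\Omega$) and using $\mu(\tau)>0$ yields $1-J(x)\le 0$, hence $J(x)\ge 1 \ge W_{\lambda,\beta}(x)$. This settles every $x$ whose trajectory ever meets $\partial\Omega$.

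It then remains to handle $\tau=\infty$, where the trajectory stays in the compact set $\Omega$ for all time and I take $t\to\infty$ in $(\star)$. If $x\in ROA_f$, then $\phi(t)\to 0$ and, by Lemma~\ref{lem: MLF}, $V_\beta(x)=\int_0^\infty ||\phi(s)||_2^{2\beta}\,ds<\infty$; continuity of $J$ at $0\in\Omega$ together with $J(0)\ge 0$ from~\eqref{eqn: diss 3} gives $\lim_{t\to\infty}P(t)=e^{-\lambda V_\beta(x)}(1-J(0)) \le e^{-\lambda V_\beta(x)} = 1 - W_{\lambda,\beta}(x)$, so $(\star)$ forces $J(x)\ge W_{\lambda,\beta}(x)$. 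If instead $x\notin ROA_f$, then $W_{\lambda,\beta}(x)=1$ and Lemma~\ref{lem: MLF} applies: the trajectory never enters the exponentially stable ball $B_\eta(0)$ (otherwise it would converge to the origin, contradicting $x\notin ROA_f$), so $||\phi(s)||_2\ge\eta$ and $V_\beta(x)=\infty$, giving $\mu(t)\to 0$; since $J$ is bounded on the compact set $\Omega$, say $|1-J(y)|\le M$, we obtain $1-J(x)\le \mu(t)M \to 0$, whence $J(x)\ge 1 = W_{\lambda,\beta}(x)$.

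The sign of $P'$ is immediate from~\eqref{eqn: diss }, so the main obstacle is the bookkeeping of trajectories relative to $\Omega$: one must apply the differential inequality only while $\phi(t)\in\Omega$, correctly identify $\phi(\tau)\in\partial\Omega$ at the first exit, and, in the non-exiting non-attracted case, justify $V_\beta(x)=\infty$ (which rests on the standing exponential-stability assumptions underlying Lemma~\ref{lem: MLF}) in order to force $\mu(t)\to 0$. A minor point worth flagging is that the $x\in ROA_f$, $\tau=\infty$ case tacitly uses $0\in\Omega$, which holds because $0=\lim_t\phi(t)$ is a limit of points of the closed set $\Omega$.
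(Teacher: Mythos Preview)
Your proof is correct and follows essentially the same route as the paper's: the paper applies Gronwall's inequality (Lemma~\ref{lem: gronwall}) to $u(t):=J(\phi_f(x,t))-1$ to obtain exactly your master inequality $(\star)$, and then carries out the identical three-case split (finite exit time; $\tau=\infty$ with $x\in ROA_f$; $\tau=\infty$ with $x\notin ROA_f$). Your integrating-factor computation for $P(t)$ is just the direct verification of the Gronwall step, and your justification that $\mu(t)\to 0$ when $x\notin ROA_f$ (via the trajectory avoiding a neighborhood of the origin) is the same as the paper's, which argues $\phi_f(x,t)\notin ROA_f$ and uses that $ROA_f$ is open and contains $0$.
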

\begin{proof}
	Consider $x \in \Omega$. Let us consider the time the solution map exits the set $\Omega\subset \R^n$, denoted by ${T_x}:= \sup\{t \ge 0: \phi_f(x,t) \in \Omega \}$. Furthermore, let us denote $u(t):= J(\phi_f(x,t))-1$ and $\alpha(t):=\lambda ||\phi_f(x,t)||_2^{2 \beta}$. It follows from Eqn.~\eqref{eqn: diss } that
	\begin{align*}
	\frac{d}{dt}u(t) \le \alpha(t) u(t) \text{ for all } t \in [0,T_x].
	\end{align*}
	Therefore by Lemma~\ref{lem: gronwall} it follows that
	\begin{align*}
	u(t) \le u(0) \exp\left(\int_0^t \alpha(s) ds\right) \text{ for all } t \in [0,T_x],
	\end{align*}
	and thus selecting $t =T_x$ we have that
	\begin{align} \label{1}
	J(\phi_f(x,T_x))-1 \le (J(x)-1) \exp\left(\lambda \int_0^{T_x} ||\phi_f(x,s)||_2^{2 \beta} ds\right).
	\end{align}
	By rearranging Eqn.~\eqref{1} we get that,
	\begin{align} \label{2}
	J(x) & \ge 1 - (1-J(\phi_f(x,T_x)))\exp\left(- \lambda \int_0^{T_x} ||\phi_f(x,s)||_2^{2 \beta} ds\right) .
	\end{align}
	\textbf{Case 1: $T_x< \infty$.} In this case the solution map exits the set $\Omega$ in some finite time. Since $\phi_f \in C(\R^n \times [0,\infty), \R^n)$ (by Lemma~\ref{lem: diff soln map}) it is clear that $\phi_f(x,T_x) \in \partial \Omega$. Therefore by Eqn.~\eqref{eqn: diss 2 } we have that $J(\phi_f(x,T_x)) \ge 1$. Hence, $(1-J(\phi_f(x,T_x)))\exp\left(- \lambda \int_0^{T_x} ||\phi_f(x,s)||_2^{2 \beta} ds\right) \le 0$. Thus, by Eqn.~\eqref{2} we have that,
	\begin{align*}
	J(x) & \ge 1 - (1-J(\phi_f(x,T_x)))\exp\left(- \lambda \int_0^{T_x} ||\phi_f(x,s)||_2^{2 \beta} ds\right) \\
	& \ge 1 \ge W_{\lambda,\beta}(x),
	\end{align*}
	{since $W_{\lambda,\beta}(x) \le 1$}.
	
	\textbf{Case 2a: $T_x = \infty$ and $x \in ROA_f$.} In this case we have $\lim_{t \to \infty} ||\phi_f(x,t)||_2=||\phi_f(x,T_x)||_2=0$ since $x \in ROA_f$. Moreover, since $J(\phi_f(x,T_x))=J(0) \ge 0$ (by Eqn.~\eqref{eqn: diss 3}) and $\exp(x) \ge 0$ for all $x \in \R$ it follows from Eqn.~\eqref{2} that
	\begin{align*}
	J(x) & \ge 1 - (1-J(0))\exp\left(-\lambda \int_0^{\infty} ||\phi_f(x,s)||_2^{2 \beta} ds\right)\\
	& \ge 1 - \exp\left(- \lambda \int_0^{\infty} ||\phi_f(x,s)||_2^{2 \beta} ds\right) = W_{\lambda,\beta}(x).
	\end{align*}
	\textbf{Case 2b: $T_x=\infty$ and $x \in \Omega/ ROA_f$}. If $x \in \Omega/ ROA_f$ we have that $W(x)=1$. Moreover, if $T_x= \infty$ then the solution map never exits the set $\Omega$, that is $\phi_f(x,t) \in \Omega$ for all $t \ge 0$. Since $J$ is differentiable and $\Omega$ is compact we have that $J$ is bounded, that is, there exists $M>0$ such that $|J(\phi_f(x,t))|<M$ for all $t \ge 0$. Since $x \notin ROA_f$ we have that $\phi_f(x,t) \notin ROA_f$ for all $t \ge 0$. This there exists $\eps>0$ such that $||\phi_f(x,t)||_2^{2 \beta} \ge \eps^{2 \beta} $ for all $t \ge 0$. Thus, since $|J(\phi_f(x,t))|<M$ for all $t \ge 0$, we have that
	\begin{align*}
	& \left| (1-J(\phi_f(x,T_x)))\exp\left(- \lambda \int_0^{T_x} ||\phi_f(x,s)||_2^{2 \beta} ds\right) \right|\\
	& = \lim_{T \to \infty} \left| \bigg(1-J(\phi_f(x,T)) \bigg) \exp\left(- \lambda \int_0^{T} ||\phi_f(x,s)||_2^{2 \beta} ds\right) \right| \\
		& = \lim_{T \to \infty} \left| 1-J(\phi_f(x,T)) \right| \exp\left(- \lambda \int_0^{T} ||\phi_f(x,s)||_2^{2 \beta} ds\right)  \\
	&\le \lim_{T \to \infty} \bigg\{(M+1)\exp\left(-T \lambda \eps^{2 \beta} \right) \bigg\}=0,
	\end{align*}
	implying \hspace{-0.05cm}$(1- \hspace{-0.05cm}J(\phi_f(x,T_x)))\exp\left( \hspace{-0.05cm}- \lambda \int_0^{T_x} ||\phi_f(x,s)||_2^{2 \beta} \hspace{-0.05cm} ds\right) \hspace{-0.05cm}= \hspace{-0.05cm}0$.
	
	It is now clear by Eqn.~\eqref{2} that
	\begin{align*}
	J(x) & \ge 1 - (1-J(\phi_f(x,T_x)))\exp\left(-\lambda \int_0^{T_x} ||\phi_f(x,s)||_2^{2 \beta} ds\right)\\
	& \ge 1 = W_{\lambda, \beta}(x).
	\end{align*}
\end{proof}

\begin{cor} \label{cor: diss ineq imply positivity}
	Consider $f \in C^1(\R^n, \R)$, $\beta \in \N$ and $\lambda>0$. Suppose there exists $J \in C^1(\Omega, \R)$ that satisfies Eqns~\eqref{eqn: diss }, \eqref{eqn: diss 2 } and \eqref{eqn: diss 3} for some compact set $\Omega$. Then $J(x) \ge 0$ for all $x \in \Omega$.
\end{cor}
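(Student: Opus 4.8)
The plan is to reduce this corollary directly to the immediately preceding Proposition~\ref{prop: diss ineq gives super LF}. The hypotheses of the corollary—namely $f \in C^1(\R^n,\R)$, $\beta \in \N$, $\lambda > 0$, and the existence of $J \in C^1(\Omega,\R)$ satisfying Eqns~\eqref{eqn: diss }, \eqref{eqn: diss 2 } and \eqref{eqn: diss 3} on a compact set $\Omega$—are exactly those of Proposition~\ref{prop: diss ineq gives super LF}. I may therefore invoke that proposition at once to conclude $W_{\lambda,\beta}(x) \le J(x)$ for all $x \in \Omega$, where $W_{\lambda,\beta}$ is the function defined in Eqn.~\eqref{fun: ZLF}. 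It then remains only to observe that $W_{\lambda,\beta}$ is nonnegative everywhere, after which combining the two facts yields $J(x) \ge W_{\lambda,\beta}(x) \ge 0$ on $\Omega$.

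The single substantive verification is thus the nonnegativity of $W_{\lambda,\beta}$, which I would establish by splitting on membership in $ROA_f$. For $x \notin ROA_f$, the definition in Eqn.~\eqref{fun: ZLF} gives $W_{\lambda,\beta}(x) = 1 \ge 0$ directly. For $x \in ROA_f$, I have $W_{\lambda,\beta}(x) = 1 - \exp(-\lambda V_\beta(x))$ with $V_\beta(x) = \int_0^\infty \|\phi_f(x,t)\|_2^{2\beta}\,dt \ge 0$; since $\lambda > 0$, the exponent is nonpositive, so $\exp(-\lambda V_\beta(x)) \in (0,1]$ and hence $W_{\lambda,\beta}(x) \in [0,1)$. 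Either way $W_{\lambda,\beta}(x) \ge 0$.

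There is essentially no obstacle in this argument: the entire analytic content is already carried by Proposition~\ref{prop: diss ineq gives super LF}, and the one remaining ingredient—nonnegativity of $W_{\lambda,\beta}$—is immediate from its definition. The reason for recording it separately as a corollary is presumably to emphasize that any $J$ certified feasible for the dissipation-type inequalities \eqref{eqn: diss }--\eqref{eqn: diss 3} is automatically a nonnegative function on $\Omega$, a structural property that becomes convenient once these inequalities are tightened to the SOS optimization problem of Section~\ref{sec: SOS opt}.
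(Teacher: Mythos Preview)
Your proposal is correct and matches the paper's own proof essentially verbatim: the paper simply invokes Proposition~\ref{prop: diss ineq gives super LF} to obtain $J(x)\ge W_{\lambda,\beta}(x)\ge 0$, with the nonnegativity of $W_{\lambda,\beta}$ taken as immediate from its definition in Eqn.~\eqref{fun: ZLF}.
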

\begin{proof}
	By Prop.~\ref{prop: diss ineq gives super LF} we have that $J(x) \ge W_{\lambda,\beta}(x) \ge 0$, where $W_{\lambda,\beta}$ is as in Eqn.~\eqref{fun: ZLF}.
\end{proof}

\subsection{Tightening The Problem of Approximating Our Proposed Converse Lyapunov Function}
Using Prop.~\ref{prop: diss ineq gives super LF} we now tighten the optimization problem given in Eqn.~\eqref{opt: intermediate}. For given $f \in C^1(\R^n, \R^n)$, $\lambda>0$, $\beta \in \N$, $R>0$ and $\Lambda \subseteq \Omega \subset \R^n$ consider the following optimization problem,
\vspace{-0.2cm}
\begin{align} \label{opt: tightened algebriac ROA}
& P_d\in \arg \min_{J \in \mcl P_d(\R^n,\R)} \int_{\Lambda} J(x) dx\\ \nonumber
& \text{ such that } J \text{ satisfies } \eqref{eqn: diss }, \eqref{eqn: diss 2 }, \text{ and } \eqref{eqn: diss 3}.
\end{align}
Clearly the Opt.~\eqref{opt: tightened algebriac ROA} is a tightening of the Opt.~\eqref{opt: algebriac ROA} since if $J$ is feasible to Opt.~\eqref{opt: tightened algebriac ROA} then by Prop.~\ref{prop: diss ineq gives super LF} we have that $J$ is also feasible to Opt.~\eqref{opt: algebriac ROA}. Moreover, Opt.~\eqref{opt: tightened algebriac ROA} is a convex optimization problem since it is linear in its decision variable, $J$, in both the constraints and objective function. In the next section we further tighten Opt.~\eqref{opt: tightened algebriac ROA} to an SOS Optimization Problem~\eqref{opt: SOS for ROA} that can be tractably solved. For implementation purposes we select $\Omega = B_R(0)$, where $R>0$, and $\Lambda \subseteq \Omega$ as some rectangular set (of form $[a_1, b_1] \times ... \times [a_1,b_2] \subset \R^n$).

\section{An SOS Optimization Problem For ROA Approximation} \label{sec: SOS opt}
For a given ODE~\eqref{eqn: ODE} we next propose a sequence of convex Sum-of-Squares (SOS) optimization problems, indexed by $d \in \N$. We show that the sequence of solutions, $\{P_d\}_{d\in \N}$, yields a sequence of sublevel sets which are contained inside the region of attraction of the ODE and which converge to the region of attraction of the ODE with respect to the volume metric as $d \to \infty$.

For given $f \in \mcl P(\R^n, \R^n)$, $\lambda>0$, $\beta \in \N$, $R>0$ and integration region $\Lambda \subset \R^n$ consider the following sequence of SOS optimization problems indexed by $d \in \N$:
\begin{align} \label{opt: SOS for ROA}
&P_d \in  \arg  \min_{J \in \mcl P_d(\R^n, \R)}  c^T \alpha \\ \nonumber
& J(x)=c^T Z_d(x), \\ \nonumber
&  k_1,k_2, s \in \sum_{SOS}^d \text{ and } p \in \mcl P_d(\R^n,\R) \\ \nonumber
&  J(0) \ge 0, \\ \nonumber
& k_1(x)= -\nabla J^T(x)f(x) - \lambda (1-J(x))||x||_2^{2 \beta} - s(x)(R^2 - ||x||_2^2), \\ \nonumber
& k_2(x)= (J(x)-1) - p(x)(R^2 - ||x||_2^2),
\end{align}
where $\alpha_i=\int_{\Lambda }  Z_{d,i}(x) dx $, recalling $Z_d: \R^n  \to \R^{ \mcl N_d}$ is the vector of monomials of degree $d \in \N$ and $ \mcl N_d= { d+ n \choose d}$.

We will show next, in Cor.~\ref{cor: SOS produces inner approx}, that the family of SOS optimization problems given in Eqn.~\eqref{opt: SOS for ROA} yields an inner approximation of $ROA_f$ for each $d \in \N$ (an approximation certifiably contained inside of $ROA_f$).
\begin{cor} \label{cor: SOS produces inner approx}
	Consider $f \in \mcl P (\R^n, \R)$ , $\lambda>0$, $\beta \in \N$, $R>0$ and $\Lambda \subset \R^n$. Suppose $ROA_f \subseteq B_R(0)$ and there exists $\eta>0$ such that $B_\eta(0)$ is an exponentially stable set. Then we have that
	\begin{align} \label{eqn: SOS gives subset of ROA}
	\{x \in B_R(0) :P_d(x) < 1\} \subseteq ROA_f \text{ for all } d \in \N,
	\end{align}
	where $P_d$ is any solution to the SOS Problem~\eqref{opt: SOS for ROA} for $d \in \N$.
\end{cor}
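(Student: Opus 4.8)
The plan is to show that any polynomial $P_d$ feasible to the SOS Problem~\eqref{opt: SOS for ROA} satisfies the three partial differential inequalities~\eqref{eqn: diss }, \eqref{eqn: diss 2 }, and~\eqref{eqn: diss 3} of Prop.~\ref{prop: diss ineq gives super LF} on the compact set $\Omega = (B_R(0))^{cl}$. Once this is established, Prop.~\ref{prop: diss ineq gives super LF} immediately yields the pointwise bound $W_{\lambda,\beta}(x) \le P_d(x)$ for all $x \in \Omega$, and the desired inclusion~\eqref{eqn: SOS gives subset of ROA} follows by combining this bound with the $1$-sublevel-set characterization of $ROA_f$ in Cor.~\ref{cor: ZLF characterizes ROA}.

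First I would translate the defining identity for $k_1$ into~\eqref{eqn: diss }. Since $k_1,s \in \sum_{SOS}^d$ we have $k_1(x) \ge 0$ and $s(x) \ge 0$ for all $x \in \R^n$, and on the closed ball $(B_R(0))^{cl}$ we have $R^2 - ||x||_2^2 \ge 0$, hence $s(x)(R^2 - ||x||_2^2) \ge 0$. Rearranging the $k_1$ identity then gives
\begin{align*}
\nabla P_d(x)^T f(x) + \lambda(1 - P_d(x))||x||_2^{2\beta} = -k_1(x) - s(x)(R^2 - ||x||_2^2) \le 0
\end{align*}
for all $x \in (B_R(0))^{cl}$, which is precisely~\eqref{eqn: diss } with $J = P_d$ and $\Omega = (B_R(0))^{cl}$.

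Next I would handle the boundary and origin conditions. On the sphere $\partial \Omega = \{x : ||x||_2 = R\}$ the factor $R^2 - ||x||_2^2$ vanishes, so the $k_2$ identity reduces to $P_d(x) - 1 = k_2(x) \ge 0$ (using $k_2 \in \sum_{SOS}^d$), which gives $P_d(x) \ge 1$ on $\partial \Omega$, i.e.~\eqref{eqn: diss 2 }; the constraint $P_d(0) \ge 0$ is imposed directly in Opt.~\eqref{opt: SOS for ROA}, giving~\eqref{eqn: diss 3}. Because $f \in \mcl P(\R^n,\R^n)$ and $P_d \in \mcl P_d(\R^n,\R)$ are smooth and $\Omega$ is compact, the hypotheses of Prop.~\ref{prop: diss ineq gives super LF} are satisfied, so $W_{\lambda,\beta}(x) \le P_d(x)$ for all $x \in (B_R(0))^{cl}$. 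Consequently, if $x \in B_R(0)$ with $P_d(x) < 1$, then $W_{\lambda,\beta}(x) < 1$, and the standing assumptions that $B_\eta(0)$ is exponentially stable and $ROA_f \subseteq B_R(0)$ let me invoke Cor.~\ref{cor: ZLF characterizes ROA} (specifically the identity~\eqref{eqn: ZLF characterizes ROA}) to conclude $x \in ROA_f$, establishing~\eqref{eqn: SOS gives subset of ROA}.

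I anticipate the only genuinely delicate point to be bookkeeping rather than mathematics: one must apply Prop.~\ref{prop: diss ineq gives super LF} over the \emph{closed} ball $(B_R(0))^{cl}$, so that the constraint set is compact and the sphere is exactly where the multiplier terms $s(x)(R^2-||x||_2^2)$ and $p(x)(R^2-||x||_2^2)$ drop out, while the target inclusion is phrased over the \emph{open} ball. It also matters that the two standing hypotheses of the corollary ($B_\eta(0)$ exponentially stable and $ROA_f \subseteq B_R(0)$) are used only to license Cor.~\ref{cor: ZLF characterizes ROA}, since Prop.~\ref{prop: diss ineq gives super LF} itself requires neither; keeping straight which result consumes which assumption is the main thing to get right.
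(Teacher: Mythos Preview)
Your proposal is correct and follows essentially the same route as the paper: verify that the SOS constraints imply the inequalities~\eqref{eqn: diss }--\eqref{eqn: diss 3}, invoke Prop.~\ref{prop: diss ineq gives super LF} to get $W_{\lambda,\beta}\le P_d$, and then apply Cor.~\ref{cor: ZLF characterizes ROA}. You are in fact more careful than the paper, which simply asserts that feasibility to~\eqref{opt: SOS for ROA} yields~\eqref{eqn: diss }--\eqref{eqn: diss 3} on $\Omega=B_R(0)$ without spelling out the SOS-to-inequality translation or the open/closed ball bookkeeping you flag.
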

\begin{proof}
	Suppose $P_d$ is any solution to the SOS Problem~\eqref{opt: SOS for ROA} for $d \in \N$. Then $P_d$ satisfies the constraints of the SOS Problem~\eqref{opt: SOS for ROA} and thus satisfies Eqns~\eqref{eqn: diss }, \eqref{eqn: diss 2 }, and \eqref{eqn: diss 3} for $\Omega=B_R(0)$. Therefore, $W_{\lambda,\beta}(x) \le P_d(x)$ for all $x \in B_R(0)$ by Prop.~\ref{prop: diss ineq gives super LF}. Hence, it is clear that
	\begin{align}
\{x \in B_R(0) :P_d(x) < 1\} \subseteq \{x \in B_R(0) :W_{\lambda,\beta}(x) < 1 \}.
	\end{align}
	Moreover, Cor.~\ref{cor: ZLF characterizes ROA} shows $\{x \in B_R(0) :W_{\lambda,\beta}(x) < 1 \}=ROA_f$ and thus Eqn.~\eqref{eqn: SOS gives subset of ROA} holds.
\end{proof}
Cor.~\ref{cor: SOS produces inner approx} implies that solution maps initialized inside our $ROA_f$ approximation asymptotically coverage to the origin. That is for any $d \in \N$ and for all $y \in 	\{x \in B_R(0) :P_d(x) < 1\}$ we have that $\lim_{t \to \infty} ||\phi_f(y,t)||_2=0$, where $P_d$ is any solution to the SOS Problem~\eqref{opt: SOS for ROA} for $d \in \N$ (note this does not rule out the possibility that $\{x \in B_R(0) :P_d(x) < 1\}= \emptyset$).

Further to Cor.~\ref{cor: SOS produces inner approx}, we will next show, in Theorem~\ref{thm: SOS converges to ROA in V}, that for sufficiently large $\lambda>0$ and $\beta \in \N$ the sequence of SOS optimization problems given in Eqn.~\eqref{opt: SOS for ROA} yields a sequence of sets that tend to $ROA_f$ with respect to the volume metric as $d \to \infty$. We first define the volume metric. For sets $A,B \subset \R^n$, we denote the volume metric as $D_V(A,B)$, where
\begin{equation} \label{eqn: volume metric}
D_V(A,B):=\mu( (A/B) \cup (B/A) ).
\end{equation}
We note that $D_V$ is a metric (Defn.~\ref{def:metric}), as shown in Lem.~\ref{lem: Dv is metric} (found in Appendix~\ref{sec: appendix 2}).
\begin{thm} \label{thm: SOS converges to ROA in V}
	Consider $f \in \mcl P (\R^n, \R)$ and integration region $\Lambda \subset \R^n$. Suppose there exists $\theta,\eta,R>0$ such that $||D^\alpha f(x)||_2<\theta$ for all $x \in B_R(0)$ and $||\alpha||_1 \le 2$, $B_\eta(0)$ is an exponentially stable set (Defn.~\ref{defn: asym and exp stab}) of the ODE~\eqref{eqn: ODE}, and $ROA_f \subset B_R(0)$. Then if $ROA_f \subseteq \Lambda \subset B_R(0)$, $\lambda > \theta \eta^{-2 \beta}$ and $ \beta>\frac{\theta}{2\delta} +\frac{1}{2}$ we have that
\vspace{-0.2cm}
\begin{align} \label{convergence in DV for SOS}
\lim_{d \to \infty} D_V\bigg(ROA_f, \{x \in \Lambda : P_d(x) < 1 \} \bigg)  = 0,
\end{align}
where $P_d$ is any solution to Problem~\eqref{opt: SOS for ROA} for $d \in \N$.
\end{thm}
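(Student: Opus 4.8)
The plan is to transfer the already-established facts about the analytic converse Lyapunov function $W_{\lambda,\beta}$ to the polynomial solutions $P_d$ via an $L^1$-approximation argument, and then invoke the set-convergence corollary. First I would record the two ``sandwich'' facts that do the heavy lifting. On one side, Cor.~\ref{cor: SOS produces inner approx} already supplies the inner-approximation inclusion $\{x \in \Lambda : P_d(x) < 1\} \subseteq ROA_f$ for every $d$. On the other side, since any $P_d$ feasible to Opt.~\eqref{opt: SOS for ROA} satisfies Eqns~\eqref{eqn: diss }, \eqref{eqn: diss 2 } and \eqref{eqn: diss 3} on $\Omega = B_R(0)$, Prop.~\ref{prop: diss ineq gives super LF} yields $W_{\lambda,\beta}(x) \le P_d(x)$ for all $x \in B_R(0) \supseteq \Lambda$. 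Hence on $\Lambda$ the integrand $P_d - W_{\lambda,\beta}$ is nonnegative, so
\[
\int_\Lambda |P_d(x) - W_{\lambda,\beta}(x)|\,dx = \int_\Lambda P_d(x)\,dx - \int_\Lambda W_{\lambda,\beta}(x)\,dx,
\]
and minimizing the SOS objective $c^T\alpha = \int_\Lambda P_d$ is equivalent to minimizing the $L^1(\Lambda)$ distance $\|P_d - W_{\lambda,\beta}\|_{L^1(\Lambda)}$ from above.

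The core of the argument is then to show that this minimal $L^1$ distance tends to zero as $d \to \infty$, which I would split into monotonicity and vanishing. For monotonicity, any tuple feasible at degree $d$ (together with its SOS certificates) remains feasible at every degree $d' \ge d$, since an SOS polynomial of degree at most $d$ is SOS of degree at most $d'$ and the defining constraint identities are degree-independent; thus the optimal value $\|P_d - W_{\lambda,\beta}\|_{L^1(\Lambda)}$ is non-increasing in $d$. For vanishing, I would produce, for each $\eps > 0$, a single polynomial that is feasible to Opt.~\eqref{opt: SOS for ROA} at some finite degree and whose $L^1(\Lambda)$ distance to $W_{\lambda,\beta}$ is below $\eps$; this is exactly the content of the mollification construction deferred to Appendix~\ref{sec: appendix mollification}. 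Combining monotonicity with the existence of such $\eps$-feasible polynomials gives $\lim_{d\to\infty}\|P_d - W_{\lambda,\beta}\|_{L^1(\Lambda)} = 0$.

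The last step converts $L^1$ convergence of the functions into volume-metric convergence of their $1$-sublevel sets. Since $P_d \ge W_{\lambda,\beta}$ on $\Lambda$ and $P_d \to W_{\lambda,\beta}$ in $L^1(\Lambda)$, Cor.~\ref{cor: close in L1 implies close in V norm} gives $D_V\big(\{x \in \Lambda : W_{\lambda,\beta}(x) < 1\}, \{x \in \Lambda : P_d(x) < 1\}\big) \to 0$; here the dominance $P_d \ge W_{\lambda,\beta}$ is what lets one control the measure of $ROA_f \setminus \{P_d < 1\}$ by the $L^1$ error away from the boundary together with the vanishing measure of the near-boundary strip $\{1-\delta < W_{\lambda,\beta} < 1\}$. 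Finally, Cor.~\ref{cor: ZLF characterizes ROA} identifies $\{x \in \R^n : W_{\lambda,\beta}(x) < 1\}$ with $ROA_f$, and since $ROA_f \subseteq \Lambda \subset B_R(0)$ we have $\{x \in \Lambda : W_{\lambda,\beta}(x) < 1\} = ROA_f$, which yields Eqn.~\eqref{convergence in DV for SOS}.

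The main obstacle is the vanishing step, namely constructing the $\eps$-feasible polynomials. The difficulty is that $W_{\lambda,\beta}$ is only globally Lipschitz (Prop.~\ref{prop: ZLF is lip}) and satisfies its defining PDE only almost everywhere (Prop.~\ref{prop: ZLF PDE}), so $W_{\lambda,\beta}$ itself is not a valid $C^1$ competitor. To produce a genuinely feasible competitor one must mollify $W_{\lambda,\beta}$ into a smooth function that strictly satisfies the dissipation inequality Eqn.~\eqref{eqn: diss } with a positive margin, raise it slightly to secure the strict boundary and positivity conditions Eqns~\eqref{eqn: diss 2 } and \eqref{eqn: diss 3} while keeping the $L^1$ error small, and then approximate this smooth competitor uniformly together with its gradient by a polynomial. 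The strictness of the inequalities on the compact set $B_R(0)$ is precisely what allows a Positivstellensatz (Putinar-type) representation to certify the constraints as the bounded-degree SOS conditions appearing in Opt.~\eqref{opt: SOS for ROA}; the delicate bookkeeping lies in simultaneously controlling the mollification margin, the $L^1$ error, and the degree required for the SOS certificates.
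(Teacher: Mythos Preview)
Your proposal is correct and follows essentially the same route as the paper: establish $W_{\lambda,\beta}\le P_d$ on $\Lambda$ via Prop.~\ref{prop: diss ineq gives super LF}, use Theorem~\ref{thm: existence of feasible solution to SOS} together with Putinar's Positivstellensatz to produce $\eps$-close feasible polynomial competitors, conclude $\|P_d-W_{\lambda,\beta}\|_{L^1(\Lambda)}\to 0$ by optimality, and then apply Cor.~\ref{cor: close in L1 implies close in V norm} and Cor.~\ref{cor: ZLF characterizes ROA}. The explicit monotonicity-in-$d$ observation you add is not needed in the paper's argument (optimality for each $d>D$ against the fixed feasible competitor already suffices), but it is harmless.
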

\begin{proof}
By Cor.~\ref{cor: ZLF characterizes ROA} we have that $ROA_f=\{x\in \R^n: W_{\lambda,\beta}(x)<1 \}$. Moreover, since $P_d$ satisfies the constraints of the SOS Problem~\eqref{opt: SOS for ROA} it follows that $P_d$ satisfies Eqns~\eqref{eqn: diss }, \eqref{eqn: diss 2 }, and \eqref{eqn: diss 3} for $\Omega=B_R(0)$. Therefore, $W_{\lambda,\beta}(x) \le P_d(x)$ for all $x \in B_R(0)$ by Prop.~\ref{prop: diss ineq gives super LF}. Thus, by Cor.~\ref{cor: close in L1 implies close in V norm} (found in Appendix~\ref{sec: appendix 2}) it follows that Eqn.~\eqref{convergence in DV for SOS} holds if $\lim_{d \to \infty} ||P_d - W_{\lambda,\beta}||_{L^1(\Lambda,\R)}=0$. To show $\lim_{d \to \infty} ||P_d - W_{\lambda,\beta}||_{L^1(\Lambda,\R)}=0$ we must show for all $\eps>0$ there exists $D \in \N$ such that
\begin{align} \label{L1 norm}
\int_{\Lambda} |P_d(x) - W_{\lambda,\beta}(x)| dx < \eps \text{ for all } d >D.
\end{align}
	
Now, let $\eps>0$. Then Theorem~\ref{thm: existence of feasible solution to SOS} shows there exists $J \in \mcl P(\R^n,\R)$ such that
	\begin{align} \label{p1}
& \sup_{x \in B_R(0)} | J(x) - W_{\lambda, \beta}(x) | < \frac{\eps}{\mu(\Lambda) +1},\\ \nonumber
& \nabla J(x)^Tf(x) <- \lambda (1-J(x))||x||_2^{2 \beta}   \text{ for all } x \in B_R(0), \\ \nonumber
& J(x) >1  \text{ for all } x \in \partial B_R(0) \text{ and } J(0) > 0.
\end{align}

Since $B_R(0)=\{x\in \R^n: R^2 - ||x||_2^2 \ge 0 \}$ and $\partial B_R(0)=\{x\in \R^n: R^2 - ||x||_2^2 \ge 0,  ||x||_2^2 - R^2 \ge 0  \}$ we have that by Putinar's Positivstellesatz (Theorem~\ref{thm: Psatz} given in Appendix~\ref{sec: appendix}) there exists $s_i \in \sum_{SOS}$ for $i \in \{1,...,5\}$ such that
\begin{align} \nonumber
& -\nabla J(x)^Tf(x) - \lambda (1-J(x))||x||_2^{2 \beta} \hspace{-0.15cm}   -s_1(x)(R^2 \hspace{-0.1cm} - ||x||_2^2)=s_2(x) , \\ \label{psatz1}
& \hspace{5cm} \text{for all } x \in \R^n.\\ \nonumber
& J(x) -1 - (s_3(x) -s_4(x)) (R^2 - ||x||_2^2)= s_5(x),\\ \label{psatz2}
& \hspace{5cm} \text{for all } x \in \R^n.
\end{align}

Let $D:=\max\{ \max_{i=1,..,5}\{deg(s_i)\}, deg(J) \}$. Then from Eqns~\eqref{psatz1} and \eqref{psatz2} and since $J(0)>0$ it follows that $J$ is feasible to the SOS Problem~\eqref{opt: SOS for ROA} for any $d>D$. Since, $P_d$ is the optimal solution to the SOS Problem~\eqref{opt: SOS for ROA} it follows that the objective function of the SOS Problem~\eqref{opt: SOS for ROA} evaluated at $P_d$ is less than or equal to the objective function evaluated at the feasible solution $J$ for $d>D$. That is by writing $P_d$ and $J$ with respect to the monomial vector, $P_d(x)=c_d^T Z_d(x)$ and $J(x)= b^T Z_{deg(J)}(x)$, we have that
\begin{align} \label{p2}
\int_{\Lambda} P_d(x) dx = c_d^T \alpha \le  b^T \gamma = \int_{\Lambda} J(x) dx \text{ for all } d>D,
\end{align}
where $\alpha_i=\int_{\Lambda }  Z_{d,i}(x) dx $, and $\gamma_i=\int_{\Lambda }  Z_{deg(J),i}(x) dx $.

We now show Eqn.~\eqref{L1 norm}. Using the fact $W_{\lambda,\beta}(x) \le P_d(x)$ for all $x \in \Lambda$ together with Eqns~\eqref{p1} and \eqref{p2} we get that,
\begin{align*}
& \int_{\Lambda} |P_d(x) - W_{\lambda,\beta}(x)|  dx  = \int_{\Lambda} P_d(x) dx - \int_{\Lambda} W_{\lambda,\beta}(x) dx\\
& \le \int_{\Lambda} J(x) dx - \int_{\Lambda} W_{\lambda,\beta}(x) dx \\
& \le \mu(\Lambda) \sup_{x \in \Lambda}\{|J(x) - W_{\lambda,\beta}(x) |\} < \eps \text{ for all } d>D.
\end{align*}
Hence by Cor.~\ref{cor: close in L1 implies close in V norm} (found in Appendix~\ref{sec: appendix 2}) it follows that Eqn.~\eqref{convergence in DV for SOS} holds.
\end{proof}

\section{Numerical Examples} \label{sec: numerical examples}
We now present several numerical examples that demonstrate that by solving the SOS problem, given in Eqn.~\eqref{opt: SOS for ROA}, we are able to approximate the region of attraction of a nonlinear system. Note that for numerical implementation it is best to choose $\lambda>0$ as small as possible. This is because the Lipschitz constant (given in Eqn.~\eqref{eqn: lip constant of ZLF}) of $W_{\lambda,\beta}$ (given in Eqn.~\eqref{fun: MLF}) grows as $\lambda>0$ increases.  For these numerical examples, we solve Opt.~\eqref{opt: SOS for ROA} using SOSTOOLS~\cite{sostools} to reformulate the problem as a Semi-Definite Programming (SDP) problem that is then solved by Sedumi~\cite{sturm1999using}.
		\begin{figure} 	
	\flushleft
	\includegraphics[scale=0.6]{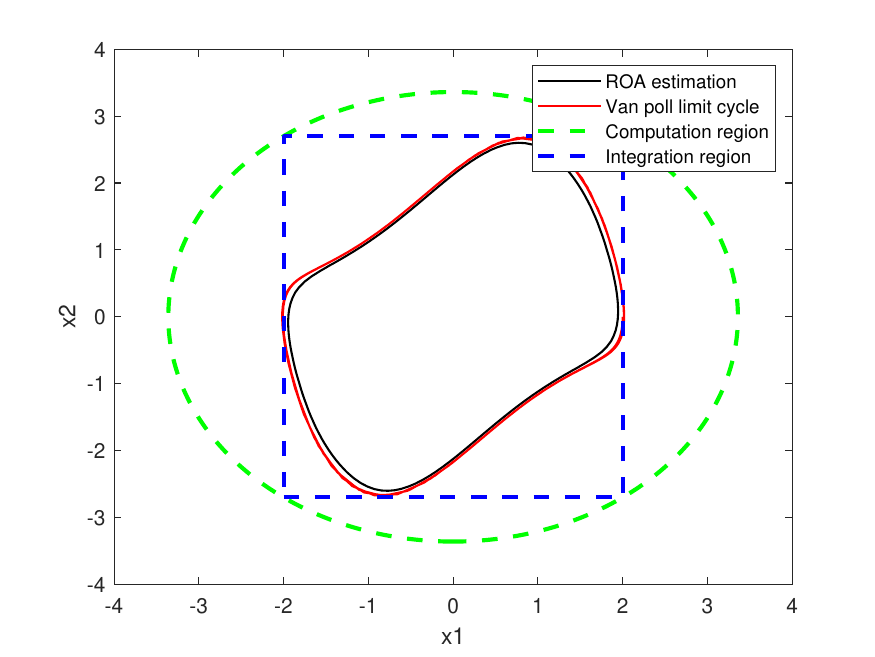}
		\vspace{-20pt}
	\caption{Graph showing an estimation of the region of attraction of the Van der Pol oscillator (Example~\ref{ex: van pol}) found by solving the SOS Problem~\eqref{opt: SOS for ROA}. The black line is the $1$-sublevel set of a solution to the SOS Problem~\eqref{opt: SOS for ROA}. The red line is the boundary of the region of attraction found by simulating a reverse time trajectory using Matlab's $\texttt{ODE45}$ function. The dotted blue line is the integration region, $\Lambda=[-2,2] \times [-2.7,2.7]$. The dotted green line is the computation region, $B_R(0)$ where $R=3.36$.   } \label{fig:perfect_d_6_van}
	\vspace{-10pt}
\end{figure}
\begin{ex} \label{ex: van pol}
Consider the Van der Pol oscillator defined by the ODE:
\begin{align} \label{eqn: van der pol ode}
\dot{x}_1(t) & = -x_2(t)\\ \nonumber
\dot{x}_2(t) & = x_1(t) - x_2(t)(1- x_1^2(t)).
\end{align}
In Fig.\ref{fig:perfect_d_6_van} we have plotted our estimation of the region of attraction of the ODE~\eqref{eqn: van der pol ode}. Our estimation is given by the $1$-sublevel set of the solution to the SOS optimization problem given in Eqn.~\eqref{opt: SOS for ROA} for $d=12$, $\lambda = 0.05$, $\beta=2$, $R=\sqrt{2^2 + 2.7^2} \approxeq 3.36$, $\Lambda=[-2,2] \times [-2.7,2.7]$, and $f=[-x_2 , x_1 + x_2 (x_1^2)]^T$.
\end{ex}
		\begin{figure} 	
	\flushleft
	\includegraphics[scale=0.6]{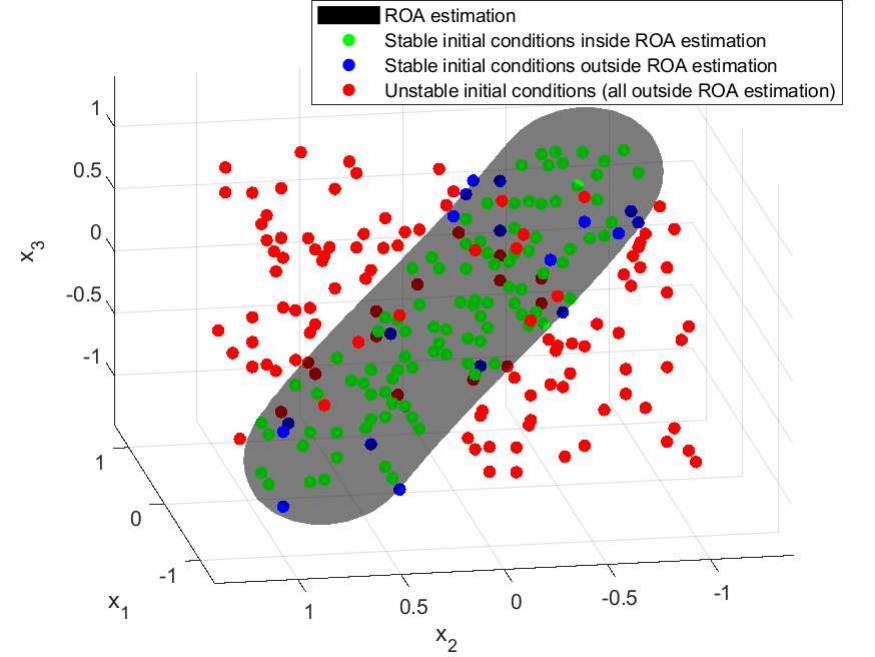}
	\vspace{-20pt}
	\caption{Graph showing an estimation of the region of attraction of servomechanism with multiplicative feedback control (Example~\ref{ex: servomechanism}). The estimation of the region of attraction is given by the transparent black sublevel set that is the $1$-sublevel set of a solution to the SOS Problem~\eqref{opt: SOS for ROA}. The scattered points are randomly generated initial conditions with associated trajectory (found using Matlab's \texttt{ODE45} function) that tends towards the origin (blue and green points) or away from the origin (red points). } \label{fig:3D}
	\vspace{-10pt}
\end{figure}
\begin{ex} \label{ex: servomechanism} Consider the third order servomechanism with multiplicative feedback control found in~\cite{ku1958stability} given by the following ODE:
	\begin{align} \label{ODE: thrid order}
T \frac{d^3 y}{dt^3} + \frac{d^2 y}{dt^2} + K_2(1-K_3 y^2)\frac{d y}{dt} + K_1 y=0,
	\end{align}
	where $T \in \R$ is a time constant and $K_1,K_2,K_2 \in \R$ are gain constants. We consider the case $T=K_2=1$ and $K_1=K_3=1$. The ODE~\eqref{ODE: thrid order} can be represented in the form $\dot{x}(t)=f(x(t))$ with
	\begin{align} \label{vector field: 3D}
	f(x)=[x_2,x_3, (1/T)(-x_3-K_2(1-K_3 x_1^2)x_2 - K_1 x_1)]^T.
	\end{align}
 Through numerical experiments using Matlab's \texttt{ODE45} function it was found that the ODE with vector field given in Eqn.~\eqref{vector field: 3D} appeared to have unbounded region of attraction. Therefore, for this system, Theorem~\ref{thm: SOS converges to ROA in V} does not show that the sequence of sublevel sets to the solution to the SOS problems given in Eqn.~\eqref{opt: SOS for ROA} converges to the region of attraction as $d \to \infty$. Nevertheless, in Fig.~\ref{fig:3D} we have plotted the $1$-sublevel set of the solution to the SOS optimization problem given in Eqn.~\eqref{opt: SOS for ROA} for $d=10$, $\lambda =0.5 $, $\beta=2$, $R=\sqrt{3}$, $\Lambda=[-1,1]^3$ and $f$ given in Eqn.~\eqref{vector field: 3D}. Fig.~\ref{fig:3D} indicates that even for systems with unbounded regions of attraction, our proposed SOS algorithm can provide arbitrarily good inner estimations of $ROA_f \cap \Lambda$, where $\Lambda \subset \R^n$ is some compact set. Through Monte Carlo simulation the volume of $ROA_f \cap \Lambda$ was estimated to be 0.3372 whereas the volume of our ROA approximation was found to be 0.2806, an error of 0.0566.
\end{ex}

\section{Conclusion} \label{sec: conclusion}
For a given locally exponentially stable dynamical system, described by an ODE, we have proposed a family SOS optimization problems that yields a sequence of sublevel sets that converge to the region of attraction of the ODE with respect to the volume metric. In order to facilitate this result we proposed a new converse Lyapunov function that was shown to be globally Lipschitz continuous. We have provided several numerical examples of practical interest showing how our proposed family of SOS problems can provide arbitrarily good approximations of regions of attraction. In future work we aim extend this work to systems with weaker forms of stability and investigate systems with unbounded regions of attraction.

\section{Acknowledgments}
This work was supported by the National Science Foundation under grants No. 1931270.

\bibliographystyle{unsrt}
\bibliography{bib_Zublov_LF}

\section{Appendix A: Approximation of Lipschitz Converse Lyapunov Functions in Sobolev Space} \label{sec: appendix mollification}
In this section we introduce aspects of mollification and polynomial approximation theory used in our proof of Theorem~\ref{thm: existence of feasible solution to SOS}; that there exists a polynomial arbitrarily ``close" to the converse Lyapunov function $W_{\lambda, \beta}$ (given in Eqn.~\eqref{fun: ZLF}) and is also a feasible solution to some $d \in \N$ instantiation of the family of SOS optimization problems given in Eqn.~\eqref{opt: SOS for ROA}. Theorem~\ref{thm: existence of feasible solution to SOS} is a key ingredient in the proof of Theorem~\ref{thm: SOS converges to ROA in V} (the main result of the paper).

\subsection{Approximating Lipschitz Functions by Infinitely Differentiable Functions}
For an overview of approximation by mollification we refer to~\cite{evans2010partial}.

\paragraph{Mollifiers}
The standard mollifier, $\eta \in C^\infty(\R^n , \R)$ is defined as

\vspace{-0.7cm}
\begin{align} \label{fun: mollifier}
\eta(x):=\begin{cases}
C \exp\left(\frac{1}{||x||_2^2 -1}\right) \quad \text{when } ||x||_2<1,\\
0 \quad \text{when } ||x||_2 \ge 1,
\end{cases}
\end{align}
where $C>0$ is chosen such that $\int_{\R^n } \eta(x) dx  =1$.

For $\sigma>0$ we denote the scaled standard mollifier by $\eta_\sigma \in C^\infty(\R^n , \R)$ such that

\vspace{-0.55cm}
\begin{equation*}
\eta_\sigma(x):= \frac{1}{\sigma^{n}} \eta\left(\frac{x}{\sigma} \right).
\end{equation*} Note, clearly $\eta_\sigma(x)=0$ for all $x \notin B_\sigma(0)$.

\paragraph{Mollification of a Function (Smooth Approximation)}
Recall from Section~\ref{sec: notation} that for open sets $\Omega \subset \R^n$ and $\sigma>0$ we denote $<\Omega>_\sigma:=\{x \in \Omega : B_\sigma(x) \subset \Omega  \}$. Now, for each $\sigma>0$ and function $V \in L^1(\Omega, \R)$ we denote the \textit{$\sigma$-mollification} of $V$ by $[V]_\sigma: <\Omega >_\sigma \to \R$, where
\vspace{-0.1cm}
\begin{align} \label{eqn: mollification}
[V]_\sigma(x) & := \int_{\R^n } \eta_\sigma(x-z)V(z)dz=\int_{B_\sigma(0)} \eta_\sigma(z) V(x-z) dz   .
\end{align}
To calculate the derivative of a mollification we next introduce the concept of weak derivatives.
\begin{defn} \label{def: weak deriv}
	For $\Omega \subset \R^n$ and $F \in L^1(\Omega, \R)$ we say any $H \in L^1(\Omega, \R)$ is the weak $i \in \{1,..,n\}$-partial derivative of $F$ if
	
	\vspace{-0.55cm}
	\begin{align*}
	\int_{\Omega} F(x) \frac{\partial}{\partial x_i} \alpha(x) dx = - \int_{\Omega} H(x) \alpha(x) dx,  \text{ for } \hspace{-0.05cm} \alpha \in C^\infty(\R^n,\R).
	\end{align*}
\end{defn}
Weak derivatives are ``essentially unique". That is if $H_1$ and $H_2$ are both weak derivatives of a function $F$ then the set of points where $H_1(x) \ne H_2(x)$ has measure zero. If a function is differentiable then its weak derivative is equal to its derivative in the ``classical" sense. We will use the same notation for the derivative in the ``classical" sense and in the weak sense.


In the next proposition we state some useful properties about Sobolev spaces and mollifications taken from \cite{evans2010partial}.

\begin{prop}[\cite{evans2010partial}] \label{prop:mollification}
	For $1 \le p < \infty$ and $k \in \N$ we consider $V \in W^{k,p}(E, \R)$, where $E\subset \R^{n}$ is an open bounded set, and its $\sigma$-mollification $[V]_\sigma$. Recalling from Section \ref{sec: notation} that for an open set $\Omega \subset \R^n$ and $\sigma>0$ we denote $<\Omega>_\sigma:=\{x \in \Omega: B(x,\sigma) \subset \Omega \}$, the following holds:
	\begin{enumerate}
		\item For all $\sigma>0$ we have $[V]_\sigma \in C^\infty( <E>_\sigma, \R )$.
		\item For all $\sigma>0$ we have $\nabla_x [V]_\sigma(x)= [ \nabla_x  V]_\sigma(x)$ for $x \in {<E>_\sigma}$, where   $\nabla_x V$ is a weak derivatives.
		\item  If $V \in C(E,\R)$ then for any compact set $K \subset E$ we have $\lim_{\sigma \to 0}\sup_{(x) \in K}| V(x)- [V]_\sigma(x)|=0$.
	\end{enumerate}
\end{prop}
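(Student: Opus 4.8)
The plan is to prove the three assertions separately, since each rests on a different elementary property of the mollifier. Throughout I fix $V \in W^{k,p}(E,\R)$ and write $[V]_\sigma(x) = \int_{\R^n} \eta_\sigma(x-z) V(z)\, dz$, exploiting that $\eta_\sigma \in C^\infty(\R^n,\R)$ is supported in $B_\sigma(0)$ with every derivative bounded, and that $E$ is bounded so $V \in L^p(E,\R) \subseteq L^1(E,\R)$.

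For Statement 1, I would show by induction on the order of the multi-index $\alpha$ that $D^\alpha [V]_\sigma(x) = \int_{\R^n} D^\alpha_x \eta_\sigma(x-z) V(z)\, dz$ for every $x \in <E>_\sigma$, i.e. that differentiation may be passed under the integral sign. The justification is a difference-quotient argument: for $x \in <E>_\sigma$ the integrand and its difference quotients are supported in the compact set $(B_\sigma(x))^{cl} \subset E$, on which $V$ is integrable, while the smooth factor together with its difference quotients is uniformly bounded; the dominated convergence theorem then lets me exchange limit and integral. Since $\alpha$ is arbitrary, $[V]_\sigma \in C^\infty(<E>_\sigma,\R)$.

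For Statement 2, I would start from the formula just obtained, $\partial_{x_i}[V]_\sigma(x) = \int_{\R^n} \partial_{x_i}\eta_\sigma(x-z) V(z)\, dz$, and use the identity $\partial_{x_i}\eta_\sigma(x-z) = -\partial_{z_i}\eta_\sigma(x-z)$. The crucial point — and the place where the restriction to $<E>_\sigma$ is essential — is that for fixed $x \in <E>_\sigma$ the map $z \mapsto \eta_\sigma(x-z)$ lies in $C^\infty$ and has support contained in $B_\sigma(x) \subset E$, so it is an admissible test function on $E$. Applying Defn.~\ref{def: weak deriv} with this test function converts $-\int_{\R^n} \partial_{z_i}\eta_\sigma(x-z) V(z)\, dz$ into $\int_{\R^n} \eta_\sigma(x-z)\, \partial_{z_i} V(z)\, dz = [\partial_{x_i} V]_\sigma(x)$, which componentwise gives $\nabla_x[V]_\sigma = [\nabla_x V]_\sigma$.

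For Statement 3, I assume $V \in C(E,\R)$ and fix a compact $K \subset E$. Using $\int_{\R^n}\eta_\sigma = 1$ I would write $[V]_\sigma(x) - V(x) = \int_{B_\sigma(0)} \eta_\sigma(z)\,(V(x-z) - V(x))\, dz$, whence $\sup_{x \in K}|[V]_\sigma(x) - V(x)| \le \sup_{x \in K,\ \norm{z}_2 < \sigma}|V(x-z)-V(x)|$. Choosing a compact neighborhood $K' \supset K$ with $K' \subset E$ and applying uniform continuity of $V$ on $K'$, the right-hand side tends to $0$ as $\sigma \to 0$, which is the claimed uniform convergence. The main obstacle is the rigorous justification in Statement 1 of differentiation under the integral together with the support bookkeeping in Statement 2; once the test-function support is confined to $E$ by the $<E>_\sigma$ restriction, the weak-derivative definition applies verbatim and the remaining estimates are routine.
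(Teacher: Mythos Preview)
The paper does not supply its own proof of this proposition; it is quoted from \cite{evans2010partial} without argument. Your proposal is correct and is precisely the standard proof found in that reference: differentiation under the integral sign via dominated convergence for Statement~1, the chain rule identity $\partial_{x_i}\eta_\sigma(x-z)=-\partial_{z_i}\eta_\sigma(x-z)$ together with the definition of weak derivative for Statement~2, and uniform continuity on a compact enlargement of $K$ for Statement~3. The only point worth tightening is in Statement~2, where you should make explicit that the test function $z\mapsto\eta_\sigma(x-z)$ has compact support contained in $E$ (not merely that it is $C^\infty$), since this is what the integration-by-parts definition of the weak derivative actually requires; you do say this, but it is the hinge of the argument and deserves emphasis.
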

\subsection{Weighted Polynomial Approximation in Sobolev Space}
We next state a result that can be thought of as a generalization of the Weierstrass approximation theorem. It proves there exists a polynomial that can approximate a sufficiently smooth function arbitrarily well with respect to the $W^{1, \infty}$ norm weighted by a function of form $w(x)= 1/||x||_2^{2 \beta}$. This result was first presented in the case of $\beta=1$ in~\cite{peet2009exponentially} and then later extended the to general case of $\beta \in \N$ in~\cite{leth2017existence}.
\begin{thm}[Weighted Polynomial Approximation \cite{leth2017existence}] \label{thm:Nachbin}
{Let $E \subset \R^n$ be an open set, $\beta \in \N$ and $V \in C^{2 \beta +2}(\R^n, \R)$. For any compact set $K \subseteq E$ and $\eps>0$ there exists  $g \in \mcl P(\R^n, \R)$ such that}
	\begin{align*}
&|V(x) -  g(x)| < \eps ||x||_2^{2 \beta} \text{ for all } x \in K,\\
&||\nabla V(x) - \nabla g(x)||_2 < \eps ||x||_2^{2 \beta} \text{ for all } x \in K.
	\end{align*}
\end{thm}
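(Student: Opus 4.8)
The plan is to reduce the weighted estimate to an ordinary (unweighted) simultaneous approximation of $V$ and its derivatives everywhere except near the origin, and to handle the origin---the only point where the weight $\|x\|_2^{2\beta}$ degenerates---by forcing the approximant $g$ to match the Taylor expansion of $V$ to order $2\beta$ there. First I would fix a radius $r \in (0,1]$ and split $K$ into the far part $K \setminus B_r(0)$, on which $\|x\|_2 \ge r$ so that $\|x\|_2^{2\beta} \ge r^{2\beta} > 0$, and the near part $K \cap B_r(0)$, on which $\|x\|_2 < r \le 1$.

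The construction of $g$ proceeds in three steps. First, using mollification (Prop.~\ref{prop:mollification}) together with the classical fact that a function in $C^{2\beta+1}$ can be approximated, simultaneously with all its derivatives up to order $2\beta+1$, by polynomials on a compact set, I would produce $q_0 \in \mcl P(\R^n,\R)$ with $\|D^\alpha(V-q_0)\|_{\infty,K} < \delta$ for every $|\alpha| \le 2\beta+1$, where $\delta>0$ is a parameter fixed at the end. Second, letting $T_{2\beta}$ and $P$ denote the degree-$2\beta$ Taylor polynomials at the origin of $V$ and of $q_0$ respectively, I set $g := q_0 - P + T_{2\beta}$. Since $g$ differs from $q_0$ only in its Taylor coefficients of order $\le 2\beta$, and each such coefficient of $q_0$ already agrees with that of $V$ to within $O(\delta)$, the polynomial $T_{2\beta}-P$ has all coefficients of size $O(\delta)$; hence $|T_{2\beta}(x)-P(x)| \le C_1\delta$ and $\|\nabla(T_{2\beta}(x)-P(x))\|_2 \le C_1\delta$ on $K$, with $C_1$ depending only on $K$, $n$ and $\beta$. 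Third, by construction $D^\alpha(V-g)(0)=0$ for all $|\alpha| \le 2\beta$, so $V-g$ vanishes to order $2\beta+1$ at the origin.

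On the far part the weighted bound follows from the unweighted one, since $|V-g| \le |V-q_0| + |T_{2\beta}-P| \le (1+C_1)\delta$, and likewise for the gradient, so both fall below $\eps r^{2\beta} \le \eps\,\|x\|_2^{2\beta}$ once $(1+C_1)\delta < \eps r^{2\beta}$. On the near part I would invoke Taylor's theorem with remainder: because $V-g$ vanishes to order $2\beta+1$ at the origin and $D^\gamma(V-g) = D^\gamma(V-q_0)$ for $|\gamma| = 2\beta+1$ (the polynomial $T_{2\beta}-P$ having degree $2\beta$), the remainder is controlled by $\sup_{|\gamma|=2\beta+1}|D^\gamma(V-q_0)| < \delta$, giving $|V(x)-g(x)| \le C\delta\,\|x\|_2^{2\beta+1} \le C\delta\,\|x\|_2^{2\beta}$; applying the same remainder estimate to each first-order derivative (which then vanishes to order $2\beta$ at the origin) yields $\|\nabla V(x)-\nabla g(x)\|_2 \le C\delta\,\|x\|_2^{2\beta}$ for $\|x\|_2 \le 1$, with $C$ a combinatorial constant independent of $\delta$. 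Choosing $\delta$ small enough that $C\delta < \eps$ and $(1+C_1)\delta < \eps r^{2\beta}$ hold together completes the argument.

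The main obstacle is the behaviour at the origin, where the target error $\eps\|x\|_2^{2\beta}$ tends to zero: no uniform (or even $C^1$) approximation can meet it, and the two demands on $g$---matching $V$ to high order at the origin while staying globally close to $V$---appear to be in tension. The observation that resolves this is that these demands are in fact compatible: replacing the low-order Taylor coefficients of a good $C^{2\beta+1}$-approximant by the \emph{exact} coefficients of $V$ perturbs $g$ only by $O(\delta)$ globally (since those coefficients already match to $O(\delta)$), while the exact matching produces, through the Taylor remainder, precisely the $\|x\|_2^{2\beta}$-decay required near the origin. A secondary technical point is the simultaneous $C^{2\beta+1}$ polynomial approximation of $V$, which I would obtain by mollifying $V$---so that $D^\alpha[V]_\sigma = [D^\alpha V]_\sigma \to D^\alpha V$ uniformly on compacts by Prop.~\ref{prop:mollification}---and then approximating the resulting smooth function together with its derivatives by polynomials.
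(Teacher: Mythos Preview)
The paper does not prove this theorem; it is quoted from the cited references (Leth et al.\ for general $\beta$, Peet for $\beta=1$), so there is no in-paper argument to compare against. Your proof is correct, and the strategy---force the approximant to share the Taylor jet of $V$ at the origin to order $2\beta$ so that Taylor's remainder supplies the factor $\|x\|_2^{2\beta}$ near $0$, while using an ordinary $C^1$ bound away from $0$---is precisely the mechanism behind those cited results.

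Two minor technical points. First, the mollification step is superfluous: $V$ is already in $C^{2\beta+2}$, so you may invoke directly the classical fact that polynomials are dense in $C^{2\beta+1}$ on any compact set; there is no need to smooth $V$ further before approximating. Second, the Taylor-remainder estimate on $K\cap B_r(0)$ requires $|D^\gamma(V-q_0)|<\delta$ for $|\gamma|=2\beta+1$ along the segments $[0,x]$, which need not lie in $K$. This is repaired by performing the initial $C^{2\beta+1}$ approximation on the enlarged compact set $K\cup (B_1(0))^{cl}$ (or by noting that if $0\notin K$ then $K\cap B_r(0)=\emptyset$ for small $r$ and the near-origin analysis is vacuous). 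With these cosmetic adjustments the argument is complete.
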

\subsection{Approximation of Lyapunov Functions}
In this section we show in Theorem~\ref{thm: existence of feasible solution to SOS} that there exists a polynomial arbitrarily ``close" to the converse Lyapunov function $W_{\lambda, \beta}$ (given in Eqn.~\eqref{fun: ZLF}) and also a feasible solution to some $d \in \N$ instantiation of the family of SOS optimization problems given in Eqn.~\eqref{opt: SOS for ROA}. We take the following steps:
\begin{enumerate}[label=(\Alph*)]
	\item In Lemma~\ref{lem: mollification of ZLF} we take the mollification of $W_{\lambda, \beta}$ to show there exists an infinitely differentiable function that satisfies Eqns~\eqref{p 1}, \eqref{p 2} and \eqref{p 3}.
	\item In Prop.~\ref{prop: parition of unity ZLF} we use Lemma~\ref{lem: mollification of ZLF} together with partitions of unity (Theorem~\ref{thm: partition of unity}) to show there exists an infinitely differentiable function that satisfies Eqns~\eqref{p 12}, \eqref{p 22} and \eqref{p 32}.
	\item In Theorem~\ref{thm: existence of feasible solution to SOS} we use Prop.~\ref{prop: parition of unity ZLF} together with the polynomial approximation results in Theorem~\ref{thm:Nachbin} to show there exists a polynomial function that satisfies Eqns~\eqref{p 13}, \eqref{p 23} and \eqref{p 33}.
\end{enumerate}
\begin{lem} \label{lem: mollification of ZLF}
	Consider $f \in C^2(\R^n, \R)$ and $W$ as in Eqn.~\eqref{fun: ZLF}. Suppose there exists $\theta,\eta,R>0$ such that $||D^\alpha f(x)||_2<\theta$ for all $x \in B_R(x)$ and $||\alpha||_1 \le 2$, $B_\eta(0)$ is an exponentially stable set (Defn.~\ref{defn: asym and exp stab}) of the ODE~\eqref{eqn: ODE}, and $ROA_f \subset B_R(0)$. If $\lambda > \theta \eta^{-2 \beta}$ and $ \beta>\frac{\theta}{2\delta} +\frac{1}{2}$ then for any $\eps>0$ and $R_1>R$ there exists $J \in C^\infty (B_{R_1}(0), \R)$ such that
	\begin{align} \label{p 1}
	& \sup_{x \in B_{R_1}(0)} | J(x) - W_{\lambda, \beta}(x) | < \eps,\\ \label{p 2}
	& \nabla J(x)^Tf(x) <- \lambda (1-J(x))||x||_2^{2 \beta} + \eps  \text{ for all } x \in B_{R_1}(0), \\ \label{p 3}
	& J(x) =1  \text{ for all } x \in \partial B_R(0) \text{ and } J(0) \ge 0.
	\end{align}
\end{lem}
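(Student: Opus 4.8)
The plan is to obtain $J$ as a $\sigma$-mollification of the converse Lyapunov function itself: I would set $J := [W_{\lambda,\beta}]_\sigma$ for a sufficiently small $\sigma>0$, working on an ambient open ball $E := B_{R_2}(0)$ with $R_2 > R_1$. By Prop.~\ref{prop: ZLF is lip}, $W_{\lambda,\beta} \in Lip(\R^n,\R)$, so it is continuous (hence in $L^1(E,\R)$), bounded with $0 \le W_{\lambda,\beta} \le 1$, and its weak gradient agrees a.e.\ with the classical gradient and is bounded by the Lipschitz constant $K$ of Eqn.~\eqref{eqn: lip constant of ZLF}. Thus Prop.~\ref{prop:mollification} applies: for $\sigma < R_2 - R_1$ one gets $J \in C^\infty(B_{R_1}(0),\R)$ with $\nabla J = [\nabla W_{\lambda,\beta}]_\sigma$ on $B_{R_1}(0)$. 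Conditions \eqref{p 1} and the inequality $J(0)\ge 0$ in \eqref{p 3} then come for free: \eqref{p 1} is exactly Prop.~\ref{prop:mollification}(3) (uniform convergence of mollification for continuous functions on the compact set $\overline{B_{R_1}(0)}$), while $J(0)=\int \eta_\sigma(z) W_{\lambda,\beta}(-z)\,dz \ge 0$ since $W_{\lambda,\beta}\ge 0$ and $\eta_\sigma \ge 0$.

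The substantive step is the differential inequality \eqref{p 2}, and here I would lean on the a.e.\ PDE of Prop.~\ref{prop: ZLF PDE}, namely $\nabla W_{\lambda,\beta}(y)^T f(y) = g(y) := -\lambda\|y\|_2^{2\beta}(1 - W_{\lambda,\beta}(y))$, together with $\nabla J = [\nabla W_{\lambda,\beta}]_\sigma$. Writing $\nabla J(x)^T f(x) = \int_{B_\sigma(0)} \eta_\sigma(z)\,\nabla W_{\lambda,\beta}(x-z)^T f(x)\,dz$ and adding and subtracting $f(x-z)$ inside the integral splits it as $[g]_\sigma(x)$ plus a commutator error $E_\sigma(x) := \int \eta_\sigma(z)\,\nabla W_{\lambda,\beta}(x-z)^T\big(f(x)-f(x-z)\big)\,dz$. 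Since $\nabla W_{\lambda,\beta}$ is bounded by $K$ a.e.\ and $f$ is Lipschitz on $\overline{B_{R_2}(0)}$ with some constant $L_f$ (as $f\in C^2$), one obtains $|E_\sigma(x)| \le K L_f \sigma$. Because $g$ is continuous, $[g]_\sigma \to g$ uniformly on $\overline{B_{R_1}(0)}$ by Prop.~\ref{prop:mollification}(3); and rewriting $g(x) = -\lambda\|x\|_2^{2\beta}(1 - J(x)) + \lambda\|x\|_2^{2\beta}(W_{\lambda,\beta}(x)-J(x))$, the last term is bounded by $\lambda R_1^{2\beta}\sup_{B_{R_1}(0)}|W_{\lambda,\beta}-J|$, which vanishes as $\sigma\to0$ by \eqref{p 1}. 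Collecting these three $o(1)$ contributions gives $\nabla J(x)^T f(x) = -\lambda\|x\|_2^{2\beta}(1 - J(x)) + o(1)$ uniformly, so choosing $\sigma$ small forces the error below $\eps$ and yields \eqref{p 2}.

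The remaining and most delicate piece is the \emph{exact} boundary identity $J\equiv 1$ on $\partial B_R(0)$ in \eqref{p 3}. By Cor.~\ref{cor: ZLF characterizes ROA}, $W_{\lambda,\beta}\equiv 1$ on $\R^n\setminus ROA_f$, and since $ROA_f \subset B_R(0)$ this covers all of $\{x : \|x\|_2 \ge R\}$. However, a mollification evaluated at a point of $\partial B_R(0)$ averages over a ball protruding into $B_R(0)$, where $W_{\lambda,\beta}$ may be strictly below $1$, so $[W_{\lambda,\beta}]_\sigma$ need not equal $1$ there. The fix is to take $\sigma$ smaller than $\rho := \mathrm{dist}(ROA_f,\partial B_R(0))$, so that $W_{\lambda,\beta}\equiv 1$ on $B_\sigma(x)$ for every $x$ with $\|x\|_2 \ge R$, which forces $J(x) = \int \eta_\sigma(z)\,dz = 1$ on $\partial B_R(0)$.

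Consequently, I expect the main obstacle to be exactly this boundary requirement: it hinges on the separation $\rho>0$, i.e.\ on $\overline{ROA_f}$ being compactly contained in $B_R(0)$ rather than merely $ROA_f\subset B_R(0)$. Once that separation is secured, the whole proof reduces to selecting a single $\sigma>0$ small enough to satisfy \eqref{p 1}, the uniform error bound behind \eqref{p 2}, and $\sigma<\rho$ simultaneously, and then reading off \eqref{p 1}, \eqref{p 2}, \eqref{p 3} for $J = [W_{\lambda,\beta}]_\sigma$ on $B_{R_1}(0)$.
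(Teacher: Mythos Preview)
Your proposal is correct and follows essentially the same approach as the paper: set $J=[W_{\lambda,\beta}]_\sigma$ on an ambient ball $B_{R_2}(0)$, obtain \eqref{p 1} and $J(0)\ge 0$ directly from Prop.~\ref{prop:mollification} and positivity, and derive \eqref{p 2} by controlling the commutator between mollification and the operator $J\mapsto \nabla J^T f+\lambda(1-J)\|x\|_2^{2\beta}$ using the PDE of Prop.~\ref{prop: ZLF PDE} together with the Lipschitz bounds on $\nabla W_{\lambda,\beta}$ and $f$. Your decomposition for \eqref{p 2} routes through $[g]_\sigma\to g$ uniformly (with $g(y)=-\lambda\|y\|_2^{2\beta}(1-W_{\lambda,\beta}(y))$) rather than the paper's more explicit single bound, but this is an equivalent bookkeeping choice; and the separation issue you flag for \eqref{p 3} (needing $\mathrm{dist}(ROA_f,\partial B_R(0))>0$) is precisely the assertion the paper makes without further justification when it introduces its $\sigma_4$.
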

\begin{proof}
 Let $\eps>0$ and $R_2>R_1>R$. Since $W_{\lambda, \beta} \in Lip(\R^n, \R)$ (by Prop.~\ref{prop: ZLF is lip}) we know that by Theorem~\ref{thm: Rademacher theorem} that $W_{\lambda, \beta} \in W^{1, \infty}(\R^n, \R)$.

  For $\sigma>0$ let us denote the $\sigma$-mollification of $W_{\lambda, \beta}$ by $J_\sigma(x):=[W_{\lambda, \beta}]_\sigma(x)$. We note that the domain of $W_{\lambda, \beta}$ is $\R^n$. However, for mollification purposes we consider $W_{\lambda, \beta}$ over the restricted domain $B_{R_2}(0) \subset \R^n$.

  Let $\sigma_1:=\frac{R_2 - R_1}{2}$. It is clear that $B_{R_1}(0) \subset <B_{R_2}(0)>_\sigma$ for all $0<\sigma<\sigma_1$. Therefore, by Prop.~\ref{prop:mollification} we have that \linebreak $J_\sigma \in C^\infty(<B_{R_2}(0)>_\sigma, \R) \subset C^\infty(B_{R_1}(0), \R)$ for all $0<\sigma<\sigma_1$.

  We will now show there exists $\sigma>0$ such that Eqns~\eqref{p 1}, \eqref{p 2} and \eqref{p 3} hold.
	
	First we show Eqn.~\eqref{p 1} holds. By Prop.~\ref{prop:mollification} we know that there exists $\sigma_2>0$ such that for all $0<\sigma<\sigma_2$ we have that
	\begin{align*}
	\sup_{x \in B_{R_1}(0)} | J_\sigma (x) - W_{\lambda, \beta}(x) | < \eps.
	\end{align*}
	
	We now show Eqn.~\eqref{p 2} holds. Let us denote $r(x):=||x||_2^{2 \beta}$. It is clear using the triangle inequality and the fact that $||x-z||<2 R_1$ for all $x,z \in B_{R_1}(0)$ that
	\begin{align} \nonumber
	& r(x)-r(x-z)= (||x||_2-||x-z||_2)\sum_{k=0}^{2 \beta -1} ||x||_2^{2 \beta -1 -k} ||x-z||_2^{k} \\ \label{r ineq}
	& \le \left({R_1}^{2 \beta -1} \sum_{k=0}^{2 \beta -1} 2^k \right) ||z||_2 \text{ for all } x,z \in B_{R_1}(0).
	\end{align}
	  Let $\sigma_3:=\frac{\eps}{K L_f + \lambda\left( {R_1}^{2 \beta -1} \sum_{k=0}^{2 \beta -1} 2^k \right) }$ where $K$ (given in Eqn.~\eqref{eqn: lip constant of ZLF}) is the Lipschitz constants of $W_{\lambda, \beta}$ and $L_f$ is the Lipschitz constants of $f$. For $0< \sigma< \sigma_3$, using Prop.~\ref{prop:mollification} and the fact $W_{\lambda, \beta}$ satisfies Eqn.~\eqref{PDE: ZLF}, we have that \begin{align} \label{mollification diss ineq}
	& \nabla J_\sigma(x)^Tf(x) + \lambda (1-J_\sigma(x))||x||_2^{2 \beta}\\  \nonumber
	& =\nabla [W_{\lambda, \beta}]_\sigma(x)^Tf(x) + \lambda (1-[W_{\lambda, \beta}]_\sigma(x))r(x)\\ \nonumber
	& =[\nabla W_{\lambda, \beta}]_\sigma (x)^Tf(x) + \lambda (1-[W_{\lambda, \beta}]_\sigma(x))r(x)\\ \nonumber
	& = ([\nabla W_{\lambda, \beta}^T f ]_\sigma (x) + \lambda[r]_\sigma(x) - \lambda [W_{\lambda, \beta} r]_\sigma(x)) \\ \nonumber
	& \qquad + [\nabla W_{\lambda, \beta}]_\sigma (x)^Tf(x) -[\nabla W_{\lambda, \beta}^T f ]_\sigma (x) + \lambda r(x) - \lambda [r]_\sigma(x) \\ \nonumber
	& \qquad + \lambda [W_{\lambda, \beta} r]_\sigma(x) -\lambda [W_{\lambda, \beta}]_\sigma(x) r(x)\\ \nonumber
	& = [\nabla W_{\lambda, \beta}^T f + \lambda (1-W_{\lambda, \beta})r]_\sigma(x) \\ \nonumber
	& \qquad + [\nabla W_{\lambda, \beta}]_\sigma (x)^Tf(x) -[\nabla W_{\lambda, \beta}^T f ]_\sigma (x) \\ \nonumber
	& \qquad + \lambda(1- [W_{\lambda, \beta}]_\sigma) r(x) - \lambda[(1- W_{\lambda, \beta})r]_\sigma(x)\\  \nonumber
	& =  \int_{B_\sigma(0)} \eta_{\sigma}(z) \nabla W_{\lambda, \beta}(x-z)^T( f(x)- f(x-z) ) dz\\ \nonumber
	& \qquad + \lambda \int_{B_\sigma(0)} \eta_{\sigma}(z)  (1-W_{\lambda, \beta}(x-z))(  r(x) - r(x-z) ) dz \\ \nonumber
	& \le \esssup_{x \in \R^n}\{||\nabla W_{\lambda, \beta}(x)||_2\} \int_{B_\sigma(0)} \eta_{\sigma}(z) || f(x)- f(x-z)||_2 dz  \\ \nonumber
	& \qquad + \lambda  \int_{B_\sigma(0)} \eta_{\sigma}(z)  | r(x) - r(x-z) | dz \\  \nonumber
	& \le \left( K L_f  + \lambda  {R_1}^{2 \beta -1} \sum_{k=0}^{2 \beta -1} 2^k \right) \int_{B_\sigma(0)} \eta_{\sigma}(z)  ||z||_2 dz   \\  \nonumber
	& \le \left( K L_f + \lambda {R_1}^{2 \beta -1} \sum_{k=0}^{2 \beta -1} 2^k \right) \sigma < \eps \text{ for all } x \in B_{R_1}(0).
	\end{align}
	Where the first inequality in Eqn.~\eqref{mollification diss ineq} follows by the Cauchy Swartz inequality and the second inequality follows by the fact $\esssup_{x \in \R^n}\{||\nabla W_{\lambda, \beta}(x)||_2\} \le K$ (By Rademacher's theorem stated in Theorem~\ref{thm: Rademacher theorem}) and Eqn.~\eqref{r ineq}.
	
	We now show Eqn.~\eqref{p 3} holds. Since $ROA_f \subset B_R(0)$ and $ROA_f$ is an open set (by Lemma~\ref{lem: ROA open}) it follows that for all $x \in \partial B_R(0)$ we have $x \notin ROA_f$ and thus $W_{\lambda,\beta}(x)= 1$ for all $x \in \partial B_R(0)$. Now, there exists a sufficiently small $\sigma_4>0$ such that $B_{\sigma_4}(x) \cap ROA_f = \emptyset$ for all $x \in \partial B_R(0)$. Thus for $0<\sigma<\sigma_4$
	\begin{align*}
	J_\sigma(x)= \int_{B_\sigma(0)} \eta_\sigma(z) W(x-z) dz = \int_{B_\sigma(0)} \eta_\sigma(z)  dz =1,
	\end{align*}
	for all $x \in \partial B_R(0)$.
	
	Moreover, $\eta_{\sigma}(x) \ge 0$ and $W_{\lambda, \beta}(x) \ge 0$ for all $\sigma>0$ and $x \in \R^n$ so therefore $J_\sigma(x) \ge 0$ for all $\sigma>0$ and $x \in \R^n$. Thus $J_\sigma(0) \ge 0$ for all $\sigma>0$.
	
	In conclusion for $\sigma<\min\{\sigma_1, \sigma_2, \sigma_3, \sigma_4 \}$ we have that $J_\sigma$ satisfies Eqns~\eqref{p 1}, \eqref{p 2} and \eqref{p 3}.
\end{proof}
\begin{prop} \label{prop: parition of unity ZLF}
	Consider $f \in C^2(\R^n, \R)$ and $W$ as in Eqn.~\eqref{fun: ZLF}. Suppose there exists $\theta,\eta,R>0$ such that $||D^\alpha f(x)||_2<\theta$ for all $x \in B_R(x)$ and $||\alpha||_1 \le 2$, $B_\eta(0)$ is an exponentially stable set (Defn.~\ref{defn: asym and exp stab}) of the ODE~\eqref{eqn: ODE}, and $ROA_f \subset B_R(0)$. If $\lambda > \theta \eta^{-2 \beta}$ and $ \beta>\frac{\theta}{2\delta} +\frac{1}{2}$ then for any $\eps>0$ and $R_1>R$ there exists $J \in C^\infty(B_{R_1}(0), \R)$ such that
	\begin{align} \label{p 12}
	& \sup_{x \in B_{R_1}(0)} | J(x) - W_{\lambda, \beta}(x) | < \eps,\\ \label{p 22}
	& \nabla J(x)^Tf(x) \le - \lambda (1-J(x))||x||_2^{2 \beta} + \eps||x||_2^{2 \beta}  \text{ for } x \in B_{R_1}(0), \\ \label{p 32}
	& J(x) =1  \text{ for all } x \in \partial B_R(0) \text{ and } J(0) = 0.
	\end{align}
\end{prop}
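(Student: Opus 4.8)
The plan is to repair the mollified function supplied by Lemma~\ref{lem: mollification of ZLF} only where it is deficient, namely in a neighbourhood of the origin. Away from the origin the two gaps between the Lemma and the Proposition are harmless: on $\{||x||_2 \ge \rho\}$ one has $||x||_2^{2\beta}\ge \rho^{2\beta}$, so the Lemma's \emph{additive} slack $\eps_1$ is absorbed into the \emph{weighted} slack $\eps ||x||_2^{2\beta}$ simply by taking $\eps_1 \le \eps \rho^{2\beta}$, and the requirement $J(0)=0$ concerns only the origin. The whole difficulty is therefore concentrated in a fixed ball $B_\rho(0)$, where the weight $||x||_2^{2\beta}$ degenerates and the Lemma's uniform error ceases to dominate it.

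To handle $B_\rho(0)$ I would build an explicit $C^\infty$ \emph{model} of $W_{\lambda,\beta}$ near the origin. Since $B_\eta(0)$ is exponentially stable and $f\in C^2$, the linearisation $A:=Df(0)$ is Hurwitz, so $U(x):=\int_0^\infty ||e^{At}x||_2^{2\beta}\,dt$ is a strictly positive homogeneous polynomial of degree $2\beta$ (each integrand $(x^T e^{A^Tt}e^{At}x)^\beta$ is a degree-$2\beta$ form and the $t$-integral converges), hence $C^\infty$. Setting $J_2:=1-\exp(-\lambda U)$ gives $J_2 \in C^\infty$ with $J_2(0)=0$. Differentiating $U$ along the linear flow yields $\nabla U(x)^T A x = -||x||_2^{2\beta}$, and writing $f(x)=Ax+g(x)$ with $g(x)=O(||x||_2^2)$ and $\nabla U(x)=O(||x||_2^{2\beta-1})$ one gets $\nabla U(x)^T f(x) = -||x||_2^{2\beta}+O(||x||_2^{2\beta+1})$, whence
\begin{align*}
\nabla J_2(x)^T f(x)&=\lambda e^{-\lambda U(x)}\nabla U(x)^T f(x)\\
&\le -\lambda(1-J_2(x))||x||_2^{2\beta}+\eps ||x||_2^{2\beta}
\end{align*}
on $B_\rho(0)$ once $\rho$ is small. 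The crucial extra estimate is the \emph{weighted} closeness $|J_2(x)-W_{\lambda,\beta}(x)|=O(||x||_2^{2\beta+1})$ on $B_\rho(0)$: it follows from the perturbation bound $||\phi_f(x,t)-e^{At}x||_2 = O(||x||_2^2 e^{-\delta'' t})$ (a Gronwall estimate on the integral equation, using the exponential decay of both the linear and the nonlinear flow), which integrates to $V_\beta(x)-U(x)=O(||x||_2^{2\beta+1})$ and hence the same bound for $J_2-W_{\lambda,\beta}$.

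Finally I would patch the pieces with a partition of unity. Take $\psi \in C^\infty$ with $\psi \equiv 1$ on $B_{\rho/2}(0)$ and $\psi \equiv 0$ off $B_\rho(0)$, let $J_1$ be the Lemma's function (with its additive slack $\eps_1$ chosen very small, \emph{after} $\rho$ is fixed), and set $J:=\psi J_2 + (1-\psi)J_1$. Because $\psi(1-J_2)+(1-\psi)(1-J_1)=1-J$, the convex-combination part of $\nabla J^T f$ already satisfies the required weighted inequality with slack $\max(\eps,\eps_1\rho^{-2\beta})$; what remains is the cross term $(J_2-J_1)\nabla\psi^T f$, supported on the fixed annulus $\{\rho/2 \le ||x||_2 \le \rho\}$. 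There $|\nabla\psi|\le C\rho^{-1}$, $|f|\le L_f ||x||_2$, and $|J_2-J_1|\le |J_2-W_{\lambda,\beta}|+|W_{\lambda,\beta}-J_1| = O(\rho^{2\beta+1})+\eps_1$, so the cross term is $O(\rho^{2\beta+1})+O(\eps_1)$, which is $\le \eps ||x||_2^{2\beta}$ on the annulus once $\rho$ and then $\eps_1$ are small enough. The side conditions are then immediate: $\psi\equiv 0$ near $\partial B_R(0)$ gives $J=J_1=1$ there, $\psi(0)=1$ and $J_2(0)=0$ give $J(0)=0$, and $|J-W_{\lambda,\beta}|<\eps$ holds by the same convex-combination bound. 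The main obstacle is precisely the weighted near-origin estimate $|J_2-W_{\lambda,\beta}|=O(||x||_2^{2\beta+1})$: it is the extra power of $||x||_2$ beyond the common $O(||x||_2^{2\beta})$ size of the two functions that defeats the cross term, and obtaining it is where the smoothness of $f$ and the Hurwitz/exponential-stability structure are genuinely consumed.
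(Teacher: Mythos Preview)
Your argument is correct and takes a genuinely different route from the paper's proof.

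The paper never introduces the linearised flow. Instead it covers $B_{R_1}(0)\setminus\{0\}$ by the nested annuli $U_m=B_{R_1}(0)\setminus (B_{1/m}(0))^{cl}$, takes a partition of unity $\{\psi_m\}$ subordinate to this cover, and for each $m$ invokes Lemma~\ref{lem: mollification of ZLF} with additive slack proportional to $\eps/m^{2\beta}$ to produce $J_m$; the candidate is the infinite sum $J=\sum_m\psi_m J_m$. On $U_m$ one has $\|x\|_2\ge 1/m$, so $\eps/m^{2\beta}\le\eps\|x\|_2^{2\beta}$, which turns the additive slack into the weighted one, and the cross terms $\sum_m J_m\nabla\psi_m^Tf$ are handled by subtracting the identity $W_{\lambda,\beta}\sum_m\nabla\psi_m^Tf=0$ and using the uniform smallness of $|J_m-W_{\lambda,\beta}|$. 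The boundary value $J(0)=0$ falls out for free because $0\notin U_m$ for every $m$, hence every $\psi_m(0)=0$.

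Your construction, by contrast, uses a \emph{single} two-piece cut at a fixed scale $\rho$ and replaces the inner piece by an explicit model built from the Hurwitz linearisation. The price is the extra estimate $|J_2-W_{\lambda,\beta}|=O(\|x\|_2^{2\beta+1})$, which you correctly trace back to a Gronwall perturbation bound $\|\phi_f(x,t)-e^{At}x\|_2=O(\|x\|_2^{2}e^{-\delta''t})$; this is standard under the stated $C^2$ and exponential-stability hypotheses, and does integrate to $V_\beta-U=O(\|x\|_2^{2\beta+1})$ as you claim. What you gain is that smoothness and the value $J(0)=0$ at the origin are manifest (since $J=J_2$ there), and the whole cross-term analysis lives on a single fixed annulus rather than on infinitely many. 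What the paper's approach gains is economy of ingredients: it uses nothing beyond Lemma~\ref{lem: mollification of ZLF} as a black box, and in particular needs no separate analysis of the linearised converse Lyapunov function $U$.
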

\begin{proof}
Consider the sets $U_m= B_{R_1}(0)/(B_{1/m}(0))^{cl}$ for $m \in \N$. It is clear $\{U_m\}_{m \in \N}$ form an open cover (Defn.~\ref{defn: open cover}) of $B_{R_1}(0)/\{0\}$, that is $\cup_{m \in \N} U_m = B_{R_1}(0)/\{0\}$. By Theorem~\ref{thm: partition of unity} (found in Appendix~\ref{sec: appendix mollification}) there exists a partition of unity, we denote by $\{\psi_m \}_{m \in \N} \subset C^\infty(\R^n, \R)$, subordinate to the open cover $\{U_m\}_{m \in \N}$.

Let $\eps>0$. For each $m \in \N$ it was shown in Lemma~\ref{lem: mollification of ZLF} that there exists $J_m \in C^\infty(B_{R_1}(0), \R)$ such that
	\begin{align} \label{Jn uniform ineq}
& \sup_{x \in B_{R_1}(0)} | J_m(x) - W_{\lambda, \beta}(x) |\\ \nonumber
&  \qquad \qquad \qquad < \frac{\eps}{2^{m+1}(\sup_{x \in U_m}\{ |\nabla \psi_m(x)^T f(x)|\} +1)m^{2 \beta} },\\ \nonumber
& \nabla J_m(x)^Tf(x) <- \lambda (1-J_m(x))||x||_2^{2 \beta} + \frac{\eps}{2 m^{2 \beta}} \\ \label{Jn diss ineq}
& \qquad \qquad \qquad \qquad \qquad \qquad \qquad \text{ for all } x \in B_{R_1}(0), \\  \label{Jn BC}
& J_m(x) =1  \text{ for all } x \in \partial B_R(0) \text{ and } J_m(0) \ge 0.
\end{align}
Note, $\sup_{x \in U_m}\{ |\nabla \psi_m(x)^T f(x)|\}< \infty$ for each $m \in \N$ since $U_m$ is bounded and the function $\psi_m(x)^T f(x)$ is continuous in $x$.

We now consider the function $J(x):= \sum_{m=1}^\infty \psi_m(x) J_m(x)$. We first note that $J \in C^\infty( B_{R_1}(0), \R)$. This is due to the fact that $J_m \in C^\infty( B_{R_1}(0), \R)$ and $\psi_m \in C^\infty( \R^n, \R)$ for all $m \in \N$. Moreover, for any $x \in \R^n$ Theorem~\ref{thm: partition of unity} (found in Section~\ref{sec: appendix}) shows that there is an open set $S \subset \R^n$ containing $x \in \R^n$ such that only finitely many $\psi_m$'s are non-zero over $S$. Thus $J$ is a finite sum of $C^\infty( B_{R_1}(0), \R)$ functions over $S$ and thus differentiable at $x$. Since $x \in \R^n$ was arbitrarily chosen it follows $J \in C^\infty( B_{R_1}(0), \R)$.

We now show $J$ satisfies Eqn.~\eqref{p 12}. Using the fact $\sum_{m=1}^\infty \psi_m(x)=1$ for all $x \in B_{R_1}(0)/\{0\}$ and $\sum_{m=1}^\infty \psi_m(0)=0$ together with Eqn.~\eqref{Jn uniform ineq} we have that
\begin{align*}
&| J(x) - W_{\lambda, \beta}(x) |= \left|\sum_{m=1}^\infty \psi_m(x) J_m(x) - W_{\lambda, \beta}(x) \right|\\
&  \le \hspace{-0.1cm} \sum_{m=1}^\infty \hspace{-0.1cm} \psi_m(x) |J_m(x) \hspace{-0.05cm}- \hspace{-0.05cm} W_{\lambda, \beta}(x)| \hspace{-0.1cm} \le \hspace{-0.2cm} \sum_{m=1}^\infty  \frac{\psi_m(x) \eps}{2}< \eps \text{ for } x \in B_{R_1}(0).
\end{align*}

We now show $J$ satisfies Eqn.~\eqref{p 22}. Before doing so we note that $\sum_{m=1}^\infty \psi_m(x)=1$ for all $x \in B_{R_1}(0) /\{0\}$. Since only finitely many $\psi_m$'s are non-zero for each $x \in B_{R_1}(0) /\{0\}$ it follows $\sum_{m=1}^\infty \psi_m(x)$ is a finite sum of infinitely differentiable functions. Therefore, we can interchange the derivative and the summation to show $0=\frac{\partial}{\partial x_i} 1 = \frac{\partial}{\partial x_i} \sum_{m=1}^\infty \psi_m(x) = \sum_{m=1}^\infty \frac{\partial}{\partial x_i} \psi_m(x)$ for all $x \in B_{R_1}(0)/\{0\}$ and $i \in \{1,...,n\}$. Thus it follows $ \sum_{m=1}^\infty \nabla \psi_m(x) = [0,...,0]^T \in \R^n$ for all $x \in B_{R_1}(0)/\{0\}$. Hence,
\begin{align} \label{zero identity}
W_{\lambda \beta}(x) \sum_{m=1}^\infty  \nabla \psi_m(x)^Tf(x)=0 \text{ for all } x \in B_{R_1}(0)/\{0\}.
\end{align}

For $x \in B_{R_1}(0)/\{0\}$ let us denote $I_x:=\{m \in \N: x \in U_m  \}$. Note, $\{U_m\}_{m \in \N}$ forms an open cover for $B_{R_1}(0)/\{0\}$ so $I_x \ne \emptyset$ for all $x \in B_{R_1}(0)/\{0\}$.

It is clear that for $x \in B_{R_1}(0)/\{0\}$ and $m \in I_x$ that $x \in U_m= B_{R_1}(0)/B_{\frac{1}{m}}(0)$ and so $||x||_2 \ge \frac{1}{m}$ implying $\frac{1}{m^{2\beta}} \le ||x||_2^{2\beta}$. Therefore,
\begin{align} \label{Ix1}
\sup_{m \in I_x}\left\{\frac{1}{m^{2\beta}}\right\} \le ||x||_2^{2\beta} \text{ for all } x \in B_{R_1}(0)/\{0\}.
\end{align}
Moreover, for $x \in B_{R_1}(0)/\{0\}$ and $m \notin I_x$ we have that $x \notin U_m$. Thus, since $\{x \in \R^n: \psi_m(x) \ne 0\} \subset U_m$ (by Theorem~\ref{thm: partition of unity} found in Appendix~\ref{sec: appendix}) we have that
\begin{align} \label{Ix2}
\psi_m(x)=0 \text{ for all } x \in B_{R_1}(0)/\{0\} \text{ and }  m \notin I_x.
\end{align}

Now, using Eqns~\eqref{Jn uniform ineq}, \eqref{Jn diss ineq}, \eqref{zero identity}, \eqref{Ix1} and \eqref{Ix2}, and the fact $\sum_{m=1}^\infty \frac{1}{2^m}=1$ we have that,
\begin{align} \label{123}
& \nabla J(x)^Tf(x) + \lambda (1-J(x))||x||_2^{2 \beta} \\ \nonumber
& = \sum_{m=1}^\infty \psi_m(x) \left(\nabla J_m(x)^Tf(x) + \lambda (1-J_m(x))||x||_2^{2 \beta} \right) \\ \nonumber
& \qquad + \sum_{m=1}^\infty J_m(x) \nabla \psi_m(x)^Tf(x) - W_{\lambda \beta}(x) \sum_{m=1}^\infty  \nabla \psi_m(x)^Tf(x)\\ \nonumber
& = \sum_{m \in I_x} \psi_m(x) \left(\nabla J_m(x)^Tf(x) + \lambda (1-J_m(x))||x||_2^{2 \beta} \right) \\ \nonumber
& \qquad + \sum_{m \in I_x} (J_m(x) -W_{\lambda \beta}(x)) \nabla \psi_m(x)^Tf(x) \\ \nonumber
& \le \sum_{m \in I_x} \psi_m(x) \frac{\eps}{2 m^{2 \beta}} + \sum_{m \in I_x} \frac{\eps}{2^{m+1} m^{2 \beta}} \\ \nonumber
&\le \eps \sup_{m \in I_x} \left\{ \frac{1}{ m^{2 \beta}} \right\} \left( \frac{1}{2} \sum_{m \in I_x} \psi_m(x) + \frac{1}{2}\sum_{m \in I_x}\frac{1}{2^m}  \right)  \\ \nonumber
&  \le \eps \sup_{m \in I_x} \left\{ \frac{1}{m^{2 \beta}} \right\}  \le \eps ||x||_2^{2\beta} \text{ for all } x \in B_{R_1}(0)/\{0\}.
\end{align}

Eqn.~\eqref{123} shows $J$ satisfies Eqn.~\eqref{p 22} for $x \in B_{R_1}(0)/\{0\}$. We still need to show $J$ satisfies Eqn.~\eqref{p 22} for $x=0$. Let us denote the function $F(x):=\nabla J(x)^Tf(x) + \lambda (1-J(x))||x||_2^{2 \beta}$. To show $J$ satisfies Eqn.~\eqref{p 22} at $x=0$ we must show $F(0) \le 0$. We first note that $F \in C^2(B_{R_1}(0), \R)$ since $J \in C^\infty(B_{R_1}(0), \R)$, $f \in C^2(\R^n, \R)$ and $||x||_2^{2 \beta} \in C^2(\R^n, \R)$. Thus $F \in LocLip(\R^n, \R)$. Therefore,
\begin{align} \label{1234}
|F(0)-F(x)| \le L_F ||x||_2 \text{ for all } x \in B_{R_1}(0),
\end{align} where $L_F$ is the Lipschitz constant of $F$. Then, Eqn.~\eqref{123} together with Eqn.~\eqref{1234} implies that
\begin{align} \label{12345}
& F(0) \le L_F ||x||_2 + F(x) \le L_F ||x||_2 + \eps ||x||_2^{2 \beta}\\ \nonumber
&\qquad \qquad \qquad \qquad  \text{ for all } x \in B_{R_1}(0)/\{0\}.
\end{align}
Now, for contradiction suppose the negation of $F(0) \le 0$, that is there exists $a>0$ such that $F(0)\ge a$. Considering $x=\min\{\frac{a}{ 3(L_F+1) \sqrt{n}},\frac{1}{\sqrt{n}}(\frac{a}{ 3 \eps})^{1/\beta}, \frac{R_1}{\sqrt{n}}  \}[1,...,1]^T \in B_{R_1}(0)/\{0\} \subset \R^n$ and using Eqn.~\eqref{12345} we have that
\begin{align*}
a \le F(0) \le \frac{2}{3}a,
\end{align*}
providing a contradiction. Therefore, $F(0) \le 0$ and so $J$ satisfies Eqn.~\eqref{p 22} for all $x \in B_{R_1}(0)$.

We now show $J$ satisfies Eqn.~\eqref{p 32}. Let $x \in \partial B_R(0)$. By Eqn.~\eqref{Jn BC} we have that $J_m(x)=1$ for all $m \in \N$. Therefore, using the fact $\sum_{m=1}^\infty \psi_m(x) =1$ for all $x \in B_{R_1}(0)/\{0\}$ and $\partial B_{R}(0) \subset  B_{R_1}(0)/\{0\}$ since $R_1>R$, we have that
\begin{align*}
J(x)= \sum_{m=1}^\infty \psi_m(x) J_m(x) = \sum_{m=1}^\infty \psi_m(x) =1.
\end{align*}
Moreover, $0 \notin B_{R_1}(0)/\{0\}$ so $\psi_m(0)=0$ for all $m \in \N$. Hence $J(0)= \sum_{m=1}^\infty \psi_m(x) J_m(x)=0$.
\end{proof}

\begin{thm} \label{thm: existence of feasible solution to SOS}
	Consider $f \in C^2(\R^n, \R)$ and $W$ as in Eqn.~\eqref{fun: ZLF}. Suppose there exists $\theta,\eta,R>0$ such that $||D^\alpha f(x)||_2<\theta$ for all $x \in B_R(x)$ and $||\alpha||_1 \le 2$, $B_\eta(0)$ is an exponentially stable set (Defn.~\ref{defn: asym and exp stab}) of the ODE~\eqref{eqn: ODE}, and $ROA_f \subset B_R(0)$. If $\lambda > \theta \eta^{-2 \beta}$ and $ \beta>\frac{\theta}{2\delta} +\frac{1}{2}$ then for any $\eps>0$ there exists $P \in \mcl P(\R^n, \R)$ such that
	\begin{align} \label{p 13}
	& \sup_{x \in B_R(0)} | P(x) - W_{\lambda, \beta}(x) | < \eps,\\ \label{p 23}
	& \nabla P(x)^Tf(x) <- \lambda (1-P(x))||x||_2^{2 \beta}   \text{ for all } x \in B_R(0), \\ \label{p 33}
	& P(x) >1  \text{ for all } x \in \partial B_R(0) \text{ and } P(0) > 0.
	\end{align}
\end{thm}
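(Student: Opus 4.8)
The plan is to obtain the desired polynomial $P$ from the smooth function supplied by Proposition~\ref{prop: parition of unity ZLF} by a single application of the Weighted Polynomial Approximation theorem (Theorem~\ref{thm:Nachbin}), followed by a small upward constant shift that simultaneously promotes the three inequalities to strict ones. The guiding observation is that Proposition~\ref{prop: parition of unity ZLF} already produces a $C^\infty$ function whose dissipation defect is controlled by $\eps\|x\|_2^{2\beta}$ rather than by a uniform constant, and that this $\|x\|_2^{2\beta}$-weighting is exactly what a constant shift can exploit.

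First I would fix $\eps>0$ and an auxiliary radius $R_1>R$. Applying Proposition~\ref{prop: parition of unity ZLF} with an internal tolerance $\eps_0>0$ (to be fixed last) yields $J \in C^\infty(B_{R_1}(0),\R)$ with $\sup_{B_{R_1}(0)}|J - W_{\lambda,\beta}|<\eps_0$, the weighted dissipation bound $\nabla J^T f \le -\lambda(1-J)\|x\|_2^{2\beta}+\eps_0\|x\|_2^{2\beta}$, and the exact data $J|_{\partial B_R(0)}=1$ and $J(0)=0$. Since $J$ is $C^\infty$ it is in particular $C^{2\beta+2}$; multiplying by a cutoff $\chi\in C^\infty(\R^n,\R)$ that equals $1$ on a neighbourhood of $\overline{B_R(0)}$ and vanishes outside $B_{R_1}(0)$, I may regard $J$ as an element of $C^{2\beta+2}(\R^n,\R)$ without altering its values or gradient on $\overline{B_R(0)}$. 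Theorem~\ref{thm:Nachbin}, applied on $E=B_{R_1}(0)$ with compact set $K=\overline{B_R(0)}$ and tolerance $\eps_1>0$, then produces a single polynomial $g\in\mcl P(\R^n,\R)$ with
\begin{align*}
|J(x)-g(x)|<\eps_1\|x\|_2^{2\beta},\qquad \|\nabla J(x)-\nabla g(x)\|_2<\eps_1\|x\|_2^{2\beta}\quad\text{for all } x\in\overline{B_R(0)}.
\end{align*}

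The decisive step is to set $P:=g+\gamma$ for a small constant $\gamma>0$ and to check the three conclusions. For the dissipation inequality I would expand
\begin{align*}
\nabla P^T f+\lambda(1-P)\|x\|_2^{2\beta}=\big[\nabla J^T f+\lambda(1-J)\|x\|_2^{2\beta}\big]+(\nabla g-\nabla J)^T f+\lambda(J-g)\|x\|_2^{2\beta}-\lambda\gamma\|x\|_2^{2\beta},
\end{align*}
and bound each term by a multiple of $\|x\|_2^{2\beta}$: the bracket by $\eps_0\|x\|_2^{2\beta}$ (Proposition~\ref{prop: parition of unity ZLF}), the second term by $\eps_1 M_f\|x\|_2^{2\beta}$ with $M_f:=\sup_{\overline{B_R(0)}}\|f\|_2$ (Cauchy--Schwarz together with Theorem~\ref{thm:Nachbin}), and the third by $\lambda\eps_1 R^{2\beta}\|x\|_2^{2\beta}$. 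Collecting terms gives the bound $(\eps_0+\eps_1 M_f+\lambda\eps_1 R^{2\beta}-\lambda\gamma)\|x\|_2^{2\beta}$, which is strictly negative for $x\neq 0$ as soon as $\gamma$ exceeds $(\eps_0+\eps_1 M_f+\lambda\eps_1 R^{2\beta})/\lambda$. The boundary condition holds because on $\partial B_R(0)$ we have $g>J-\eps_1 R^{2\beta}=1-\eps_1 R^{2\beta}$, so $P=g+\gamma>1$ once $\gamma>\eps_1 R^{2\beta}$; the interior condition is cleaner still, since the weighted error vanishes at the origin, forcing $g(0)=J(0)=0$ and hence $P(0)=\gamma>0$. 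Finally $|P-W_{\lambda,\beta}|\le|g-J|+\gamma+|J-W_{\lambda,\beta}|\le\eps_1 R^{2\beta}+\gamma+\eps_0$ on $\overline{B_R(0)}$, which is below $\eps$ provided the three quantities are small.

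All constraints on $\gamma$ are lower bounds proportional to $\eps_0$ and $\eps_1$, so the order of selection is: first shrink $\eps_0$ and $\eps_1$, then choose $\gamma$ in the resulting non-empty window, small enough for the $L^\infty$ estimate. The main obstacle is precisely the dissipation inequality near the origin: since $f(0)=0$ and $\|x\|_2^{2\beta}$ vanishes at $0$, the quantity $\nabla P^T f+\lambda(1-P)\|x\|_2^{2\beta}$ is identically zero at $x=0$, so the strict inequality can only be secured on the punctured ball $B_R(0)\setminus\{0\}$. A uniform approximation of $J$ and $\nabla J$ would fail here, because the artificial negative margin $-\lambda\gamma\|x\|_2^{2\beta}$ produced by the shift degenerates at the origin; it is exactly the $\|x\|_2^{2\beta}$-weighted control furnished by Theorem~\ref{thm:Nachbin} that makes the approximation error degenerate at the same rate, letting the constant shift dominate throughout a full punctured neighbourhood of $0$.
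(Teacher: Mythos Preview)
Your proposal is correct and follows essentially the same route as the paper: invoke Proposition~\ref{prop: parition of unity ZLF} for a smooth $J$ with $\|x\|_2^{2\beta}$-weighted dissipation defect, apply Theorem~\ref{thm:Nachbin} to obtain a polynomial approximant, then shift by a positive constant to secure the strict inequalities. The paper carries out the same three steps, choosing specific constants $\eps_0=\eps/a$, $\eps_1=\eps/(aR^{2\beta})$, and $\gamma=(a-2)\eps/a$ for an explicit $a$, whereas you leave the parameter selection implicit; your added care with the cutoff to globalise $J$ before invoking Theorem~\ref{thm:Nachbin}, and your explicit remark that the dissipation inequality is merely $\le 0$ at the origin, are both points the paper glosses over (its final display also ends with $\le 0$, not $<0$).
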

\begin{proof} Let $\eps>0$ and $R_1>R$. By Prop.~\ref{prop: parition of unity ZLF} there exists $J \in C^\infty(B_{R_1}(0), \R)$ that satisfies
	\begin{align} \label{p12}
	& \sup_{x \in B_{R_1}(0)} | J(x) - W_{\lambda, \beta}(x) | < \frac{\eps}{a},\\ \label{p22}
	& \nabla J(x)^Tf(x) \le - \lambda (1-J(x))||x||_2^{2 \beta} + \frac{\eps}{a}||x||_2^{2 \beta}  \text{ for } x \in B_{R_1}(0), \\ \label{p32}
	& J(x) =1  \text{ for all } x \in \partial B_R(0) \text{ and } J(0) = 0,
	\end{align}
	where 
	\begin{align} \label{a}
a:= \max \left\{ 3,\frac{\sup_{x \in B_R(0)} ||f(x)||_2}{ \lambda R}  +  R^{-2 \beta} + \frac{1}{\lambda} +2  \right\}.
	\end{align}
	
Now, Theorem~\ref{thm:Nachbin}, found in Section~\ref{sec: appendix mollification}, shows there exists $\tilde{P} \in \mcl P(\R^n , \R)$ such that
	\begin{align} \label{J close to p}
	& |J(x)-\tilde{P}(x)| < \frac{\eps}{a R^{2 \beta}} ||x||_2^{2\beta} \text{ for all } x \in  (B_{R}(0))^{cl}, \\ \label{J nabla t close to P}
	& ||\nabla J(x)- \nabla \tilde{P}(x)||_2< \frac{\eps}{a R^{2 \beta}} ||x||_2^{2\beta} \text{ for all } x \in  (B_{R}(0))^{cl}.
	\end{align}

	Let $P(x):=\tilde{P}(x) + \frac{a-2}{a} \eps \in \mcl P(\R^n, \R)$. We will now show $P$ satisfies Eqns~\eqref{p 13}, \eqref{p 23} and \eqref{p 33}.
	
		We first show Eqn.~\eqref{p 13} holds. Using the triangle inequality along with Eqns~\eqref{p12} and \eqref{J close to p} we have that
		\begin{align*}
		& |P(x)-W_{\lambda,\beta}(x)| \le 	|\tilde{P}(x)-W_{\lambda,\beta}(x)| + \frac{a-2}{a} \eps\\
		& \le  |\tilde{P}(x)-J(x)| + |J(x)-W_{\lambda,\beta}(x)| + \frac{a-2}{a} \eps\\
		& \le \frac{\eps}{a} + \frac{\eps}{a} + \frac{a-2}{a} \eps = \eps.
		\end{align*}
	
	We now show Eqn.~\eqref{p 23} holds. Using Eqns~\eqref{p22}, \eqref{a}, \eqref{J close to p}, and \eqref{J nabla t close to P} we have that
	\begin{align*}
	& \nabla P(x)^Tf(x) + \lambda (1-P(x))||x||_2^{2 \beta}\\
	& \le \nabla \tilde{P}(x)^Tf(x) + \lambda (1-\tilde{P}(x))||x||_2^{2 \beta} - \lambda\frac{\eps(a-2)}{a} ||x||_2^{2 \beta}  \\
	& \qquad - \nabla J(x)^Tf(x) - \lambda (1-J(x))||x||_2^{2 \beta} + \frac{\eps}{a}||x||_2^{2 \beta} \\
	& = (\nabla \tilde{P}(x) -\nabla J(x))^T f(x) + \lambda (J(x)- \tilde{P}(x) ) ||x||_2^{2 \beta} \\
	&\qquad +  \frac{\eps}{a}\bigg(1 - \lambda(a-2) \bigg)||x||_2^{2 \beta} \\
	& \le || \nabla \tilde{P}(x) -\nabla J(x)||_2 ||f(x)||_2 + \lambda R^{-2 \beta} \frac{\eps}{a} ||x||_2^{2 \beta} \\
	& \qquad + \frac{\eps}{a}(1 - \lambda(a-2))||x||_2^{2 \beta} \\
	& \le \bigg( \frac{\sup_{x \in B_R(0)} ||f(x)||_2}{R}  + \lambda R^{-2 \beta} + 1 - \lambda(a-2) \bigg)\frac{\eps}{a} ||x||_2^{2 \beta}\\
	& \le 0.
	\end{align*}
	
	We now show Eqn.~\eqref{p 33} holds. From Eqn.~\eqref{J close to p} we have that $\tilde{P}(x) > J(x) - \frac{\eps}{a R^{2 \beta}}||x||_2^{2 \beta}$ for all $x \in (B_R(0))^{cl}$. Moreover, Eqn.~\eqref{p32} we have that $J(x)=1$ for all $x \in \partial B_R(0)$. Therefore $P(x)= \tilde{P}(x) + \frac{a-2}{a}\eps > 1 + \frac{a-2}{a}\eps - \frac{\eps}{a R^{2 \beta}}||x||_2^{2 \beta}>1 + \frac{a-3}{a}\eps>1 $. Also from Eqn.~\eqref{J close to p} we have that $\tilde{P}(0)=J(0)$. From Eqn.~\eqref{p32} we have that $J(0)=0$. Therefore $P(0)=\tilde{P}(0) + \frac{a-2}{a} \eps>0$.
\end{proof}
\section{Appendix B: sublevel set approximation} \label{sec: appendix 2}
This appendix is concerned with the volume metric ($D_V$ in Eqn.~\eqref{eqn: volume metric}). The sublevel approximation results presented in this appendix are required in the proof of Theorem~\ref{thm: SOS converges to ROA in V}. 
\begin{defn} \label{def:metric}
	$D: X \times X \to \R$ is a \textit{metric} if the following is satisfied for all $x,y \in X$,
	\vspace{-0.4cm}
	\setlength{\columnsep}{-0.75in}
	\begin{multicols}{2}
		\begin{itemize}
			\item $D(x,y) \ge 0$,
			\item $D(x,y)=0$ iff $x=y$,
			\item $D(x,y)=D(y,x)$,
			\item $D(x,z) \le D(x,y) + D(y,z)$.
		\end{itemize}
	\end{multicols}
\end{defn}
\begin{lem}[\cite{jones2019using}] \label{lem: Dv is metric}
	Consider the quotient space,
	{	\[
		X:= \mcl B \pmod {\{X \subset \R^n : X \ne \emptyset, \mu(X) =0 \}},
		\] } recalling $\mcl B:= \{B \subset \R^n: \mu(B)<\infty\}$ is the set of all bounded sets. Then $D_V: X \times X \to \R$, defined in Eqn.~\eqref{eqn: volume metric}, is a metric.
\end{lem}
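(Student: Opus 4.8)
The plan is to recognize $D_V(A,B)=\mu(A \triangle B)$ as the Lebesgue measure of the symmetric difference $A \triangle B := (A/B) \cup (B/A)$, and then verify the four axioms of Definition~\ref{def:metric} on the quotient space $X$. The key structural observation is that $(\mcl B, \triangle)$ is a vector space over the field $\mbb F_2$: symmetric difference is the addition, $\emptyset$ is the zero element, every set is its own additive inverse, and therefore the formal difference $A-B$ appearing in the equivalence relation $\sim_M$ coincides with $A \triangle B$. The collection $M$ of null (measure-zero) sets is a subspace, since $\mu(X \triangle Y) \le \mu(X)+\mu(Y)=0$ whenever $\mu(X)=\mu(Y)=0$. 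Consequently $A \sim_M B$ holds precisely when $\mu(A \triangle B)=0$, i.e. when $A$ and $B$ represent the same point of $X$.

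With this identification in hand, three of the four axioms are immediate. Finiteness and $\mbb R$-valuedness hold because $A,B \in \mcl B$ gives $A \triangle B \subseteq A \cup B$, whence $\mu(A \triangle B) \le \mu(A)+\mu(B)<\infty$. Non-negativity, $D_V(A,B) \ge 0$, is inherited directly from the non-negativity of $\mu$. Symmetry, $D_V(A,B)=D_V(B,A)$, follows from $A \triangle B = B \triangle A$. For the identity of indiscernibles I would use the observation above: $D_V(A,B)=0 \iff \mu(A \triangle B)=0 \iff A \sim_M B$, which is exactly the statement that $A$ and $B$ are equal as equivalence classes in $X$.

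The only substantive step is the triangle inequality, $D_V(A,C) \le D_V(A,B)+D_V(B,C)$. I would reduce this to the purely set-theoretic inclusion
\[
A \triangle C \subseteq (A \triangle B) \cup (B \triangle C).
\]
This can be proved either by a short case analysis — if $x \in A \triangle C$ then $x$ lies in exactly one of $A,C$, and comparing its membership in $B$ places $x$ in $A \triangle B$ or $B \triangle C$ — or more slickly by the algebraic identity $A \triangle C = (A \triangle B) \triangle (B \triangle C)$ in the $\mbb F_2$ vector space together with the trivial bound $U \triangle V \subseteq U \cup V$. Once the inclusion is established, monotonicity and finite subadditivity of $\mu$ give
\[
\mu(A \triangle C) \le \mu\big((A \triangle B) \cup (B \triangle C)\big) \le \mu(A \triangle B) + \mu(B \triangle C),
\]
which is the desired inequality.

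Finally, I would confirm that $D_V$ is well-defined on the quotient, i.e. independent of the chosen representatives: if $A \sim_M A'$ and $B \sim_M B'$, then two applications of the just-proved triangle inequality give $|D_V(A,B)-D_V(A',B')| \le D_V(A,A')+D_V(B,B')=0$. The main obstacle is thus concentrated entirely in the symmetric-difference inclusion underlying the triangle inequality; everything else is a direct consequence of the basic properties of Lebesgue measure and of the $\mbb F_2$-vector-space structure that the quotient construction implicitly relies on.
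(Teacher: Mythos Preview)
The paper does not supply its own proof of this lemma; it is stated with a citation to an external reference and left unproved in the text. Your argument is correct and complete: identifying $D_V(A,B)=\mu(A\triangle B)$, deducing the triangle inequality from the inclusion $A\triangle C\subseteq (A\triangle B)\cup(B\triangle C)$ together with subadditivity of $\mu$, and then checking well-definedness on the quotient is the standard route. The $\mbb F_2$-vector-space framing is a nice touch, since it is what gives meaning to the paper's notation ``$X\pmod M$'' and to the phrase ``$x-y\in M$'' in the definition of $\sim_M$; note, incidentally, that the paper's exclusion of $\emptyset$ from $M$ is a harmless slip---your implicit inclusion of $\emptyset$ among the null sets is what is actually needed for $M$ to be a subspace and for $\sim_M$ to be reflexive.
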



\begin{lem}[\cite{jones2019using}] \label{lem: D_V is related to vol}
	If $A,B \in \mcl B$ and  $B \subseteq A$ then
	\begin{align*}
	D_V(A,B)& =\mu(A/B)= \mu(A)- \mu (B).
	\end{align*}
\end{lem}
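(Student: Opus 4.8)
The plan is to exploit the hypothesis $B \subseteq A$ to collapse the symmetric-difference definition of the volume metric to a single set difference, and then to invoke finite additivity of the Lebesgue measure. First I would observe that $B \subseteq A$ forces $B/A = \{x \in B : x \notin A\} = \emptyset$, since by assumption every point of $B$ already lies in $A$. Consequently the union appearing in the definition of $D_V$ (Eqn.~\eqref{eqn: volume metric}) reduces to $(A/B) \cup (B/A) = (A/B) \cup \emptyset = A/B$, and hence $D_V(A,B) = \mu(A/B)$, which establishes the first claimed equality.

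For the second equality I would decompose $A$ as a disjoint union. Since every point of $A$ either belongs to $B$ or does not, and the points of the latter type are exactly those of $A/B$, we have $A = B \cup (A/B)$ with $B \cap (A/B) = \emptyset$. Both $B$ and $A/B$ are measurable (as $A,B \in \mcl B$, on which $\mu$ is defined through the integrability of the indicator $\mathds{1}_B$), so additivity of $\mu$ on disjoint measurable sets yields $\mu(A) = \mu(B) + \mu(A/B)$. Because $B \in \mcl B$ guarantees $\mu(B) < \infty$, I may subtract $\mu(B)$ from both sides to conclude $\mu(A/B) = \mu(A) - \mu(B)$, which together with the first step gives the full chain $D_V(A,B) = \mu(A/B) = \mu(A) - \mu(B)$.

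There is essentially no hard step here: the argument is a direct application of the set-difference definitions together with finite additivity of the Lebesgue measure. The only point deserving a moment's care is the legitimacy of subtracting $\mu(B)$, which is justified precisely because the hypotheses $B \subseteq A$ and $A,B \in \mcl B$ force $\mu(B) \le \mu(A) < \infty$, so that no $\infty - \infty$ indeterminacy can arise in the rearrangement.
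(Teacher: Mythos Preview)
Your argument is correct and is the natural direct proof: collapse the symmetric difference using $B\subseteq A$, then apply finite additivity of $\mu$ to the disjoint decomposition $A=B\cup(A/B)$, with the finiteness hypothesis $A,B\in\mcl B$ justifying the subtraction. The paper itself does not supply a proof of this lemma; it is stated with a citation to~\cite{jones2019using}, so there is no in-paper argument to compare against, but for a statement this elementary your approach is exactly what one would expect.
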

\begin{prop}[\cite{jones2020polynomial}] \label{prop: close in L1 implies close in V norm}
	Consider a set $\Lambda \in \mcl B$, a function $V \in L^1(\Lambda, \R)$, and a family of functions $\{J_d \in L^1(\Lambda, \R): d \in \N\}$ that satisfies the following properties:
	\begin{enumerate}
		\item For any $ d \in \N$ we have $J_d(x) \le V(x)$ for all $x \in \Lambda$.
		\item $\lim_{d \to \infty} ||V -J_d||_{L^1(\Lambda, \R)} =0$.
	\end{enumerate}
	Then for all $\gamma \in \R$
	\begin{align} \label{sublevel sets close}
	\lim_{d \to \infty}	D_V \bigg(\{x \in \Lambda : V(x) \le \gamma\}, \{x \in \Lambda : J_d(x) \le \gamma\} \bigg) =0.
	\end{align}
\end{prop}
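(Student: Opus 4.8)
The plan is to exploit the one-sided domination $J_d \le V$ to collapse the symmetric difference hidden in $D_V$ into an ordinary set difference, and then to control the measure of that difference by a two-scale (boundary-layer) splitting argument. First I would note that since $J_d(x) \le V(x)$ for every $x \in \Lambda$, any point satisfying $V(x) \le \gamma$ automatically satisfies $J_d(x) \le \gamma$; hence $\{x \in \Lambda : V(x) \le \gamma\} \subseteq \{x \in \Lambda : J_d(x) \le \gamma\}$ for every $d \in \N$. Writing $A := \{x \in \Lambda : V(x) \le \gamma\}$ and $B_d := \{x \in \Lambda : J_d(x) \le \gamma\}$, both sets lie in $\mcl B$ (being subsets of $\Lambda \in \mcl B$), so by the symmetry of $D_V$ and Lemma~\ref{lem: D_V is related to vol} applied to the inclusion $A \subseteq B_d$,
\begin{align*}
D_V(A, B_d) = D_V(B_d,A) = \mu(B_d / A) = \mu(S_d), \quad \text{where } S_d := \{x \in \Lambda : J_d(x) \le \gamma < V(x)\}.
\end{align*}
Thus the claim reduces to showing $\lim_{d \to \infty} \mu(S_d) = 0$.

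The difficulty — and the main obstacle — is that on $S_d$ the gap $V(x) - J_d(x)$ need not be bounded away from zero: points where $V(x)$ lies just barely above $\gamma$ belong to $S_d$ even when $J_d$ is uniformly close to $V$, so $L^1$ convergence alone cannot kill them. To isolate this boundary layer I would fix $\delta > 0$ and split $S_d = S_d^1 \cup S_d^2$, where $S_d^1 := \{x \in S_d : V(x) > \gamma + \delta\}$ and $S_d^2 := \{x \in S_d : \gamma < V(x) \le \gamma + \delta\}$. On $S_d^1$ one has $V(x) - J_d(x) > V(x) - \gamma > \delta$, and since $V - J_d \ge 0$ throughout $\Lambda$ (so that $V - J_d = |V - J_d|$), a Markov-type estimate gives
\begin{align*}
\mu(S_d^1) \le \frac{1}{\delta} \int_{S_d^1} (V(x) - J_d(x))\, dx \le \frac{1}{\delta} ||V - J_d||_{L^1(\Lambda, \R)},
\end{align*}
which tends to $0$ as $d \to \infty$ by hypothesis~(2). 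The second piece is handled by the $d$-independent bound $S_d^2 \subseteq T_\delta := \{x \in \Lambda : \gamma < V(x) \le \gamma + \delta\}$.

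Combining, $\mu(S_d) \le \mu(S_d^1) + \mu(T_\delta)$, so taking $\limsup_{d \to \infty}$ yields $\limsup_{d \to \infty} \mu(S_d) \le \mu(T_\delta)$ for every $\delta > 0$. The final step sends $\delta \downarrow 0$: the sets $T_\delta$ decrease, and $\bigcap_{\delta > 0} T_\delta = \{x \in \Lambda : \gamma < V(x) \le \gamma\} = \emptyset$, since no point can satisfy both $V(x) > \gamma$ and $V(x) \le \gamma$. Because $\mu(T_1) \le \mu(\Lambda) < \infty$ — here the hypothesis $\Lambda \in \mcl B$ is essential — continuity of measure from above gives $\lim_{\delta \to 0} \mu(T_\delta) = 0$, and hence $\limsup_{d \to \infty} \mu(S_d) = 0$, which is the desired conclusion~\eqref{sublevel sets close}. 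I emphasize that the resolution of the boundary-layer term $S_d^2$ rests entirely on the finiteness of $\mu(\Lambda)$ and on the fact that the exact level set $\{x \in \Lambda : V(x) = \gamma\}$ is excluded from $T_\delta$, so that no mass can accumulate on it in the limit regardless of whether that level set itself has positive measure.
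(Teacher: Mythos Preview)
Your proof is correct. The reduction via Lemma~\ref{lem: D_V is related to vol} to controlling $\mu(S_d)$ with $S_d=\{x\in\Lambda: J_d(x)\le\gamma<V(x)\}$, followed by the two-scale split into the Markov-controlled piece $S_d^1$ and the boundary layer $S_d^2\subseteq T_\delta$, is sound; the final use of continuity of measure from above on the decreasing family $T_\delta$ (with $\mu(T_1)\le\mu(\Lambda)<\infty$) is exactly what closes the argument.

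As for comparison with the paper: the paper does not prove this proposition at all---it is imported verbatim from the cited reference~\cite{jones2020polynomial}, so there is no in-paper argument to compare against. Your self-contained proof is therefore a genuine addition rather than a paraphrase.
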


\begin{cor} \label{cor: close in L1 implies close in V norm}
Consider a set $\Lambda \in \mcl B$, a function $V \in L^1(\Lambda, \R)$, and a family of functions $\{J_d \in L^1(\Lambda, \R): d \in \N\}$ that satisfies the following properties:
\begin{enumerate}
	\item For any $ d \in \N$ we have $J_d(x) \ge V(x)$ for all $x \in \Lambda$.
	\item $\lim_{d \to \infty} ||V -J_d||_{L^1(\Lambda, \R)} =0$.
\end{enumerate}
Then for all $\gamma \in \R$
\begin{align} \label{strict sublevel sets close}
\lim_{d \to \infty}	D_V \bigg(\{x \in \Lambda : V(x) < \gamma\}, \{x \in \Lambda : J_d(x) < \gamma\} \bigg) =0.
\end{align}
\end{cor}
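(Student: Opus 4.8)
The plan is to reduce this Corollary to Proposition~\ref{prop: close in L1 implies close in V norm} by a sign change. The two differences between the Corollary and the Proposition are (i) the inequality hypothesis is $J_d \ge V$ rather than $J_d \le V$, and (ii) the conclusion concerns strict sublevel sets $\{V < \gamma\}$ rather than the non-strict sublevel sets $\{V \le \gamma\}$ handled by the Proposition. Negating the functions flips the inequality into the required form, and a complementation identity for the volume metric converts the resulting non-strict superlevel sets into strict sublevel sets.

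First I would set $\tilde V := -V$ and $\tilde J_d := -J_d$ for each $d \in \N$. Then Hypothesis~1 of the Corollary, $J_d(x) \ge V(x)$, becomes $\tilde J_d(x) \le \tilde V(x)$ for all $x \in \Lambda$, which is Hypothesis~1 of the Proposition. Since $|\tilde V(x) - \tilde J_d(x)| = |V(x) - J_d(x)|$ pointwise, we have $||\tilde V - \tilde J_d||_{L^1(\Lambda, \R)} = ||V - J_d||_{L^1(\Lambda, \R)} \to 0$, so Hypothesis~2 is preserved. Fixing $\gamma \in \R$ and applying Proposition~\ref{prop: close in L1 implies close in V norm} to $\tilde V, \{\tilde J_d\}$ with threshold $-\gamma$ yields
\begin{align*}
\lim_{d \to \infty} D_V\big(\{x \in \Lambda : \tilde V(x) \le -\gamma\}, \{x \in \Lambda : \tilde J_d(x) \le -\gamma\}\big) = 0.
\end{align*}
Because $\tilde V(x) \le -\gamma$ is equivalent to $V(x) \ge \gamma$, the two sets here are $A := \{x \in \Lambda : V(x) \ge \gamma\}$ and $B_d := \{x \in \Lambda : J_d(x) \ge \gamma\}$, so $D_V(A, B_d) \to 0$.

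The final step passes from these superlevel sets to the desired strict sublevel sets via complementation inside $\Lambda$. Note $\{x \in \Lambda : V(x) < \gamma\} = \Lambda / A$ and $\{x \in \Lambda : J_d(x) < \gamma\} = \Lambda / B_d$. The key elementary fact is that $D_V$ is invariant under complementation within a fixed ambient set: for any $A, B \subseteq \Lambda$ one checks by direct set algebra that $(\Lambda / A)/(\Lambda / B) = B/A$ and $(\Lambda / B)/(\Lambda / A) = A/B$, hence
\begin{align*}
D_V(\Lambda / A, \Lambda / B) = \mu\big((B/A) \cup (A/B)\big) = D_V(A, B).
\end{align*}
Applying this with $B = B_d$ gives $D_V(\{x \in \Lambda : V(x) < \gamma\}, \{x \in \Lambda : J_d(x) < \gamma\}) = D_V(A, B_d) \to 0$, which is exactly Eqn.~\eqref{strict sublevel sets close}.

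I expect the only delicate point to be the bookkeeping in this last step: verifying that after negation the non-strict sublevel sets of $\tilde V$ delivered by the Proposition are precisely the non-strict superlevel sets of $V$, whose complements in $\Lambda$ are the strict sublevel sets of $V$, and confirming that the symmetric-difference measure is genuinely complement-invariant. Both reduce to elementary set manipulations (using that $A, B_d \subseteq \Lambda$ and that all sets involved lie in $\mcl B$ since $\Lambda \in \mcl B$), so no real obstacle remains once the negation substitution is in place.
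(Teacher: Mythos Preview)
Your proposal is correct and follows essentially the same approach as the paper: negate to $\tilde V=-V$, $\tilde J_d=-J_d$, apply Proposition~\ref{prop: close in L1 implies close in V norm}, and then pass to strict sublevel sets by complementation in $\Lambda$. Your bookkeeping is in fact slightly cleaner than the paper's: you apply the Proposition at threshold $-\gamma$ (so that $\{\tilde V\le -\gamma\}=\{V\ge\gamma\}$ exactly), and you justify the complementation step via the direct symmetric-difference identity $D_V(\Lambda/A,\Lambda/B)=D_V(A,B)$, whereas the paper routes this through Lemma~\ref{lem: D_V is related to vol}.
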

\begin{proof}
Consider some $\gamma \in \R$. Let us denote $\tilde{V}(x)=-V(x)$, $\tilde{J}_d(x)= - J_d(x)$ and $\gamma=-\tilde{\gamma}$. It follows that $\tilde{J}_d(x) \le \tilde{V}(x)$ for all $x \in \Lambda$ and $\lim_{d \to \infty} ||\tilde{V} -\tilde{J}_d||_{L^1(\Lambda, \R)} =0$. Therefore, by Prop.~\ref{prop: close in L1 implies close in V norm} we have that
\begin{align} \label{pp1}
\lim_{d \to \infty}	D_V \bigg(\{x \in \Lambda : \tilde{V}(x) \le \tilde{\gamma} \}, \{x \in \Lambda : \tilde{J}_d(x) \le \tilde{\gamma} \} \bigg) =0.
\end{align}

Now, $\Lambda=\{x \in \Lambda : V(x) < \gamma\} \cup \{x \in \Lambda : V(x) \ge \gamma\} = \{x \in \Lambda : V(x) < \gamma\} \cup \{x \in \Lambda : \tilde{V}(x) \le \tilde{\gamma} \}$. Therefore
\begin{align*}
\{x \in \Lambda : V(x) < \gamma\} = \Lambda / \{x \in \Lambda : \tilde{V}(x) \le \tilde{\gamma} \},
\end{align*}
and by a similar argument
\begin{align*}
\{x \in \Lambda : J_d(x) < \gamma\} = \Lambda / \{x \in \Lambda : \tilde{J}_d(x) \le \tilde{\gamma} \}.
\end{align*}
Thus, by Lem.~\ref{lem: D_V is related to vol} and since $\{x \in \Lambda : \tilde{J}_d(x) \le \tilde{\gamma} \} \subseteq \Lambda$, we have that
\begin{align} \label{pp2}
& D_V \bigg(\{x \in \Lambda : V(x) < \gamma\}, \{x \in \Lambda : J_d(x) < \gamma\} \bigg)\\ \nonumber
& = D_V \bigg( \Lambda/ \{x \in \Lambda : \tilde{V}(x) < \tilde{\gamma} \}, \Lambda/ \{x \in \Lambda : \tilde{J}_d(x) < \tilde{\gamma} \} \bigg) \\ \nonumber
& = D_V \bigg(\{x \in \Lambda : \tilde{V}(x) \le \tilde{\gamma} \}, \{x \in \Lambda : \tilde{J}_d(x) \le \tilde{\gamma} \} \bigg).
\end{align}
Now by Eqns~\eqref{pp1} and~\eqref{pp2} it follows that Eqn.~\eqref{strict sublevel sets close} holds.
\end{proof}

\section{Appendix C} \label{sec: appendix}
In this appendix we present several miscellaneous results required in various places throughout the paper and not previously found in any of the other appendices.
\begin{lem}[Exponential inequalities] \label{lem: exp ineq}
	The following inequalities hold
	\begin{align} \label{ineq:exp 1}
	\exp(-x) \le 1 \text{ for all } x \ge 0\\  \label{ineq:exp 2}
	x \exp(-x) \le 1 \text{ for all } x \in \R. \\ \label{ineq:exp 3}
	\exp(x) \ge 1 + x \text{ for all } x \in \R.
	\end{align}
\end{lem}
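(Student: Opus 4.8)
The plan is to establish the three inequalities using elementary calculus, proving the tangent-line bound \eqref{ineq:exp 3} first and then deriving the other two from it together with the monotonicity of $\exp$. The whole lemma is a collection of standard facts about the exponential function, so the argument is short; I would simply organize it so that the two less obvious bounds reduce to the single convexity estimate.

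First I would prove \eqref{ineq:exp 3}. Define $h(x) := \exp(x) - 1 - x$. Then $h(0) = 0$ and $h'(x) = \exp(x) - 1$, which is negative for $x < 0$, zero at $x = 0$, and positive for $x > 0$. Hence $x = 0$ is the global minimizer of $h$, so $h(x) \ge h(0) = 0$ for all $x \in \R$, which is exactly $\exp(x) \ge 1 + x$. Next, \eqref{ineq:exp 1} follows immediately from monotonicity: since $x \ge 0$ implies $-x \le 0$ and $\exp$ is increasing with $\exp(0) = 1$, we have $\exp(-x) \le \exp(0) = 1$.

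Finally, for \eqref{ineq:exp 2} I would reduce to \eqref{ineq:exp 3} by a shift. Applying \eqref{ineq:exp 3} at the point $x - 1$ gives $\exp(x-1) \ge 1 + (x-1) = x$, that is $x \le \exp(x)\exp(-1)$. Multiplying through by the positive quantity $\exp(-x)$ yields $x\exp(-x) \le \exp(-1) \le 1$, as required.

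Since these are all classical, there is no genuine obstacle; the only mild subtlety is \eqref{ineq:exp 2}, for which the cleanest route is the reduction to \eqref{ineq:exp 3} above rather than a direct maximization of $x\exp(-x)$. If one preferred a self-contained argument, one would instead split into the case $x \le 0$, where $x\exp(-x) \le 0 \le 1$ because $\exp(-x) > 0$, and the case $x > 0$, where maximizing $g(x) = x\exp(-x)$ via $g'(x) = (1-x)\exp(-x)$ locates the maximum at $x = 1$ with value $g(1) = \exp(-1) \le 1$.
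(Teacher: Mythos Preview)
Your proof is correct. The paper states this lemma without proof, treating the three inequalities as standard facts about the exponential function, so there is no argument to compare against; your short derivation via the convexity bound \eqref{ineq:exp 3} and the reduction of \eqref{ineq:exp 2} to it is a perfectly clean way to fill the gap.
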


\begin{lem}[Gronwall's Inequality \cite{Hirch_2004}] \label{lem: gronwall}
	Consider scalars $a,b \in \R$ and functions $u, \beta \in C^1(I,\R)$. Suppose
	\begin{align*}
	\frac{d}{dt}u(t) \le \beta(t) u(t) \text{ for all } t \in (a,b).
	\end{align*}
	Then it follows that
	\begin{align*}
	u(t) \le u(a) \exp\left(\int_a^t \beta(s) ds\right) \text{ for all } t \in [a,b].
	\end{align*}
\end{lem}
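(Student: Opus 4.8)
The plan is to convert the differential inequality into a statement about the monotonicity of a single auxiliary function by multiplying through by a suitable positive integrating factor. Concretely, I would define the integrating factor $\mu(t) := \exp\left(-\int_a^t \beta(s)\,ds\right)$, which is well defined and strictly positive for every $t \in [a,b]$ since $\beta \in C^1(I,\R)$ is continuous, hence integrable on $[a,b]$, and the exponential is positive. I would then study the product $v(t) := u(t)\mu(t)$, with the aim of showing that $v$ is non-increasing on $[a,b]$; the desired bound is then exactly the inequality $v(t) \le v(a)$ rewritten in terms of $u$.

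First I would differentiate $v$. Since $u \in C^1(I,\R)$ and, by the fundamental theorem of calculus, $\mu$ is differentiable with $\mu'(t) = -\beta(t)\mu(t)$, the product rule gives
\begin{align*}
v'(t) = u'(t)\mu(t) + u(t)\mu'(t) = \mu(t)\bigl(u'(t) - \beta(t)u(t)\bigr) \text{ for all } t \in (a,b).
\end{align*}
The hypothesis $\frac{d}{dt}u(t) \le \beta(t)u(t)$ says precisely that $u'(t) - \beta(t)u(t) \le 0$, and because $\mu(t) > 0$ this yields $v'(t) \le 0$ on $(a,b)$. By the mean value theorem (equivalently, by integrating the derivative), a function that is continuous on $[a,b]$ with non-positive derivative on $(a,b)$ is non-increasing, so $v(t) \le v(a)$ for all $t \in [a,b]$.

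Finally I would unwind the definitions. Note $v(a) = u(a)\exp\left(-\int_a^a \beta(s)\,ds\right) = u(a)$, so the inequality $v(t) \le v(a)$ reads $u(t)\exp\left(-\int_a^t \beta(s)\,ds\right) \le u(a)$; multiplying both sides by the positive quantity $\exp\left(\int_a^t \beta(s)\,ds\right)$ gives the claimed bound $u(t) \le u(a)\exp\left(\int_a^t \beta(s)\,ds\right)$. There is no serious obstacle here: the only points requiring care are verifying that $\mu$ is genuinely differentiable with the stated derivative (which needs $\beta$ continuous, guaranteed by $\beta \in C^1$) and that the sign of the inequality is preserved when multiplying by $\mu(t) > 0$. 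The slickness of the argument rests entirely on choosing the integrating factor so that the cross term $u\mu'$ exactly cancels the $\beta u \mu$ contribution, collapsing the differential inequality into a monotonicity statement.
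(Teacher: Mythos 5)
Your proof is correct. The paper itself gives no proof of this lemma --- it is stated in Appendix~C with a citation to the reference \cite{Hirch_2004} --- and your integrating-factor argument is the standard textbook proof (essentially the one found in that reference): defining $v(t) = u(t)\exp\left(-\int_a^t \beta(s)\,ds\right)$, checking $v'(t) \le 0$ on $(a,b)$, and invoking continuity on $[a,b]$ to conclude $v(t) \le v(a)$ handles all the details, including the endpoints, cleanly. One cosmetic remark: you denote the integrating factor by $\mu$, which in this paper is already reserved for the Lebesgue measure and for the overshoot constant in the exponential stability bound~\eqref{eqn: exp stab}, so a different symbol would be advisable if this were spliced into the text.
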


\begin{thm}[Rademacher's Theorem \cite{maly1997fine} \cite{evans2010partial}] \label{thm: Rademacher theorem}
	If $\Omega \subset \R^n$ is an open subset and $V \in Lip(\Omega, \R)$, then $V$ is differentiable almost everywhere in $\Omega$ with point-wise derivative corresponding to the weak derivative almost everywhere; that is the set of points in $\Omega$ where $V$ is not differentiable has Lebesgue measure zero. Moreover,
	\vspace{-0.1cm}\begin{align*}
	\esssup_{x \in \Omega} \bigg|\frac{\partial}{\partial x_i}V(x) \bigg| \le L_V \text{ for all } 1 \le i \le n,
	\end{align*}
	where $L_V>0$ is the Lipschitz constant of $V$ and $\frac{\partial}{\partial x_i}V(x)$ is the weak derivative of $V$.
\end{thm}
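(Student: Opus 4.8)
The plan is to prove Rademacher's theorem by the classical three-step route: first establish that directional derivatives exist almost everywhere, then identify them with the inner product against a candidate gradient assembled from the weak partial derivatives, and finally upgrade this pointwise-in-direction statement to genuine differentiability using the Lipschitz bound together with a countable dense set of directions. For a fixed unit vector $\nu \in \R^n$ I would introduce the directional difference quotient and note that, since $V \in Lip(\Omega, \R)$, for each fixed $x$ the one-variable map $t \mapsto V(x+t\nu)$ is Lipschitz, hence absolutely continuous, hence differentiable for almost every $t$. A Fubini argument along the lines parallel to $\nu$ then shows that the directional derivative $D_\nu V(x) := \lim_{t \to 0} \frac{1}{t}(V(x+t\nu) - V(x))$ exists for almost every $x \in \Omega$. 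Taking $\nu = e_i$ in particular gives that the partial derivatives exist almost everywhere, and I denote the candidate gradient by $\nabla V(x) := (\frac{\partial}{\partial x_1} V(x), \dots, \frac{\partial}{\partial x_n} V(x))$.

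Next I would show $D_\nu V(x) = \nabla V(x)^T \nu$ for almost every $x$. Integrating the difference quotient against a test function $\zeta \in C^\infty(\R^n,\R)$ with compact support in $\Omega$, changing variables, and passing to the limit (dominated convergence is justified by the uniform Lipschitz bound on the quotient) yields the identity $\int_\Omega D_\nu V(x)\,\zeta(x)\,dx = -\int_\Omega V(x)\,\nabla \zeta(x)^T\nu\,dx = \sum_{i=1}^n \nu_i \int_\Omega \frac{\partial}{\partial x_i} V(x)\,\zeta(x)\,dx$, where the final equality is exactly the definition of the weak partial derivatives (Defn.~\ref{def: weak deriv}). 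Since $\zeta$ is arbitrary, $D_\nu V = \nabla V^T \nu$ almost everywhere.

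The main obstacle is the final upgrade to full differentiability, since pointwise existence of every directional derivative does not by itself imply Fr\'echet differentiability; this is precisely where the Lipschitz hypothesis (rather than mere continuity) is indispensable. I would fix a countable dense set of directions $\{\nu_k\}_{k \in \N}$ on the unit sphere and let $A \subseteq \Omega$ be the full-measure set on which $D_{\nu_k} V(x) = \nabla V(x)^T \nu_k$ holds for every $k$ simultaneously. Fixing $x \in A$ and setting $Q(\nu,t) := \frac{1}{t}(V(x+t\nu)-V(x)) - \nabla V(x)^T \nu$, the Lipschitz property makes $\nu \mapsto Q(\nu,t)$ Lipschitz with constant at most $L_V + ||\nabla V(x)||_2$, uniformly in $t$. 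Given $\eps>0$, covering the compact sphere by finitely many $\eps$-balls centred at some of the $\nu_k$ and using $Q(\nu_k,t) \to 0$ for those finitely many directions forces $\sup_{||\nu||_2=1} |Q(\nu,t)| \to 0$ as $t \to 0$; this uniformity over the sphere is exactly differentiability of $V$ at $x$ with derivative $\nabla V(x)$. As $A$ has full measure, $V$ is differentiable almost everywhere, and the pointwise derivative coincides with the weak derivative by the identity of the previous step. The ``moreover'' bound is then immediate: wherever $\frac{\partial}{\partial x_i} V(x)$ exists, the estimate $|V(x+te_i)-V(x)|/|t| \le L_V$ passes to the limit, giving $\esssup_{x \in \Omega} |\frac{\partial}{\partial x_i} V(x)| \le L_V$ for each $i$.
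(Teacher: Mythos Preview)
Your proof is correct and follows the classical route to Rademacher's theorem; in fact it is essentially the argument given in the reference~\cite{evans2010partial} that the paper cites. However, the paper itself does not prove this statement: Theorem~\ref{thm: Rademacher theorem} is simply quoted in Appendix~C as a known result from the literature, with citations to~\cite{maly1997fine} and~\cite{evans2010partial}, and no proof is supplied. So there is no ``paper's own proof'' to compare against---the authors treat Rademacher's theorem as a black-box tool used in Propositions~\ref{prop: ZLF is lip} and~\ref{prop: ZLF PDE}.

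Your three-step outline (a.e.\ existence of directional derivatives via Fubini and one-dimensional absolute continuity; identification of $D_\nu V$ with $\nabla V^T\nu$ through the weak-derivative duality of Defn.~\ref{def: weak deriv}; upgrade to Fr\'echet differentiability via a countable dense set of directions and the uniform Lipschitz bound on $\nu\mapsto Q(\nu,t)$) is exactly the standard argument and is sound. The final ``moreover'' bound is indeed immediate from $|V(x+te_i)-V(x)|\le L_V|t|$ on the full-measure set where the partials exist, which matches what the paper needs when it invokes $\esssup_{x\in\R^n}\|\nabla W_{\lambda,\beta}(x)\|_2\le K$ in the proof of Lemma~\ref{lem: mollification of ZLF}.
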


\begin{thm}[Putinar's Positivstellesatz \cite{Putinar_1993}] \label{thm: Psatz}
	Consider the semialgebriac set $X = \{x \in \R^n: g_i(x) \ge 0 \text{ for } i=1,...,k\}$. Further suppose $\{x  \in \R^n : g_i(x) \ge 0 \}$ is compact for some $i \in \{1,..,k\}$. If the polynomial $f: \R^n \to \R$ satisfies $f(x)>0$ for all $x \in X$, then there exists SOS polynomials $\{s_i\}_{i \in \{1,..,m\}} \subset \sum_{SOS}$ such that,
	\vspace{-0.4cm}\begin{equation*}
	f - \sum_{i=1}^m s_ig_i \in \sum_{SOS}.
	\end{equation*}
\end{thm}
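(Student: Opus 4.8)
The plan is to prove the equivalent statement that $f$ lies in the quadratic module
\[
M := \Big\{ \sigma_0 + \sum_{i=1}^k \sigma_i g_i : \sigma_0, \dots, \sigma_k \in \textstyle\sum_{SOS} \Big\},
\]
since a representation $f = \sigma_0 + \sum_{i=1}^k \sigma_i g_i$ is exactly the asserted conclusion $f - \sum_{i=1}^k s_i g_i = \sigma_0 \in \sum_{SOS}$ with $s_i = \sigma_i$. The set $M$ is a convex cone in the real vector space $\mcl P(\R^n, \R)$ containing $\sum_{SOS}$, and the entire difficulty is to certify membership in $M$ from the purely geometric hypothesis that $f > 0$ on $X$. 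The first preparatory step is to install the \emph{Archimedean} property of $M$. Since $\{x : g_j(x) \ge 0\}$ is compact for some $j$ and $X \subseteq \{x : g_j(x) \ge 0\}$, the set $X$ is bounded, so $X \subset B_{\sqrt{N}}(0)$ for some $N > 0$; adjoining the redundant inequality $N - \|x\|_2^2 \ge 0$ leaves $X$ unchanged and places $N - \|x\|_2^2 \in M$. A standard lemma then upgrades this to: for every $p \in \mcl P(\R^n, \R)$ there is $c > 0$ with $c - p \in M$, i.e. $1$ is an order unit for the preorder induced by $M$.

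Next I would argue by contradiction. Suppose $f \notin M$. Because $1$ is an order unit, $1$ is an algebraic interior point of $M$, so an Eidelheit/Hahn--Banach separation on $\mcl P(\R^n, \R)$ yields a nonzero linear functional $L : \mcl P(\R^n, \R) \to \R$ with $L(q) \ge 0$ for all $q \in M$ and $L(f) \le 0$. The Archimedean bound $|L(p)| \le c_p\, L(1)$ forces $L(1) > 0$, so after normalization $L(1) = 1$, and $L$ behaves like a \emph{state}: it is nonnegative on all squares and order-bounded in the unit norm.

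The technical heart is then to represent $L$ by a measure supported on $X$. Since $L$ is nonnegative on every square, Haviland's theorem (solvability of the multivariate moment problem) produces a positive Borel measure $\mu$ on $\R^n$ with $L(p) = \int p \, d\mu$ for all $p \in \mcl P(\R^n,\R)$. Nonnegativity of $L$ on each product $\sigma g_i$ with $\sigma \in \sum_{SOS}$ localizes the support: $\int \sigma g_i \, d\mu \ge 0$ for all squares $\sigma$ forces $\mathrm{supp}\,\mu \subseteq \{x : g_i(x) \ge 0\}$ for every $i$, whence $\mathrm{supp}\,\mu \subseteq X$, while $\mu(X) = L(1) = 1$. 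Finally, since $f$ is continuous and strictly positive on the compact set $X$, it attains a minimum $f_* := \min_{x \in X} f(x) > 0$, so
\[
L(f) = \int_X f \, d\mu \ge f_* \, \mu(X) = f_* > 0,
\]
contradicting $L(f) \le 0$. Therefore $f \in M$, which is the claimed representation.

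I expect the measure-representation step to be the main obstacle: the separation argument delivers only an abstract positive functional, and converting it into genuine integration against a measure that is moreover \emph{supported on $X$} is exactly where compactness/Archimedeanness is indispensable, and is the core of Putinar's original contribution. An alternative route sidesteps the moment problem by first establishing Schmüdgen's Positivstellensatz (using the full multiplicative family of products $\prod_{i \in S} g_i$) and then invoking the Archimedean condition to collapse those products back into the linear module $M$; but that reduction is itself the crux and is no shorter than the functional-analytic argument above.
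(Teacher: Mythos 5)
The paper does not prove this statement---it is quoted from the literature (\cite{Putinar_1993}) and used as a black box in the proof of Theorem~\ref{thm: SOS converges to ROA in V}---so your sketch can only be judged on its own merits, and it reproduces the right skeleton (Archimedean order unit, Eidelheit separation, representing measure, support localization, strict positivity on a compact set) but has two genuine gaps at exactly the two load-bearing steps. First, the Archimedean bootstrap is invalid as written: adjoining the ``redundant'' inequality $N-\norm{x}_2^2\ge 0$ leaves the \emph{set} $X$ unchanged but enlarges the \emph{quadratic module}; membership of $N-\norm{x}_2^2$ in $M(g_1,\dots,g_k,N-\norm{x}_2^2)$ is trivial, while the theorem demands a representation using only the original $g_i$. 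If you run your argument in the enlarged module you prove the weaker statement in which the certificate carries an extra term $s(x)(N-\norm{x}_2^2)$. Bridging this gap is precisely Putinar's criterion lemma: compactness of $\{x:g_j(x)\ge 0\}$ for a single generator yields $N-\norm{x}_2^2=\sigma_0+\sigma_1 g_j$ with $\sigma_0,\sigma_1\in\sum_{SOS}$ by Schm\"udgen's Positivstellensatz applied to the one-generator preordering (which coincides with $\sum_{SOS}+\sum_{SOS}g_j\subseteq M$); this is nontrivial and cannot be replaced by relabeling a constraint as redundant.

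Second, the invocation of Haviland's theorem fails: Haviland requires $L$ to be nonnegative on \emph{all} polynomials nonnegative on the prospective support set, not merely on squares, and in $n\ge 2$ variables there exist positive semidefinite linear functionals with no representing measure (counterexamples of Berg--Christensen--Jensen and Schm\"udgen, built from nonnegative non-SOS forms such as Motzkin's). Nonnegativity of $L$ on $\mathrm{Pos}(X)$ is unavailable here---it is essentially dual to the statement being proved---so the step is circular as written. The correct repair, and the actual core of Putinar's argument, is operator-theoretic: the GNS construction from $L$ restricted to squares gives a pre-Hilbert space on which the coordinate multiplication operators are symmetric and, by the Archimedean bound ($L\bigl(p^2(N-\norm{x}_2^2)\bigr)\ge 0$, hence $\norm{M_{x_i}}\le\sqrt{N}$), bounded; the spectral theorem for commuting bounded self-adjoint operators then produces a representing measure supported on the joint spectrum inside $(B_{\sqrt{N}}(0))^{cl}$, after which your support-localization via $\int \sigma g_i\,d\mu\ge 0$ and the final contradiction $L(f)\ge f_*\mu(X)>0$ go through verbatim. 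With these two substitutions your outline becomes the standard proof; without them, both the passage to Archimedeanness and the passage from functional to measure are unsupported.
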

\begin{thm}[The Bolzano Weierstrass Theorem] \label{thm: Bolzano}
	Consider a sequence $\{x_n\}_{n \in \N} \subset \R^n$. Then the $\{x_n\}_{n \in \N}$ is a bounded sequence, that is there exists $M>0$ such that $x_n < M$ for all $n \in \N$, if and only if there exists a convergent subsequence $\{y_n\}_{n \in \N} \subset \{x_n\}_{n \in \N}$.
\end{thm}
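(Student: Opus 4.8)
The genuine content of this statement is the forward implication — that a bounded sequence in $\R^n$ admits a convergent subsequence — which is the classical Bolzano--Weierstrass theorem; the reverse implication is elementary, since a convergent subsequence is automatically bounded. The plan is therefore to concentrate on the forward direction, first settling the one-dimensional case and then lifting it to $\R^n$, either by induction on the dimension or by a direct nested-box argument.

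First I would establish the case $n=1$ through the monotone subsequence lemma. Call an index $m$ a \emph{peak} of $\{x_k\}_{k\in\N}$ if $x_m \ge x_k$ for every $k>m$. If there are infinitely many peaks, they index a non-increasing subsequence; if there are only finitely many, then beyond the last peak every term is eventually exceeded by a later term, which lets me build a strictly increasing subsequence. In either case I obtain a monotone subsequence, and since it is bounded, completeness of $\R$ (monotone convergence to the supremum or infimum of its range) guarantees that it converges.

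Next I would pass to $\R^n$ by iterating the one-dimensional result coordinate by coordinate. Given a bounded $\{x_k\}_{k\in\N}\subset\R^n$, its first coordinates form a bounded real sequence, so I extract a subsequence along which the first coordinate converges; within that subsequence the second coordinates remain bounded, so I extract a further subsequence along which the second coordinate converges, the first coordinate still converging because every subsequence of a convergent sequence shares its limit. Repeating $n$ times yields a single subsequence that converges in every coordinate, and since convergence in $(\R^n, \|\cdot\|_2)$ is equivalent to coordinatewise convergence, this subsequence converges in $\R^n$. As an alternative I would keep in reserve the nested-box argument: enclose the sequence in a cube $[-M,M]^n$, repeatedly bisect it into $2^n$ congruent subcubes while retaining at each stage one subcube that still contains $x_k$ for infinitely many indices $k$; the resulting nested cubes have diameters tending to $0$, their intersection is a single point $x^*$ by completeness, and choosing strictly increasing indices $k_1<k_2<\cdots$ with $x_{k_j}$ in the $j$-th cube produces a subsequence converging to $x^*$.

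The main obstacle — indeed the only place where genuine analysis enters — is the completeness of $\R$: in the one-dimensional step it is what forces the bounded monotone subsequence to converge, and in the nested-box variant it is what makes the intersection of the nested cubes nonempty. Everything else is bookkeeping: checking that repeated subsequencing preserves the convergence of the earlier coordinates and that the extracted index sequences are genuinely strictly increasing. I expect no difficulty beyond organizing this bookkeeping cleanly.
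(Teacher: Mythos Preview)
The paper does not prove this theorem at all; it is simply listed in Appendix~C as a standard result invoked elsewhere in the text. So there is no ``paper's own proof'' to compare against, and your argument for the forward implication (bounded $\Rightarrow$ convergent subsequence) is a perfectly standard and correct proof of the classical Bolzano--Weierstrass theorem.

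There is, however, a genuine gap in your treatment of the reverse implication. You write that it ``is elementary, since a convergent subsequence is automatically bounded,'' but this only shows that the \emph{subsequence} is bounded, not that the original sequence $\{x_n\}$ is. In fact the reverse implication, as literally stated, is false: the sequence $x_n = n$ for $n$ odd and $x_n = 0$ for $n$ even has the convergent subsequence $\{x_{2k}\}_{k\in\N}$, yet $\{x_n\}$ is unbounded. The ``if and only if'' in the paper's statement is therefore an error in the paper itself, and you should have flagged it rather than supplied an incorrect one-line justification. The only salvageable biconditional is ``$\{x_n\}$ is bounded if and only if \emph{every} subsequence has a convergent sub-subsequence,'' which is not what is written.
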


\begin{defn} \label{defn: open cover}
	Let $\Omega \subset \R^n$. We say $\{U_i \}_{i=1}^\infty$ is an open cover for $\Omega$ if $U_i \subset \R^n$ is an open set for each $i \in \N$ and $\Omega \subseteq \{U_i \}_{i=1}^\infty$.
\end{defn}

\begin{thm}[Existence of Partitions of Unity \cite{spivak2018calculus}] \label{thm: partition of unity}
	Let $U \subseteq \R^n$ and let $\{U_i\}_{i=1}^\infty$ be an {open cover} of $E$. Then there exists a collection of $C^\infty(\R^n, \R)$ functions, denoted by $\{\psi\}_{i=1}^\infty$, with the following properties:
	\begin{enumerate}
		\item For all $x \in U$ and $i \in \N$ we have $0 \le \psi_i(x) \le 1$.
		\item For all $x \in U$ there exists an open set $S \subseteq U$ containing $x$ such that all but finitely many $\psi_i$ are 0 on $S$.
		\item For each $x \in U$ we have $\sum_{i=1}^\infty \psi_i(x) =1$.
		\item For each $i \in \N$ we have $\{x \in U: \psi_i(x) \ne 0\} \subseteq U_i$.
	\end{enumerate}
\end{thm}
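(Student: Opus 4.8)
The plan is to reduce the statement to the construction of a single smooth ``bump'' and then to patch such bumps together in a locally finite way. The building block is already available in the paper: the standard mollifier $\eta$ of Eqn.~\eqref{fun: mollifier} belongs to $C^\infty(\R^n,\R)$, is strictly positive on $B_1(0)$, and vanishes identically off $B_1(0)$; hence for any $y \in \R^n$ and $r>0$ the scaled translate $x \mapsto \eta((x-y)/r)$ is a smooth function positive exactly on $B_r(y)$ and zero off $B_r(y)$. First I would replace $U$ by the open set $V := \bigcup_{i=1}^\infty U_i \supseteq U$; since the four properties are asked only on $U$, it suffices to build $\{\psi_i\}$ on $V$ and restrict. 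Being an open subset of $\R^n$, $V$ is $\sigma$-compact and locally compact, so I may fix an exhaustion by compact sets $K_1 \subseteq K_2 \subseteq \cdots$ with $K_m \subseteq (K_{m+1})^\circ$ and $\bigcup_m K_m = V$, setting $K_0 = K_{-1} := \emptyset$.

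Next I would manufacture a locally finite family of bumps. For each $m \in \N$ the ``shell'' $K_m \setminus (K_{m-1})^\circ$ is compact and contained in the open set $(K_{m+1})^\circ \setminus K_{m-2}$. Given any $x$ in this shell, I pick an index $i$ with $x \in U_i$ and a radius $r_x$ small enough that $B_{r_x}(x) \subseteq U_i \cap \big((K_{m+1})^\circ \setminus K_{m-2}\big)$, and attach the bump $\varphi_x$ above, recording the assigned index $i$. The balls $\{B_{r_x/2}(x)\}$ cover the compact shell, so finitely many suffice; collecting these finite families over all $m$ yields a countable family $\{\varphi_j\}_{j\in\N}$ with $\operatorname{supp}\varphi_j \subseteq U_{i(j)}$ for some index $i(j)$, whose positivity sets cover $V$. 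Local finiteness is the payoff of the shell construction: every $\varphi_j$ is supported inside some $(K_{m+1})^\circ \setminus K_{m-2}$, and any point of $V$ lies in some $(K_\ell)^\circ$, which meets only the finitely many shells whose index is within $2$ of $\ell$; hence only finitely many $\varphi_j$ are nonzero near each point.

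Finally I would normalize and regroup. By local finiteness the sum $\Phi := \sum_j \varphi_j$ is a well-defined $C^\infty$ function on $V$, and it is strictly positive because the positivity sets cover $V$. Then $\tilde\psi_j := \varphi_j/\Phi$ are smooth, satisfy $0 \le \tilde\psi_j \le 1$, have locally finite supports contained in $U_{i(j)}$, and sum to $1$. To match the index set of the cover (property~4), I set $\psi_i := \sum_{j:\,i(j)=i} \tilde\psi_j$ for each $i \in \N$; each such sum is a locally finite, hence smooth, function supported in $U_i$, valued in $[0,1]$, with $\sum_i \psi_i = \sum_j \tilde\psi_j = 1$ on $V \supseteq U$, so restricting to $U$ gives properties~1--4. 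I expect the main obstacle to be topological bookkeeping rather than any estimate: a countable subcover alone does not guarantee local finiteness (property~2), and it is precisely the exhaustion-by-compacts/shell argument that forces each point to meet only finitely many supports; the secondary technical point is the regrouping step, which is what makes the resulting partition indexed by, and subordinate to, the original cover $\{U_i\}_{i=1}^\infty$.
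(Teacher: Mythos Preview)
Your argument is a correct, standard construction of a smooth partition of unity subordinate to a given open cover: bump functions from the mollifier~\eqref{fun: mollifier}, an exhaustion of the open set $V=\bigcup_i U_i$ by compacta, the shell construction to force local finiteness, normalization by the locally finite sum $\Phi$, and regrouping by the assignment $j\mapsto i(j)$ to obtain subordination to the original indexed cover. The bookkeeping you flag is handled properly; in particular, after regrouping, property~2 survives because near any point only finitely many $\varphi_j$ are nonzero, hence only finitely many indices $i(j)$ occur, so only finitely many $\psi_i$ are nonzero there.

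As to comparison with the paper: the paper does not prove this theorem at all. It is stated in Appendix~C as a quoted result from~\cite{spivak2018calculus} and used as a black box in the proof of Proposition~\ref{prop: parition of unity ZLF}. Your proposal therefore supplies a self-contained proof where the paper simply cites one; the approach you give is essentially the textbook construction (and, in fact, the one in Spivak), so there is no methodological divergence to discuss.
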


\end{document}